\newtheorem{thm}{Theorem}[section]
\newtheorem{cor}[thm]{Corollary}
\newtheorem{lem}[thm]{Lemma}
\newtheorem{prop}[thm]{Proposition}
\theoremstyle{definition}
\newtheorem{defn}[thm]{Definition}
\theoremstyle{remark}
\newtheorem{rem}[thm]{Remark}
\numberwithin{equation}{section}
\newcommand{\R}{\mathbb R}
\newcommand{\be}{\begin{equation}}
\newcommand{\ee}{\end{equation}}
\newcommand{\ep}{\eps}
\newcommand{\eps}{\varepsilon}
\newcommand{\p}{\partial}
\newcommand{\comment}[1]{}
\begin{document}
\title[A two-phase fully nonlinear non-homogeneous problem]{ Free boundary regularity for  fully nonlinear non-homogeneous  two-phase
problems}
\author{Daniela De Silva}
\address{Department of Mathematics, Barnard College, Columbia University,
New York, NY 10027}
\email{\texttt{desilva@math.columbia.edu}}
\author{Fausto Ferrari}
\address{Dipartimento di Matematica dell' Universit\`a, Piazza di Porta S.
Donato, 5, 40126 Bologna, Italy.}
\email{\texttt{fausto.ferrari@unibo.it}}
\author{Sandro Salsa}
\address{Dipartimento di Matematica del Politecnico, Piazza Leonardo da
Vinci, 32, 20133 Milano, Italy.}
\email{\texttt{sandro.salsa@polimi.it }}

\thanks{ D.~D.~ and F.~ F.~  are supported by the ERC starting grant project 2011 EPSILON (Elliptic PDEs and Symmetry of Interfaces and Layers for Odd Nonlinearities). F.~F.~is supported by Miur  Grant (Prin): Equazioni di diffusione in ambiti sub-riemanniani e problemi geometrici associati.
S.~S.~ is supported by Miur Grant, Geometric Properties of Nonlinear Diffusion Problems.  F.~F.~\ wishes to thank the Department of Mathematics of Columbia University, New York, for the  kind hospitality.}

\begin{abstract}
We prove that flat or Lipschitz free boundaries of  two-phase free boundary
problems governed by fully nonlinear uniformly elliptic operators and with  non-zero right hand side are $C^{1,\gamma}$.
\end{abstract}

\maketitle

% ----------------------------------------------------------------
% ----------------------------------------------------------------

\section{Introduction and main results}

In this paper we continue the development of the regularity theory for free
boundary problems with forcing term, started in \cite{D} and \cite{DFS}. We
will focus on the following problem
\begin{equation}
\left\{
\begin{array}{ll}
\mathcal{F}(D^{2}u)=f, & \hbox{in $\Omega^+(u) \cup \Omega^-(u),$} \\
\  &  \\
(u_{\nu }^{+})^{2}-(u_{\nu }^{-})^{2}=1, &
\hbox{on $F(u):= \partial
\Omega^+(u) \cap \Omega.$} \\
&
\end{array}
\right.  \label{fb}
\end{equation}%
Here $\Omega \subset \mathbb{R}^{n}$ is a bounded domain and
\begin{equation*}
\Omega ^{+}(u):=\{x\in \Omega :u(x)>0\},\quad \Omega ^{-}(u):=\{x\in \Omega
:u(x)\leq 0\}^{\circ },
\end{equation*}%
while $u_{\nu }^{+}$ and $u_{\nu }^{-}$ denote the normal derivatives in the
inward direction to $\Omega ^{+}(u)$ and $\Omega ^{-}(u)$ respectively. Also, $f\in L^{\infty }(\Omega )$ is continuous in $\Omega^+(u) \cup \Omega^-(u)$. $F(u)$ is called the \emph{free boundary}.

 $\mathcal{F}$ is a fully nonlinear uniformly elliptic operator,
 that is there exist $0<\lambda\leq \Lambda$   positive constants such that for every  $M, N\in \mathcal{S}^{n\times n},$  with  $N\geq 0,$
$$
\lambda\| N\|\leq \mathcal{F}(M+N)-\mathcal{F}(M)\leq \Lambda \| N\|,
$$
where $\mathcal{S}^{n\times n}$ denotes the set of real  $n\times n$ symmetric matrices. We write $N \geq 0,$ whenever $N$ is non-negative definite. Also, $\|M\|$ denotes the $(L^2,L^2)$-norm of $M$, that is $\|M\|= \sup_{|x|=1} |Mx|$. Finally, we assume that  $\mathcal{F}(0)=0.$

%For a comprehensive treatment of fully nonlinear elliptic equations, we refer the reader to \cite{CC}.

When $f\equiv 0$ and $\mathcal{F}$ is homogeneous of degree one, several
authors extended the results of the seminal works of Caffarelli (\cite{C1,C2}%
) to various kind of nonlinear operators. Wang (\cite{W1,W2}) considered $%
\mathcal{F}=\mathcal{F}\left( D^{2}u\right) $ concave, Feldman (\cite{F1})
enlarged the class of operators to $\mathcal{F}=\mathcal{F}\left(
D^{2}u,Du\right) $ without concavity assumptions, Ferrari and Argiolas (\cite{Fe1,AF}) added $x$-dependence in $\mathcal{F}$, with $\mathcal{F}\left(
0,0,x\right) \equiv 0$.

All these papers follows the general strategy developed in \cite{C1,C2},
that however seems not so suitable when distributed
sources are present.

In \cite{D}, De Silva introduced a new technique to prove the smoothness of the free boundary for
 one-phase problems governed by non-homogeneous linear elliptic equations. As we show in
\cite{DFS}, her method is flexible enough to deal with general two-phase
problems for linear operators. Here we enforce the same technique to prove
regularity of flat free boundaries for problem (\ref{fb}). Our main result is
the following, where we denote with $B_r$ the ball of radius $r$ centered at 0 (for the definition of viscosity solution see Section \ref{section2}).

\begin{thm}[Flatness implies $C^{1,\protect\gamma }$]
\label{flatmain1} Let $u$ be a Lipschitz viscosity solution to %
\eqref{fb} in $B_{1}$. Assume that $f\in L^{\infty }(B_{1})$ is continuous {%
in $B_{1}^{+}(u)\cup B_{1}^{-}(u).$} There exists a constant $\bar{\delta}%
>0, $ depending only on $n,\lambda ,\Lambda ,\Vert f\Vert _{\infty }$ and $%
Lip(u) $ such that, if
\begin{equation}
\{x_{n}\leq -\delta \}\subset B_{1}\cap \{u^{+}(x)=0\}\subset \{x_{n}\leq
\delta \},  \label{flatness}
\end{equation}%
with $0\leq \delta \leq \bar{\delta},$ then $F(u)$ is $C^{1,\gamma }$ in $%
B_{1/2}$.
\end{thm}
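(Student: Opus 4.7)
My plan is to adapt the improvement-of-flatness scheme developed by De Silva in \cite{D} and extended in \cite{DFS} to the fully nonlinear framework of \eqref{fb}. The argument has three main ingredients: (i) a partial Harnack inequality at the free boundary, (ii) an improvement of flatness obtained by compactness and linearization, and (iii) a dyadic iteration yielding a $C^{1,\gamma}$ graph.

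\textbf{Setup and rescaling.} Problem \eqref{fb} is invariant under $u_r(x) = u(rx)/r$, which transforms $\mathcal{F}(D^2 u)=f$ into $\mathcal{F}(D^2 u_r)(x) = r\,f(rx)$ while preserving the free boundary condition. Thus zooming in shrinks the source in $L^\infty$, so $f$ can be absorbed into the flatness parameter $\eps$. The flat solutions will be compared with two-plane functions $U_\beta(x) = \alpha x_n^+ - \beta x_n^-$, $\alpha^2 - \beta^2 = 1$, where the Lipschitz bound on $u$ confines $(\alpha,\beta)$ to a compact range.

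\textbf{Partial Harnack.} First I would prove a Harnack-type dichotomy: if at scale $1$ the function $u$ is trapped between $U_\beta(x - a e_n)$ and $U_\beta(x - b e_n)$ with $b-a \leq \eps$ and $\|f\|_\infty \leq \eps^2$, then in a smaller concentric ball the slab contracts by a universal factor, either from above or from below. The proof relies on the Krylov--Safonov interior Harnack inequality for $\mathcal{F}(D^2 v) = g$ in each phase combined with a perturbation of $U_\beta$ serving as a barrier that crosses $F(u)$ while respecting the free boundary condition. Iterated, this estimate produces uniform H\"older continuity of the normalized graph
\[
\tilde u_\eps(x) = \frac{u(x) - U_\beta(x)}{\eps}.
\]

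\textbf{Improvement of flatness.} Next I would show that there exist universal $\rho \in (0,1)$ and $\eps_0>0$ such that if $u$ is $\eps$-flat at scale $1$ with $\eps\leq \eps_0$ and $\|f\|_\infty \leq \eps^2$, then $u$ is $(\eps\rho/2)$-flat in $B_\rho$, in a possibly tilted direction $\nu$ with $|\nu - e_n| \lesssim \eps$. The argument is by contradiction and compactness: given a sequence $u_k$ with $\eps_k \to 0$ violating the conclusion, the H\"older bounds from partial Harnack let the graphs $\tilde u_{\eps_k}$ converge along a subsequence to a limit $\tilde u_\infty$ on each side of $\{x_n = 0\}$. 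Using Caffarelli's interior $C^{1,\alpha}$ estimates for $\mathcal{F}(D^2 v) = g$ with small $g$, together with a viscosity passage to the limit via the Pucci extremal operators, one checks that $\tilde u_\infty$ is a classical solution on each side of a linear uniformly elliptic equation, satisfying across $\{x_n = 0\}$ the linearized transmission condition
\[
\alpha\,(\tilde u_\infty)_n^+ - \beta\,(\tilde u_\infty)_n^- = 0,
\]
obtained by differentiating $(u_\nu^+)^2 - (u_\nu^-)^2 = 1$ at $(\alpha,\beta)$. Standard $C^{1,\alpha}$ regularity for this linear transmission problem then approximates $\tilde u_\infty$ by a linear function at the origin, contradicting the failure of the improvement.

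\textbf{Iteration and main obstacle.} Finally I would iterate the improvement at dyadic scales $\rho^k$. The tilts satisfy $|\nu_{k+1} - \nu_k| \lesssim 2^{-k}$, so $\nu_k \to \nu_\infty$ geometrically with a H\"older rate, and $F(u)$ becomes a $C^{1,\gamma}$ graph in $B_{1/2}$ with $\gamma=\gamma(\rho)\in(0,1)$; the threshold $\bar\delta$ is chosen small enough that the Harnack plus improvement loop can be started after an initial normalization absorbing $\|f\|_\infty$ and $Lip(u)$. The main technical obstacle is the linearization step: fully nonlinear $\mathcal{F}$ is not differentiable in any classical sense, so identifying the limit equation for $\tilde u_\infty$ requires a careful viscosity-compactness argument, and the subsequent use of sharp $C^{1,\alpha}$ regularity for the linear transmission problem up to the flat interface is the most delicate ingredient.
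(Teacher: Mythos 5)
Your outline reproduces the paper's general scheme (partial Harnack, compactness and linearization, dyadic iteration), but as written it only covers the non-degenerate regime, and the step in which the flatness hypothesis \eqref{flatness} is ``upgraded'' to trapping between two translates of $U_\beta$ is exactly where the two-phase difficulty lives. When $u^-$ is very small without being identically zero (equivalently, when $\beta$ is of the order of the flatness parameter or smaller), flatness of $\{u^+=0\}$ does \emph{not} force closeness to a two-plane configuration; your normalization $\tilde u_\eps=(u-U_\beta)/\eps$ carries no information in the negative phase, and the Harnack comparison there breaks down (note also that the correct smallness of the source in the non-degenerate Harnack is $\|f\|_\infty\le\eps^2\beta$, not $\eps^2$, because the negative phase is normalized by $\beta$). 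The paper resolves this with a separate degenerate branch: an improvement of flatness for $u^+$ alone (Lemma \ref{improv4}, whose linearized problem is a Neumann problem), an iteration that must preserve $\|u_k^-\|_{L^\infty}\le\eps_k^2$, and, crucially, Lemma \ref{finalcase}: at the first scale where $u^-$ stops being negligible one rescales by $\eps^{1/2}$ and shows the solution re-enters the non-degenerate regime with a new slope $\beta'\sim\eps^2$, after which the non-degenerate iteration takes over. Without this dichotomy and hand-off the iteration cannot be closed, and your proposal contains no substitute for it.

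The second gap is the linearized problem itself. Since the two phases are normalized by the different factors $\alpha_k\eps_k$ and $\beta_k\eps_k$, the limit equation is a transmission problem \eqref{TP} governed by two \emph{different}, still fully nonlinear operators $\mathcal F^\pm\in\mathcal E(\lambda,\Lambda)$ (limits of $\frac{1}{\alpha_k\eps_k}\mathcal F_k(\alpha_k\eps_k M)$ and $\frac{1}{\beta_k\eps_k}\mathcal F_k(\beta_k\eps_k M)$) in the two half-balls, coupled by $a(\tilde u_n)^+-b(\tilde u_n)^-=0$ on $\{x_n=0\}$. Calling this a ``linear transmission problem'' and invoking ``standard'' $C^{1,\alpha}$ regularity skips what is in fact the new technical core of the paper: Theorem \ref{lineareg} is proved in Section 3 via Pucci-type classes adapted to the transmission condition, tangential $\eps$-envelopes, an ABP measure argument, and pointwise boundary $C^{1,\alpha}$ estimates, and it is not available off the shelf. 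Similarly, verifying the transmission condition for the limit $\tilde u$ in the viscosity sense requires constructing explicit comparison subsolutions for the original free boundary problem (the functions $\psi_k$ built from $\Gamma^\gamma$ plus squared-distance corrections), which your compactness step does not supply. These two items — the degenerate case with its rescaling hand-off, and the regularity theory for the fully nonlinear transmission problem — are the substantive missing pieces; the rest of your outline matches the paper's strategy.
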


Expressely note that we assume for $\mathcal{F}$ neither concavity nor
homogeneity of degree one.

When $\mathcal{F}$ is homogeneous of degree one (or when $\mathcal{F}_{r}(M)$
has a limit $\mathcal{F}^{\ast }(M),$ as $r\rightarrow 0,$ which is homogeneous of degree one) we
can also prove the following \emph{Lipschitz implies smoothness} result.

\begin{thm}[Lipschitz implies $C^{1,\protect\gamma }$]
\label{Lipmain} Let $\mathcal{F}$ be homogeneous of degree one and $u$ be a
Lipschitz viscosity solution to \eqref{fb} in $B_{1}$, with $0\in F(u)$.
Assume that $f\in L^{\infty }(B_{1})$ is continuous {in $B_{1}^{+}(u)\cup
B_{1}^{-}(u).$} If $F(u)$ is a Lipschitz graph in a neighborhood of $0$,
then $F(u)$ is $C^{1,\gamma }$ in a (smaller) neighborhood of $0$.
\end{thm}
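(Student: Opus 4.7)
The plan is to reduce Theorem \ref{Lipmain} to Theorem \ref{flatmain1} by producing, at a sufficiently small scale around $0$, the flatness hypothesis \eqref{flatness}. Once we find a radius $r_{0}>0$ and a unit vector $\nu$ such that $\{u^{+}=0\}\cap B_{r_{0}}$ is trapped between two parallel hyperplanes normal to $\nu$ at distance at most $\bar{\delta}r_{0}$, the isotropic rescaling $v(x)=u(r_{0}x)/r_{0}$ places us in the setting of Theorem \ref{flatmain1}: homogeneity of degree one of $\mathcal{F}$ together with the natural scaling of \eqref{fb} leaves the problem invariant, with source term $r_{0}f(r_{0}\,\cdot)$ still in $L^{\infty}$ and still continuous on each phase, and with the same Lipschitz constant.

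To find such a flat scale I would run a blow-up at the origin. Set
\[
u_{k}(x):=\frac{u(r_{k}x)}{r_{k}},\qquad r_{k}\downarrow 0.
\]
Because $\mathcal{F}$ is homogeneous of degree one (or, in the recession case, $\mathcal{F}_{r_k}(M):=r_k\mathcal{F}(M/r_k)$ converges locally uniformly to a degree-one $\mathcal{F}^{*}$), each $u_{k}$ is a viscosity solution of a two-phase problem with operator $\mathcal{F}$ (resp.\ $\mathcal{F}_{r_k}$) and right-hand side $r_{k}f(r_{k}\,\cdot)$, which tends to $0$ uniformly. The uniform Lipschitz bound gives precompactness of $\{u_{k}\}$, so along a subsequence $u_{k}\to u_{\infty}$ locally uniformly. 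Standard viscosity-solution closure, non-degeneracy at the free boundary to prevent the phases from collapsing, and the Lipschitz graph hypothesis controlling the position of $F(u_k)$, yield that $u_{\infty}$ is a global viscosity solution of the homogeneous problem
\[
\mathcal{F}^{*}(D^{2}u_{\infty})=0\ \text{in}\ \Omega^{\pm}(u_{\infty}),\qquad (u_{\infty,\nu}^{+})^{2}-(u_{\infty,\nu}^{-})^{2}=1\ \text{on}\ F(u_{\infty}),
\]
and that $F(u_{\infty})$ lies in a fixed Lipschitz cone through $0$.

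The main obstacle is then a Liouville-type classification: every such global solution must be a two-plane solution
\[
u_{\infty}(x)=\alpha(x\cdot \nu_{0})^{+}-\beta(x\cdot \nu_{0})^{-},\qquad \alpha>0,\ \alpha^{2}-\beta^{2}=1.
\]
In the homogeneous linear setting this is the content of the Alt--Caffarelli--Friedman monotonicity formula, which is not at our disposal since $\mathcal{F}$ is neither linear nor assumed concave. Instead I would argue by tools tailored to homogeneous fully nonlinear two-phase problems, in the spirit of Wang, Feldman and Ferrari--Argiolas: boundary Harnack and H\"older regularity for quotients of positive solutions in Lipschitz domains, combined with a further dilation argument exploiting the fact that $u_{\infty}(rx)/r$ is again a solution of the same limit problem. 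Iterating the dilation $r\to\infty$ is what forces the tangent cone to be a half-space, and this is precisely where degree-one homogeneity of $\mathcal{F}^{*}$ is indispensable, since it is what makes the class of solutions closed under arbitrary dilations.

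With the classification in hand, the local uniform convergence $u_{k}\to u_{\infty}$ implies that for all sufficiently large $k$ the set $\{u_{k}^{+}=0\}\cap B_{1}$ is trapped in a strip of width less than $\bar{\delta}$ around $\{x\cdot \nu_{0}=0\}$. Unwinding the scaling, this is exactly the flatness hypothesis of Theorem \ref{flatmain1} for $u$ in the ball $B_{r_{k}}$, and an application of that theorem yields $F(u)\in C^{1,\gamma}$ in a neighborhood of $0$, as desired.
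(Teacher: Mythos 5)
Your proposal is essentially the paper's own argument: blow up at the origin, use the uniform Lipschitz bound, nondegeneracy of $u^+$ and the vanishing of the rescaled right-hand side to pass to a global solution of the homogeneous problem whose free boundary is a global Lipschitz graph, classify it as a two-plane solution $U_\beta$ by a Liouville-type result, and then apply the flatness theorem to $u_k$ for $k$ large. The only real difference is in the Liouville step, which the paper settles more directly than your boundary-Harnack sketch: since degree-one homogeneity makes the global problem dilation invariant, the main theorem of \cite{F1} (Lipschitz implies $C^{1,\gamma}$ for the homogeneous problem) yields the uniform pointwise estimate $|g_R(x') - g_R(0) - \nabla g_R(0)\cdot x'| \leq C|x'|^{1+\alpha}$ for every rescaling $g_R(x')=g(Rx')/R$ of the free boundary graph with $C$ depending only on $n,\lambda,\Lambda$ and the Lipschitz constant, and unwinding this and letting $R\to\infty$ forces $g$ to be linear, hence $U=U_\beta$ — so you can quote Feldman's result as a black box rather than re-running boundary Harnack machinery.
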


Theorem \ref{Lipmain} follows from Theorem \ref{flatmain1} and the main
result in \cite{F1}, via a blow-up argument.\smallskip

\smallskip
As we have already mentioned, to prove Theorem \ref{flatmain1} we will use the technique introduced in \cite
{D,DFS}. In particular, the structure of our paper parallel the one in
\cite{DFS}. Thus, the proof of Theorem \ref{flatmain1} is obtained through
an iterative improvement of flatness via a suitable compactness and
linearization argument. A crucial tool is the $C^{1,\alpha }$ regularity of
the solution of the linearized problem, that turns out to be a transmission
problem in the unit ball, governed by two different fully nonlinear
operators in two half-balls. Section 3 is devoted to prove this
regularity result, that, we believe, could be interesting in itself.

As it is common in two-phase free boundary problems, the main difficulty in the
analysis comes from the case when $u^{-}$ is degenerate, that is very close
to zero without being identically zero. In this case the flatness assumption
does not guarantee closeness of $u$ to an \textquotedblleft optimal"
(two-plane) configuration. Thus one needs to work only with the positive
phase $u^{+}$ to balance the situation in which $u^{+}$ highly predominates
over $u^{-}$ and the case in which $u^{-}$ is not too small with respect to $%
u^{+}.$ For this reason, throughout the paper we distinguish two cases, which we refer to as the non-degenerate and the degenerate case.

The paper is organized as follows. In Section 2, we provide basic definitions and reduce our main flatness theorem to a proper ``normalized" situation (i.e. closeness to a two-plane solution).  As already mentioned above, Section 3 is devoted to the linearized problem. In Section 4 we obtain the necessary Harnack inequalities which rigorously allow the
linearization of the problem. Section 5 provides the proof of the improvement of flatness lemmas. Finally, the main theorems are proved in the last section.

A remark on further generalization is in order. We have choosen the particular free boundary
condition in problem \eqref{fb} in order to better emphasize the ideas
involved in our proofs. Also, to avoid the machinery of $L^{p}$-viscosity
solution, we assume that $f$ is bounded in $\Omega $ and continuous {in $%
\Omega ^{+}(u)\cup \Omega ^{-}(u)$} but everything works with $f$ merely
bounded, measurable.

Following the lines of Sections 7-9 in \cite{DFS}, our results
can be extended to a more general class of operators $\mathcal{F}=\mathcal{F}%
\left( M,p\right) $, uniformly Lipschitz with respect to $p,$ $\mathcal{F}%
\left( 0,0\right) =0$, (homogeneous of degree one in both arguments for Theorem 1.2) with free boundary conditions given by%
\begin{equation*}
u_{\nu }^{+}=G(u_{\nu }^{-},x),
\end{equation*}%
where
\begin{equation*}
G:[0,\infty )\times \Omega \rightarrow (0,\infty )
\end{equation*}%
satisfies the following assumptions:

\begin{itemize}
\item[(1)] $G(\eta,\cdot)\in C^{0,\bar\gamma}(\Omega)$ uniformly in $%
\eta; \ G(\cdot,x)\in C^{1,\bar\gamma}([0,L])$ for every $x\in \Omega.$

\item[(2)] $G^{\prime}(\cdot, x)>0$ with $G(0,x)\geq\gamma_0>0$ uniformly
in $x$.

\item[(3)] There exists $N>0$ such that $\eta ^{-N}G(\eta ,x)$ is strictly
decreasing in $\eta $, uniformly in $x$.\bigskip
\end{itemize}

A last remark concerns existence. In our generality, the existence of Lipschitz viscosity solutions with
proper measure theoretical properties of the free boundary is an open problem
and it will be object of future investigations. When $f=0$, and $\mathcal{F=F%
}\left( D^{2}u\right) $ is concave, homogeneous of degree one, the existence issue has
been settled by Wang in \cite{W3}.

Other two  recent papers, namely \cite{AT}, \cite{RT}, deal with well posedness and regularity for free boundary problems governed by fully nonlinear operators. In \cite{AT} the authors perform a complete analysis of singular perturbation problems and their limiting free boundary problems. Of particular interest is the limiting free boundary condition, obtained through a homogenization of the governing operator, under suitable hypotheses such as rotational invariance and e.g. concavity. In \cite{RT}, a free boundary problem with power type singular absorption term is considered. In this interesting paper the authors establish existence, optimal regularity and non degeneracy of a minimal solution, together with fine measure theoretical properties of the free boundary. Further regularity of the free boundary seems to be a challenging problem.

\section{Preliminaries} \label{section2}

In this section, we state basic definitions and we show
 that our flatness Theorem \ref{flatmain1} follows from Theorem \ref{main_new} below. From now on, $U_\beta$ denotes the one-dimensional function, $$U_\beta(t) = \alpha t^+ - \beta t^-, \quad  \beta \geq 0, \quad \alpha = \sqrt{1 +\beta^2},$$ where $$t^+ = \max\{t,0\}, \quad t^-= -\min\{t,0\}.$$

 %Then $U_\beta(x)= U_\beta(x_n)$ is the so-called two-plane solution to \eqref{fb} when $f \equiv 0$.

Here and henceforth, all constants depending only on $n,\lambda ,\Lambda ,\Vert f\Vert
_{\infty }$ and $Lip(u)$ will be called universal.

\begin{thm}\label{main_new} Let $u$ be a (Lipschitz) solution to \eqref{fb} in $B_1$ with $Lip(u) \leq L$ and $\|f\|_{L^\infty} \leq L$. There exists a universal
constant $\bar \ep>0$ such that, if \be\label{initialass}\|u - U_{\beta}\|_{L^{\infty}(B_{1})} \leq \bar \eps\quad \text{for some $0 \leq \beta \leq L,$}\ee and
\begin{equation*} \{x_n \leq - \bar \ep\} \subset B_1 \cap \{u^+(x)=0\} \subset \{x_n \leq \bar \ep \},\end{equation*} and $$\|f\|_{L^\infty(B_1)} \leq \bar \ep,$$
then $F(u)$ is $C^{1,\gamma}$ in $B_{1/2}$.
\end{thm}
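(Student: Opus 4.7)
The strategy is the standard De Silva iteration: I plan to prove an improvement of flatness lemma of the form ``if $u$ is $\epsilon$-close in $B_r$ to $U_\beta(x\cdot\nu)$ for some unit $\nu$, with $r^2\|f\|_\infty \le \epsilon^2 r$, then in $B_{\rho r}$ the function $u$ is $(\epsilon/2)$-close to a new two-plane configuration $U_{\beta'}(x\cdot\nu')$ with $|\beta-\beta'| + |\nu-\nu'| \le C\epsilon$'', for a universal $\rho<1$, and then iterate this lemma at the dyadic scales $r=\rho^k$. Starting from the hypotheses of Theorem \ref{main_new}, a harmless rescaling yields the smallness needed at scale $r=1$. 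For every free boundary point $x_0\in F(u)\cap B_{1/2}$, iterating centered at $x_0$ produces Cauchy sequences $\nu_k(x_0), \beta_k(x_0)$ with geometric decay $|\nu_{k+1}-\nu_k| + |\beta_{k+1}-\beta_k| \le C\rho^{k\gamma}$, hence a unit normal with Hölder modulus along $F(u)$.

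The proof of the improvement lemma is by compactness and linearization. Assume it fails along sequences $u_k$, $\beta_k\in[0,L]$, $\epsilon_k\to 0$. In the non-degenerate regime $\beta_k\ge\beta_\ast$, form the normalized differences
\begin{equation*}
\tilde u_k^\pm(x) = \frac{u_k(x) - U_{\beta_k}(x_n)}{\epsilon_k}\qquad \text{in } B_1^\pm(u_k).
\end{equation*}
The Harnack-type inequalities of Section~4 give uniform Hölder estimates on $\tilde u_k^\pm$ up to the (flat) free boundary. After straightening and extracting a subsequence, $\tilde u_k^\pm$ converges locally uniformly to a pair $\tilde u_\infty = (\tilde u_\infty^+, \tilde u_\infty^-)$ defined on the two half-balls of $\{x_n\gtrless 0\}$. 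Passing the viscosity equations and the free boundary condition to the limit (the forcing term drops out because $\|f_k\|_\infty\le\epsilon_k^2\to 0$) shows that $\tilde u_\infty$ solves the linearized transmission problem in $B_1$ governed by two fully nonlinear uniformly elliptic operators $\mathcal{F}^\pm$ (the linearizations of $\mathcal{F}$ around $\alpha_\infty e_n\otimes e_n$ and $-\beta_\infty e_n\otimes e_n$), with transmission condition $\alpha_\infty^2(\tilde u_\infty^+)_n = \beta_\infty^2(\tilde u_\infty^-)_n$ on $\{x_n=0\}$. The $C^{1,\alpha}$ estimate of Section~3 approximates $\tilde u_\infty$ at the origin by a linear correction compatible with modifying $\beta_\infty$ and tilting $e_n$, and this contradicts the failure of improvement for $\epsilon_k$ small.

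In the degenerate regime $\beta_k\to 0$, the negative phase carries no meaningful linear information, so I would instead perform a one-sided improvement of flatness for $u_k^+$, treating $u_k^-$ as an error of size $o(\epsilon_k)$ by flatness and interior estimates. The limit is then a one-phase fully nonlinear problem in the upper half-ball with the Neumann-type boundary condition $(\tilde u_\infty^+)_n=0$ on $\{x_n=0\}$, and the $C^{1,\alpha}$ theory of Section~3 again yields the improvement. A universal threshold $\beta_\ast$ determines which regime is used at each step, and once $\beta_k$ drops below $\beta_\ast$ the iteration remains in the degenerate regime.

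The principal difficulty is twofold. First, since $\mathcal{F}$ is neither concave nor homogeneous, the linearized operators $\mathcal{F}^\pm$ are only generic Pucci-controlled fully nonlinear operators, and the $C^{1,\alpha}$ regularity for the associated two-phase transmission problem, with two different such operators matched across a flat interface, is the main analytic hurdle, addressed separately in Section~3. Second, one must carefully manage the transition between the non-degenerate and degenerate regimes so that the parameters $(\nu_k,\beta_k)$ remain in the admissible range $\beta_k\in[0,L]$ and the decay rate of $|\nu_{k+1}-\nu_k|$ is uniform in the base point $x_0\in F(u)\cap B_{1/2}$; this uniformity is what converts the step-by-step improvement into global $C^{1,\gamma}$ regularity of $F(u)$.
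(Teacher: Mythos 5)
Your overall strategy (compactness, linearization to a two-operator transmission problem, $C^{1,\alpha}$ regularity of Section~3, dyadic iteration) is the same as the paper's, but there is a genuine gap in how you run the iteration in the degenerate regime, and it concerns exactly the point the paper identifies as the main two-phase difficulty. The degenerate improvement-of-flatness step does not merely need $u_k^-$ to be ``an error of size $o(\epsilon_k)$'': it needs the quantitative smallness $\|u_k^-\|_{L^\infty}\lesssim \epsilon_k^{2}$ (together with $\|f_k\|_\infty\lesssim\epsilon_k^{4}$), and this bound is \emph{not} propagated by the one-sided improvement itself, since that lemma only improves $u^+$ and says nothing about $u^-$ at the next scale. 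One must re-derive the bound on $u^-$ at each scale by a barrier/comparison argument (comparison of $u^-$ with an $\mathcal M^+$-barrier vanishing on the flat boundary), and this can fail at some first scale $\bar k$ where $\|u_{\bar k}^-\|_{L^\infty(B_1)}\ge\epsilon_{\bar k}^2$. Your proposal asserts the opposite dynamics --- ``once $\beta_k$ drops below $\beta_\ast$ the iteration remains in the degenerate regime'' --- whereas in fact the delicate step is the exit \emph{from} the degenerate regime: when $u^-$ ceases to be negligible relative to the current flatness, one must show (as in the paper's Lemma~\ref{finalcase}) that $u^-$ is in fact comparable to $b\,\epsilon_{\bar k}\,|x_n|$ near the interface (Hopf-type nondegeneracy after a Pucci replacement), so that the rescaling $x\mapsto \epsilon_{\bar k}^{1/2}x$ places $u$ within distance $C\epsilon_{\bar k}^{1/2}$ of a genuine two-plane solution $U_{\beta'}$ with $\beta'\sim\epsilon_{\bar k}^2$, after which the \emph{non-degenerate} scheme takes over permanently. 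Without this transitional lemma the hypotheses of your degenerate improvement step cannot be verified at every scale and the iteration breaks down.

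A related quantitative omission: in the non-degenerate lemma the forcing must be small relative to $\beta$, i.e. $\|f\|_\infty\le\epsilon^2\beta$, because the negative phase is normalized by $\beta\epsilon$ (the paper divides by $\alpha_k\epsilon_k$ and $\beta_k\epsilon_k$ in the two phases, not by $\epsilon_k$ alone); your condition $\|f\|_\infty\le\epsilon^2$ is insufficient when $\beta$ is tiny, which is precisely the situation produced by the degenerate-to-non-degenerate transition ($\beta'\sim\epsilon_{\bar k}^2$, new flatness $\eta\sim\epsilon_{\bar k}^{1/2}$), where one must check $\|f\|\le\eta^2\beta'$ explicitly. Finally, a minor conceptual correction: since $U_\beta$ is piecewise linear, the limiting operators $\mathcal F^\pm$ are not linearizations of $\mathcal F$ around matrices of the form $\alpha_\infty e_n\otimes e_n$; they are subsequential limits of the rescalings $M\mapsto \frac{1}{\alpha_k\epsilon_k}\mathcal F(\alpha_k\epsilon_k M)$ and $M\mapsto \frac{1}{\beta_k\epsilon_k}\mathcal F(\beta_k\epsilon_k M)$, which stay in $\mathcal E(\lambda,\Lambda)$; this does not affect the structure of the transmission problem but is the correct mechanism in the absence of differentiability or homogeneity of $\mathcal F$.
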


The rest of the paper is devoted to the proof of Theorem \ref{main_new}, following the strategy developed in \cite{DFS}.

\

We recall some standard fact about fully nonlinear uniformly elliptic operators. For a comprehensive treatment of fully nonlinear elliptic equations, we refer the reader to \cite{CC}.

From now on, the class of all uniformly elliptic operators with ellipticity constants $\lambda,\Lambda$ and such that $\mathcal F(0)=0$ will be denoted by $\mathcal{E}(\lambda, \Lambda).$

We start with the definition of the extremal Pucci operators, $\mathcal M^-_{\lambda,\Lambda}$ and $\mathcal M^+_{\lambda,\Lambda}$. Given $0<\lambda \leq \Lambda,$ we set
$$\mathcal{M}^-_{\lambda,\Lambda}(M) = \lambda \sum_{e_i >0} e_i + \Lambda \sum_{e_i <0} e_i,$$
$$\mathcal{M}^+_{\lambda,\Lambda}(M) = \Lambda \sum_{e_i >0} e_i + \lambda \sum_{e_i <0} e_i,$$ with  the $e_i=e_i(M)$ the eigenvalues of $M$.

%Also,
%$$
%\mathcal{M}^-_{\lambda,\Lambda}(M)=  \inf_{A\in\mathcal{A}_{\lambda,\Lambda}}\mbox{Tr}(AM),\quad \mathcal{M}^+_{\lambda,\Lambda}(M)=\sup_{A\in\mathcal{A}_{\lambda,\Lambda}}\mbox{Tr}(AM),
%$$
%$$
%\mathcal{A}_{\lambda,\Lambda}=\{A\in\mathcal{S}^{n\times n}:\quad |x|^2\lambda\leq a_{ij} x_ix_j\leq \Lambda|x|^2, \quad \forall x\in \R^n \}.
%$$
In the rest of the paper, whenever it is obvious, the dependance of the extremal operators from $\lambda, \Lambda$ will be omitted.

We recall that if $\mathcal{F} \in \mathcal E(\lambda,\Lambda)$ then\begin{equation*}
\mathcal{M}^-_{\frac{\lambda}{n},\Lambda}(M)\leq \mathcal{F}(M)\leq \mathcal{M}^+_{\frac{\lambda}{n},\Lambda}(M),
\end{equation*} a fact which will be used very often throughout the paper.

Finally, it is readily verified  that if $\mathcal{F} \in \mathcal E(\lambda,\Lambda)$ is the rescaling operator defined by
$$
\mathcal{F}_r(M)=\frac{1}{r}\mathcal{F}(rM), \quad r>0
$$
then $\mathcal F_r$ is still an operator in our class $\mathcal E(\lambda,\Lambda).$

 \

We now introduce the definition of viscosity solution to our free boundary problem \eqref{fb}. First we recall some standard notion.

Given $u, \varphi \in C(\Omega)$, we say that $\varphi$
touches $u$ by below (resp. above) at $x_0 \in \Omega$ if $u(x_0)=
\varphi(x_0),$ and
$$u(x) \geq \varphi(x) \quad (\text{resp. $u(x) \leq
\varphi(x)$}) \quad \text{in a neighborhood $O$ of $x_0$.}$$ If
this inequality is strict in $O \setminus \{x_0\}$, we say that
$\varphi$ touches $u$ strictly by below (resp. above).

Let $\mathcal F \in \mathcal E(\lambda, \Lambda)$. If $v \in C^2(O)$, $O$ open subset in $\R^n,$ satisfies  $$\mathcal F(D^2 v) > f  \  \ \ (\text{resp}. <f)\quad \text{in $O$,}$$ with $f \in C(O),$ we call $v$ a (strict) classical subsolution (resp. supersolution) to the equation $\mathcal F(D^2 v) = f $ in $O$.

We say that $u \in C(O)$ is a viscosity solution to $$\mathcal F(D^2 v) = f  \quad \text{in $O$,}$$ if $u$ cannot be touched by  above (resp. below) by a strict classical subsolution (resp. supersolution) at an interior point $x_0 \in O.$

We now turn to the definition of viscosity solution to our free boundary problem \eqref{fb}.
\begin{defn}\label{defsub}
We say that $v \in C(\Omega)$ is a strict (comparison) subsolution (resp.
supersolution) to (\ref{fb}) in $\Omega$, if and only if $v \in C^2(\overline{\Omega^+(v) }) \cap  C^2(\overline{\Omega^-(v) })$ and the
following conditions are satisfied:
\begin{enumerate}
\item $ \mathcal F(D^2 v)  > f $ (resp. $< f $) in $\Omega^+(v) \cup \Omega^-(v)$;
\item If $x_0 \in F(v)$, then $$(v_\nu^+)^2 - (v_\nu^-)^2 >1\quad (\text{resp. $(v_\nu^+)^2 - (v_\nu^-)^2 <1$, $v_\nu^+(x_0) \neq 0$).}$$
\end{enumerate}

\end{defn}

Notice that by the implicit function theorem, according to our definition the free boundary of a comparison subsolution/supersolution is $C^2$.

\begin{defn}\label{defnhsol} Let $u$ be a continuous function in
$\Omega$. We say that $u$ is a viscosity solution to (\ref{fb}) in
$\Omega$, if the following conditions are satisfied:
\begin{enumerate}
\item $ \mathcal F(D^2 u) = f$ in $\Omega^+(u) \cup \Omega^-(u)$ in the
viscosity sense;
\item  Any (strict) comparison subsolution $v$ (resp. supersolution) cannot touch $u$ by below (resp. by above) at a point $x_0 \in F(v) $ (resp. $F(u)$.)
\end{enumerate}
\end{defn}

The next lemma shows that ``$\delta-$flat" viscosity solutions (in the sense of our main Theorem \ref{flatmain1}) enjoy non-degeneracy of the positive part $\delta$-away from the free boundary.
Precisely,
\begin{lem}\label{deltand}Let $u$ be a solution to \eqref{fb} in $B_2$ with  $Lip(u) \leq L$ and $\|f\|_{L^\infty} \leq L$. If
$$\{x_n \leq g(x') - \delta\} \subset \{u^+=0\} \subset \{x_n \leq g(x') + \delta\},$$ with $g$ a Lipschitz function, $Lip(g) \leq L, g(0)=0$,
then $$u(x) \geq c_0 (x_n- g(x')), \quad x \in \{x_n \geq g(x') + 2\delta\}\cap B_{\rho_0}, $$ for some $c_0, \rho_0 >0$ depending on $n,\lambda, \Lambda, L$ as long as  $\delta \leq c_0.$
\end{lem}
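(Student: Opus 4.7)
The lemma is a standard two-phase non-degeneracy statement; I sketch a proof by a sliding radial-barrier argument in the spirit of \cite{DFS, C1, C2}, adapted to the fully nonlinear non-homogeneous setting.

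\emph{Step 1 — Reduction by scaling.} Fix $x_0 \in B_{\rho_0}$ with $d := x_0 \cdot e_n - g(x_0') \geq 2\delta$ and set $\bar x := (x_0', g(x_0'))$. The rescaling $\tilde u(x) := u(\bar x + dx)/d$ preserves first derivatives (hence the free boundary condition) and yields a solution of \eqref{fb} with rescaled operator $\tilde{\mathcal F}(M) := d\,\mathcal F(M/d) \in \mathcal E(\lambda,\Lambda)$, source $\|\tilde f\|_\infty \leq dL$, $L$-Lipschitz graph $\tilde g$ with $\tilde g(0)=0$, and flatness parameter $\tilde \delta := \delta/d \leq 1/2$. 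Thus it suffices to prove $\tilde u(e_n) \geq c_0$ for a universal $c_0 > 0$; taking $\rho_0$ universally small forces $d$, $\tilde\delta$ and $\|\tilde f\|_\infty$ to be universally small. I drop tildes below.

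\emph{Step 2 — Radial subsolution.} Let $y_0 := -A\,e_n$ with $A = A(L) \gg 1$ chosen so that $\overline{B_A(y_0)} \cap B_2 \subset \{x_n < g(x') - \delta\} \subset \{u^+ = 0\}$ (possible because $g$ is $L$-Lipschitz and $g(0)=0$). With a small universal $\kappa > 0$ set $R := A + 1 + \kappa$, so $e_n$ lies strictly inside $B_R(y_0)$. Define on the closed annulus $\{A \leq |x-y_0| \leq R\}$
\[ \phi(x) := \gamma\bigl(|x - y_0|^{-\alpha} - R^{-\alpha}\bigr), \]
extended by $0$ outside $B_R(y_0)$. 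The eigenvalues of $D^2|x-y_0|^{-\alpha}$ are $\alpha(\alpha+1)|x-y_0|^{-\alpha-2}$ (radial) and $-\alpha|x-y_0|^{-\alpha-2}$ (tangential, $n-1$ copies), whence
\[ \mathcal M^-_{\lambda/n,\Lambda}(D^2\phi) \geq \gamma\alpha\bigl[(\lambda/n)(\alpha+1) - \Lambda(n-1)\bigr]|x-y_0|^{-\alpha-2} \geq c_1 > 0 \]
on the open annulus, for any universal $\alpha > n\Lambda(n-1)/\lambda - 1$. Hence $\mathcal F(D^2\phi) \geq c_1 > \|f\|_\infty$ for $d$ small. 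Calibrating $\gamma$ so that $|\nabla\phi| = 1+\eta$ on $\partial B_R(y_0)$ for a small universal $\eta>0$ makes $\phi$ a strict comparison subsolution of \eqref{fb} (Definition \ref{defsub}) in a neighborhood of its smooth outer free boundary $\partial B_R(y_0)$: $\phi^- \equiv 0$, the equation holds strictly where $\phi>0$, and $(\phi_\nu^+)^2 = (1+\eta)^2 > 1$. Moreover $\phi(e_n) = \gamma((A+1)^{-\alpha} - R^{-\alpha}) =: c_0 > 0$ is a universal positive constant.

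\emph{Step 3 — Sliding.} Translate upward by $\phi_s(x) := \phi(x - se_n)$, $s \geq 0$; the outer free boundary of $\phi_s$ is $\partial B_R(y_0 + se_n)$, while the inner ball $B_A(y_0 + se_n)$ stays in $\{u^+=0\}$ for all $s$ in a universal interval $[0,S]$ by the choice of $A$. For $s = S$ sufficiently large, the outer free boundary of $\phi_s$ lies above $F(u)$ in the relevant neighborhood and $\phi_S \leq u^+$ there (using $Lip(u) \leq L$ and the slope calibration $|\nabla\phi|=1+\eta$). Decrease $s$ continuously. If no contact ever occurs, then at $s=0$, $\phi \leq u^+$ in the neighborhood, and evaluating at $e_n$ yields $u(e_n) \geq \phi(e_n) = c_0$. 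Otherwise, at the first contact $s_0>0$ with contact point $z$, three possibilities are ruled out: \textit{(i)} $z$ on the outer boundary of the comparison region, excluded by the Lipschitz estimate on $u$ with $\rho_0$ small; \textit{(ii)} $z$ interior to $\Omega^+(u) \cap \{\phi_{s_0}>0\}$, excluded by the strong maximum principle applied to the strict inequality $\mathcal F(D^2\phi_{s_0}) > f = \mathcal F(D^2 u)$; \textit{(iii)} $z \in F(u) \cap F(\phi_{s_0})$, excluded by Definition \ref{defnhsol} since $\phi_{s_0}$ is a strict comparison subsolution of \eqref{fb}.

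The main technical point is the joint calibration of the parameters $A, R, \alpha, \gamma, \eta, \rho_0$ so that (a) $\phi$ is a strict subsolution with the free-boundary inequality strictly satisfied, (b) the inner ball $B_A(y_0+se_n)$ remains in $\{u^+=0\}$ throughout the sliding, and (c) the comparison on the auxiliary outer boundary of the sliding region is preserved for all $s \in [0,S]$; each reduces to a universal parameter choice given the Lipschitz bound $Lip(u) \leq L$ and the quantitative flatness hypothesis.
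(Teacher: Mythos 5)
Your Step 1 (reduction by scaling to proving $\tilde u(e_n)\geq c_0$) is fine and is essentially the paper's first step, but the barrier argument in Steps 2--3 has a genuine gap: the comparison you need can neither be initialized nor maintained. Your $\phi_s$ is nonnegative and attains its maximum $\gamma(A^{-\alpha}-R^{-\alpha})>0$ on the inner sphere $\partial B_A(y_0+se_n)$, and you deliberately keep the inner ball $B_A(y_0+se_n)$ inside $\{u^+=0\}$, where $u\leq 0$. Hence $\phi_s> u$ (and $\phi_s>u^+$) on the inner boundary of the comparison annulus for every $s$, so the starting ordering ``$\phi_S\leq u^+$'' is false and there is no admissible ``first contact'' configuration; your list (i)--(iii) omits exactly this inner boundary, and it cannot be fixed within this scheme. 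Moreover, even on the outer boundary of the comparison region your exclusion (i) is circular: to keep a strictly positive barrier below $u$ there you need a quantitative \emph{lower} bound on $u$ in its positive phase, which is precisely the non-degeneracy being proved, while $Lip(u)\leq L$ only bounds $u$ from \emph{above} and thus works against you. (A smaller, fixable defect: $\overline{B_A(-Ae_n)}\cap B_2\not\subset\{x_n<g(x')-\delta\}$, since the closed ball contains the origin; the center must be lowered by an amount of order $AL^2+\delta$, which also destroys the calibration $R=A+1+\kappa$.)

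The paper avoids this trap by running the comparison in the opposite direction: it slides a nonnegative radial \emph{strict supersolution} $w_t$ from above, for which the ordering $w_t\geq u$ costs nothing wherever $u\leq 0$ (since $w_t\geq 0$), and whose gradient on its zero sphere is $<1$ so that the free boundary condition forbids touching at points of $F(u)$. The first contact is therefore forced onto the outer level set $\{w_{\bar t}=\eta\}$ inside the positive phase, producing a point $z$ at bounded distance with $u(z)=\eta>0$; Lipschitz continuity then yields a ball of radius $\eta/L$ around $z$ contained in $\{u>0\}$, and a Harnack chain along the region above the graph transports the bound to $e_n$, giving $u(e_n)\geq c_0$. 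In short, the quantitative lower bound is obtained indirectly (supersolution contact point plus Harnack chain), not by placing a positive subsolution underneath $u$; if you want to salvage your write-up, you should switch the barrier to a supersolution slid from above and add the Harnack-chain step.
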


\begin{proof} The proof follows the lines of the analogous result in \cite{DFS}. For completeness we present the details.
All constants in this proof depend on $n,\lambda,\Lambda, L.$

It suffices to show that our statement holds for $ \{x_n \geq g(x') + C\delta\}$ for a possibly large constant $C$. Then one can apply Harnack inequality to obtain the full statement.

We prove the statement above at $x=de_n$ (recall that $g(0)=0$). Precisely, we want to show that $$u(de_n) \geq c_0 d, \quad d \geq C \delta.$$After rescaling (for simplicity we drop all subindices in the rescalings and remark that the rescaled operator preserves the same ellipticity constants as $\mathcal F$), we reduce to proving that $$u(e_n) \geq c_0$$ as long as $\delta \leq 1/C$, and $\|f\|_\infty$ is sufficiently small.
Let $\gamma>\max\{0, \frac{\Lambda}{\lambda}n(n-1)-1\}$ and $$w(x)= \frac{1}{2\gamma}(1-|x|^{-\gamma})$$ be defined on the closure of the annulus $B_2 \setminus \overline B_1$. Since $w(x)=w(|x|)$ is a radial function ($r=|x|$), we easily compute that in the appropriate system of coordinates,
$$D^2w = \frac{1}{2}r^{-\gamma -2} diag\{-(\gamma+1), 1, 1, \ldots, 1\}.$$ Thus,  \begin{align*} \mathcal M^+_{\frac{\lambda}{n}, \Lambda}(D^2 w)  &= \frac{1}{2}|x|^{-\gamma-2} ((n-1)\Lambda -\frac{\lambda}{n}(\gamma+1)).\end{align*}
Hence, for  $\|f\|_\infty$ small enough
$$\mathcal F(D^2 w) \leq \mathcal M^+_{\frac{\lambda}{n}, \Lambda}(D^2 w) <- \|f\|_\infty\quad \textrm{on  $B_2\setminus  \overline{B}_{1}$}.$$ Let $$w_t(x) = w(x+te_n).$$ Notice that $$|\nabla w_0| < 1\quad \textrm{on $\p B_{1}.$}$$

From our flatness assumption for $t>,0$ sufficiently large (depending on the Lipschitz constant of $g$), $w_t$ is strictly above $u$.  We decrease $t$ and let $\bar t$ be the first $t$ such that $w_t$ touches $u$ by above. Since $w_{\bar t}$ is a strict supersolution to $\mathcal F(D^2 u)=f$ in
$B_2\setminus\bar{B}_1$ the touching point $z$ can occur only on the
$\eta:=\frac{1}{2\gamma}(1-2^{-\gamma}) $ level set in the positive phase of $u$, and $|z|\leq C = C(L).$

Since $u$ is  Lipschitz continuous, $0< u (z) = \eta \leq L d(z, F(u))$, that is a full ball around $z$ of radius  $\eta/L$ is contained in the positive phase of $u$. Thus, for $\bar \delta$ small depending on $\eta, L$ we have that $B_{\eta/2L}(z) \subset \{x_n \geq  g(x') + 2 \bar \delta\}$.

Since $x_n =g(x') + 2 \bar \delta$ is Lipschitz we can connect $e_n$ and $z$ with a chain of intersecting balls included in the positive side of $u$ with radii comparable to $\eta/2L$.  The number of balls depends on $L$ . Then we can apply Harnack inequality and obtain $$u(e_n) \geq c u(z)= c_0,$$ as desired.
\end{proof}

Next, we state a compactness lemma. Since its proof is standard (see Lemma 2.5 in \cite{DFS} and Proposition 2.9 in \cite{CC}), we omit the details.

\begin{lem} \label{compact_delta}Let $u_k$ be a sequence of viscosity solutions to \eqref{fb}  with operators $\mathcal F_k \in \mathcal E(\lambda, \Lambda)$ and  right-hand-sides $f_k$ satisfying  $\|f_k\|_{L^\infty} \leq L.$ Assume  $\mathcal{F}^k\to \mathcal{F}^*$
uniformly on compact sets of matrices,
 $u_k \to u^*$ uniformly on compact sets, and $\{u_k^+=0\} \to \{(u^*)^+=0\}$ in the Hausdorff distance. Then $$-L \leq \mathcal F^*(D^2 u^*) \leq L, \quad \text{in $\Omega^+(u^*) \cup \Omega^-(u^*)$} $$ in the viscosity sense and $u^*$ satisfies the free boundary condition  $$ ({u^*_\nu}^+)^2 - ({u^*_\nu}^-)^2= 1  \quad \hbox{on $F(u^*)$}$$ in the viscosity sense of Definition $\ref{defnhsol}.$
\end{lem}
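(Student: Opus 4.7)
The lemma packages two standard stability statements.

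For the interior equation, I would fix a compact set $K\subset\Omega^+(u^*)$. Since $u^*>0$ on $K$ and $u_k\to u^*$ uniformly, $K\subset\Omega^+(u_k)$ for all $k$ large, so $u_k$ satisfies $-L\le \mathcal{F}_k(D^2 u_k)\le L$ on $K$ in the viscosity sense. Standard stability of viscosity inequalities under uniform convergence of both operators (on compact sets of matrices) and solutions, as in Proposition 2.9 of \cite{CC}, then yields $-L\le \mathcal{F}^*(D^2 u^*)\le L$ on $K$ in the viscosity sense. The case $K\subset\Omega^-(u^*)$ is analogous, using the Hausdorff convergence of the zero sets: for $y\in K$ there is a closed ball $\overline{B_\rho(y)}\subset\{u^*\le 0\}$, and Hausdorff convergence of $\{u_k^+=0\}$ to $\{(u^*)^+=0\}$ gives $\overline{B_{\rho/2}(y)}\subset\{u_k\le 0\}$ for $k$ large, whence $B_{\rho/2}(y)\subset\Omega^-(u_k)$.

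For the free boundary condition I would argue by contradiction. Suppose a strict comparison subsolution $v$ in the sense of Definition \ref{defsub} (relative to $\mathcal F^*$ and a bounded right-hand side) touches $u^*$ by below at $x_0\in F(v)$; the supersolution case is symmetric. Replacing $v$ by $v-\eta|x-x_0|^2$ for small $\eta>0$ makes the touching strict in a deleted neighborhood of $x_0$ while preserving both strict inequalities of Definition \ref{defsub} with a positive gap. Uniform convergence $\mathcal F_k\to\mathcal F^*$ on compact matrix sets, combined with the bound $\|f_k\|_\infty\le L$, then renders $v$ a strict comparison subsolution of $\mathcal F_k(D^2\cdot)=f_k$ with the same free boundary inequality, for all $k$ large.

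To produce an actual touching against $u_k$, I would slide $v$ along the inward normal $\nu$ to $\Omega^+(v)$ at $x_0$, setting $v^s(x):=v(x-s\nu)$. For small $s>0$ the strict subsolution inequalities persist and $v^s<v\le u^*$ strictly on $\overline{B_r(x_0)}$, so uniform convergence $u_k\to u^*$ gives $v^s<u_k$ on $\overline{B_r(x_0)}$ for $k$ large. Decreasing $s$ toward $0$ yields a first value $s_k$ at which $v^{s_k}$ touches $u_k$ by below at some $y_k\in\overline{B_r(x_0)}$; strict touching at $x_0$ forces $s_k\to 0$ and $y_k\to x_0$. The crucial claim is that $y_k\in F(v^{s_k})$: otherwise $y_k\in\Omega^+(v^{s_k})\cup\Omega^-(v^{s_k})$, so $v^{s_k}(y_k)=u_k(y_k)\neq 0$ places $y_k$ in $\Omega^+(u_k)\cup\Omega^-(u_k)$, and then $v^{s_k}$---a strict classical subsolution of $\mathcal F_k(D^2\cdot)=f_k$---would touch $u_k$ by below at an interior point of its phases, contradicting the interior viscosity property of $u_k$ in Definition \ref{defnhsol}. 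Hence $y_k\in F(v^{s_k})$, and the free boundary part of Definition \ref{defnhsol} applied to $u_k$ and to $v^{s_k}$ yields the contradiction.

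The main obstacle is precisely this last dichotomy: ensuring that the first touching against $u_k$ occurs on $F(v^{s_k})$ and not in the interior of one of its phases. The Hausdorff convergence hypothesis $\{u_k^+=0\}\to\{(u^*)^+=0\}$ and the strict-inequality slack built into Definition \ref{defsub} are the key ingredients that make the sliding argument land on the free boundary.
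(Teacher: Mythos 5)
Your overall strategy is the same as the one the paper points to (the analogue of Lemma 2.5 in \cite{DFS} together with Proposition 2.9 in \cite{CC}): stability of the interior equations plus a touching--transfer argument for the free boundary condition, with a dichotomy according to where the contact point with $u_k$ falls. However, there are concrete gaps. First, in the interior part your deduction ``$\overline{B_\rho(y)}\subset\{u^*\le 0\}$ and Hausdorff convergence of $\{u_k^+=0\}$ imply $\overline{B_{\rho/2}(y)}\subset\{u_k\le 0\}$'' is not valid: Hausdorff convergence of the zero sets only forbids positivity sets of $u_k$ containing balls of radius bounded away from zero inside $\{u^*\le0\}$; arbitrarily thin ``spikes'' where $u_k>0$ may cross $B_{\rho/2}(y)$ without violating either Hausdorff inclusion, and on such a ball $u_k$ solves the equation only off $F(u_k)$, so the stability theorem cannot be applied on a fixed ball. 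The argument has to be run at the level of touching points (transfer the test function to $u_k$ at contact points $x_k\to \bar x$ and treat separately the possibility $x_k\in F(u_k)$), which is exactly how the paper's reference proof is organized.

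Second, in the free boundary part: (i) the sliding step needs repair, since $v^s\le v$ on $\overline{B_r(x_0)}$ requires monotonicity of $v$ in the direction $\nu$, which is not guaranteed (only $v_\nu^+>1$ is known; $v_\nu^-$ may vanish and away from $F(v)$ there is no sign information), and, more seriously, decreasing $s$ from $s_0$ to $0$ may never produce a contact at all (e.g.\ if $u_k>v$ on the whole ball), in which case no contradiction is extracted for that $k$; the standard fix is to choose the translation parameter extremally (the largest translation, of either sign, keeping the translate $\le u_k$ on $\overline{B_r(x_0)}$), using the strict touching on $\partial B_r(x_0)$ to force the contact point into the interior --- this is the role of the ``suitable $c_k\to 0$'' in the paper's argument. (ii) The key dichotomy is stated incorrectly: from $y_k\in\Omega^+(v^{s_k})\cup\Omega^-(v^{s_k})$ you cannot conclude $v^{s_k}(y_k)\neq 0$, because $\Omega^-(v)=\{v\le0\}^\circ$ may contain zeros of $v$ (Definition \ref{defsub} allows $v_\nu^-=0$, even $v\equiv 0$ in part of the nonpositive phase). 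At such a contact point $u_k(y_k)=0$ and possibly $y_k\in F(u_k)$, and then neither the interior equation nor the free boundary clause of Definition \ref{defnhsol} (which, for subsolutions, requires the contact point to lie on $F(v^{s_k})$) applies. This can be repaired by exploiting the quadratic correction $v-\eta|x-x_0|^2$ you already introduced, which makes the nonpositive phase strictly negative off $x_0$ in a small ball, but the proposal does not address it. Finally, the supersolution case is not literally ``symmetric'': there the relevant contact set in Definition \ref{defnhsol} is $F(u_k)$, so the dichotomy must be run on the phases of $u_k$ rather than of the test function (this is in fact the easier case, and the one the paper's sketch spells out).
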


We are now ready to re-formulate our main Theorem \ref{flatmain1}. To do so, we prove the following Lemma \ref{normalize}. Then,  using this lemma, our main Theorem \ref{flatmain1} follows by rescaling from Theorem \ref{main_new}, as desired.

\begin{lem}\label{normalize} Let $u$ be a solution to \eqref{fb} in $B_1$ with  $Lip(u) \leq L$ and $\|f\|_{L^\infty} \leq L$. For any $\ep >0$ there exist $\bar \delta, \bar r >0$ depending on $\ep, n, \lambda, \Lambda$ and  $L$ such that if \begin{equation*} \{x_n \leq - \delta\} \subset B_1 \cap \{u^+(x)=0\} \subset \{x_n \leq \delta \},\end{equation*} with $0 \leq \delta \leq \bar \delta,$ then
\be\label{conclusion_beta}\|u - U_{\beta}\|_{L^{\infty}(B_{\bar r})} \leq \eps \bar r\ee for some $0 \leq \beta \leq L.$

\end{lem}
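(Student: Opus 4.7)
I would argue by compactness and contradiction. Let $C$ and $\alpha\in(0,1)$ be the universal constants (depending only on $n,\lambda,\Lambda,L$) appearing in the boundary $C^{1,\alpha}$ estimate for Pucci-class functions on a flat boundary portion; choose $\bar r=\bar r(\ep)\in(0,1/2]$ so that $C\bar r^{1+\alpha}\le(\ep/2)\bar r$. Suppose, contrary to the claim, that no $\bar\delta$ works for this $\bar r$. Then one extracts sequences $\delta_k\downarrow 0$ and viscosity solutions $u_k$ of \eqref{fb} in $B_1$, with $Lip(u_k)\le L$ and $\|f_k\|_{\infty}\le L$, each $\delta_k$-flat in the sense of the hypothesis, yet
$$\inf_{\beta\in[0,L]}\|u_k-U_\beta\|_{L^\infty(B_{\bar r})}>\ep\,\bar r\qquad\text{for every }k.$$

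\textbf{Passage to the limit.} By Arzel\`a--Ascoli and the uniform Lipschitz bound, along a subsequence $u_k\to u^*$ uniformly on compacta of $B_1$. The flatness sandwich forces $\{u_k^+=0\}\to\{x_n\le 0\}$ in Hausdorff distance on $B_1$; the nondegeneracy of Lemma \ref{deltand}, being uniform in $k$, passes to the limit to give $u^*\ge c_0 x_n$ on $\{x_n\ge 0\}\cap B_{\rho_0}$, while $u^*\le 0$ on $\{x_n\le 0\}$. Thus $F(u^*)=B_1\cap\{x_n=0\}$ near the origin. Lemma \ref{compact_delta} (with $\mathcal F^k\equiv\mathcal F$) then yields
$$-L\le\mathcal F(D^2u^*)\le L\qquad\text{in }\Omega^+(u^*)\cup\Omega^-(u^*),$$
and the free boundary condition $(u^{*+}_\nu)^2-(u^{*-}_\nu)^2=1$ on $F(u^*)$ in the viscosity sense of Definition \ref{defnhsol}.

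\textbf{Boundary expansion and closing the contradiction.} On each half-ball $B_{3/4}\cap\{\pm x_n>0\}$ the limit $u^*$ belongs to the Pucci class associated with $\lambda/n,\Lambda$ with right hand side $L^\infty$-bounded by $L$, and vanishes on the flat face $B_{3/4}\cap\{x_n=0\}$. Classical boundary $C^{1,\alpha}$ estimates (see \cite{CC}) produce $\alpha_0,\beta_0\in[0,L]$ with
\begin{align*}
|u^*(x)-\alpha_0 x_n|&\le C|x|^{1+\alpha}\quad\text{in }B_{1/2}\cap\{x_n\ge 0\},\\
|u^*(x)+\beta_0 x_n|&\le C|x|^{1+\alpha}\quad\text{in }B_{1/2}\cap\{x_n\le 0\}.
\end{align*}
Testing the viscosity free boundary condition at $0$ against the smooth two-plane comparison family $U_\eta$ (slightly perturbed to be a strict sub/supersolution) pins down the relation $\alpha_0^2-\beta_0^2=1$. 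Setting $\beta:=\beta_0$, one has $\beta\in[0,L)$ and
$$\|u^*-U_\beta\|_{L^\infty(B_{\bar r})}\le C\bar r^{1+\alpha}\le(\ep/2)\,\bar r.$$
Combined with uniform convergence $u_k\to u^*$ on $B_{\bar r}$, this contradicts the choice of the $u_k$ for $k$ large, and the lemma follows.

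\textbf{Main obstacle.} The technical heart of the argument is the universal boundary expansion for $u^*$: establishing the $C^{1,\alpha}$ estimate with constants depending only on $n,\lambda,\Lambda,L$ for Pucci-class functions with bounded right hand side and zero boundary values on a flat portion, and then faithfully converting the viscosity free boundary condition on $F(u^*)$ into the algebraic identity $\alpha_0^2-\beta_0^2=1$ between the two one-sided normal derivatives at the origin. Once this step is secured, the compactness closure is routine through the universal choice of $\bar r$.
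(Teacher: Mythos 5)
Your argument is correct and follows essentially the same route as the paper's proof: a compactness/contradiction argument using Lemma \ref{deltand} for nondegeneracy, Lemma \ref{compact_delta} to pass to the limit, one-sided boundary $C^{1,\alpha}$ regularity (the paper's Remark \ref{rem}) giving the expansion $u^*\approx \alpha_0 x_n^+ - \beta_0 x_n^-$ with $\alpha_0^2-\beta_0^2=1$ from the viscosity free boundary condition, and the choice of $\bar r$ so that $C\bar r^{1+\alpha}\le (\ep/2)\bar r$. The only small deviation is your fixing $\mathcal F^k\equiv\mathcal F$: since $\bar\delta,\bar r$ are claimed to depend only on $\ep,n,\lambda,\Lambda,L$, you should, as the paper does and as Lemma \ref{compact_delta} already allows, let the operators $\mathcal F_k\in\mathcal E(\lambda,\Lambda)$ vary along the contradicting sequence and extract a limiting operator $\mathcal F^*$ by uniform ellipticity.
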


\begin{proof} Given $\eps>0$ and $\bar r$ depending on $\eps$ to be specified later,
assume by contradiction that there exist a sequence $\delta_k \to 0$ and a sequence of solutions $u_k$ to the problem \eqref{fb} with operators $\mathcal F_k \in \mathcal E(\lambda, \Lambda)$, and right-hand-sides $f_k$ such that $Lip(u_k), \|f_k\| \leq L$ and \begin{equation}\label{trap} \{x_n \leq - \delta_k\} \subset B_1 \cap \{u_k^+(x)=0\} \subset \{x_n \leq \delta_k \},\end{equation} but the $u_k$ do not satisfy the conclusion \eqref{conclusion_beta}.

Then, up to a subsequence, the $u_k$ converge uniformly on compacts to a function $u^*$, and by the uniform ellipticity, $\mathcal F_k$ converges uniformly (up to a subsequence) on compact sets of matrices. In view of \eqref{trap} and the non-degeneracy  of $u_k^+$ $2\delta_k$-away from the free boundary (Lemma \ref{deltand}), we can apply our compactness lemma and conclude that $$-L \leq \mathcal F^*(D^2 u^*) \leq L, \quad \text{in $B_{1/2} \cap \{x_n \neq 0\}$}$$ in the viscosity sense and also  \be\label{FBu*} ({u^*_n}^+)^2 - ({u^*_n}^-)^2= 1  \quad \hbox{on $F(u^*)=B_{1/2} \cap \{x_n=0\},$}\ee with $$u^* >0 \quad \text{in $B_{\rho_0} \cap \{x_n >0\}$}.$$
Thus, by the Remark \ref{rem} in Section 3, $$u^* \in C^{1,\gamma}(B_{1/2} \cap \{x_n \geq 0\}) \cap C^{1,\gamma}(B_{1/2} \cap \{x_n \leq 0\})$$ for some $\gamma=\gamma(n,\lambda,\Lambda)$ and in view of \eqref{FBu*} we have that (for any $\bar r$ small)
$$\|u^* - (\alpha x_n^+ - \beta x_n^- )\|_{L^\infty(B_{\bar r})} \leq C(n,L) \bar r^{1+\gamma}$$
with $\alpha^2=1+\beta^2.$ If $\bar r$ is chosen depending on $\eps$ so that $$ C(n,L) \bar r^{1+\gamma} \leq \frac{\eps}{2} \bar r,$$ since the $u_k$ converge uniformly to $u^*$ on $B_{1/2}$ we obtain that for all $k$ large
$$\|u_k- (\alpha x_n^+ - \beta x_n^- )\|_{L^\infty(B_{\bar r})} \leq \eps \bar r,$$ a contradiction.
\end{proof}

%\begin{proof} Assume that there exists $x_0 \in \Omega^+(v) \cup
%F(v)$ such that $u(x_0)=v(x_0)$. If $x_0 \in \Omega^+(u)$ then
%according to Definition \ref{defsol}-(i), $$\Delta v (x_0) \leq
%\gamma \ep,$$ a contradiction. If $x_0 \in F(u)\cap F(v)$, then
%since $F(v)$ is smooth and $u \geq v$, there exists a tangent ball
%$B$ at $x_0$ to $F(u)$ which is contained in the positive side of
%$u$. By Definition \ref{defsol}-(ii), for $\nu $ the unit radial
%direction of $\partial B$ at $x_0$ into $\Omega^+(u)$,
%\begin{equation}\nonumber
%u(x)= \alpha (x-x_0,\nu)^+ + o(|x-x_0|), \ \text{as $x\rightarrow
%x_0$}
%\end{equation} with $$1-\gamma \ep \leq \alpha \leq 1+\gamma
%\ep.$$ On the other hand, since $u \geq v$ and $v$ is a strict
%subsolution, we get that
%$$\alpha \geq |\nabla v|(x_0) > 1+ \gamma \ep.$$Again, we reached
%a contradiction.
%\end{proof}

\

\section{The linearized problem}

Theorem \ref{main_new} follows from the regularity
properties of viscosity solutions to the following transmission problem,
\begin{equation}\label{TP}
  \begin{cases}
    \mathcal{F} ^+ (D^2\tilde u(x))=0 & \text{in $B^+_\rho$}, \\
\ \\
  \mathcal{F} ^- (D^2\tilde u(x))=0 & \text{in $B^-_\rho$}, \\
\ \\

a  (\tilde u_n)^+ - b (\tilde u_n)^-=0 & \text{on $B_\rho \cap \{x_n =0\}$},
  \end{cases}\end{equation}
where $(\tilde u_n)^{+}$ (resp. $(\tilde u_n)^-$) denotes the derivative in the $e_n$ direction of $\tilde u$ restricted to $\{x_n >0\}$ (resp. $\{x_n <0\}$.) Here $\mathcal F^\pm \in \mathcal E(\lambda,\Lambda)$, $a > 0$ and $b \geq 0.$
Finally, if $B_\rho$ is the ball of radius $\rho$ centered at zero, we denote $$B_\rho^+:= B_\rho \cap \{x_n>0\}, \quad B_\rho^-=B_\rho\cap\{x_n<0\}.$$
In what follows, we sometimes write   $\mathcal{F} ^\pm (D^2\tilde u(x))=0$ in $B^\pm_\rho$, to denote both the interior equations in \eqref{TP}.

\begin{defn}\label{def_visc}
We say that $\tilde u \in C(B_1)$ is a viscosity subsolution (resp. supersolution) to \eqref{TP} if

(i)   $ \mathcal{F}^\pm (D^2\tilde u(x)) \geq 0  \ (\text{resp. $ \leq 0$})\quad \text{in $B^\pm_\rho$},$ in the viscosity sense;

(ii)  If ($\delta>0$ small)$$\varphi \in C^2(\overline{B}_\delta^+) \cap C^2(\overline{B}_\delta^-)$$ touches $\tilde u$ by above (resp. by below) at $x_0 \in \{x_n=0\}$, then $$a \varphi_n^+(x_0) - b \varphi_n^-(x_0) \geq 0 \quad (\text{resp.$ \leq 0$}).$$\end{defn}

If $\tilde u$ is both a viscosity subsolution and supersolution to \eqref{TP}, we say that $\tilde u$ is a viscosity solution to \eqref{TP}.

Equivalently, the condition (ii) above can be replaced by the following one:

\

(ii') If  $P(x')$ denotes a quadratic polynomial in $x'$ and
$$P(x')+px_n^+ - qx_n^-$$ touches $\tilde u$ by above (resp. by below) at $x_0 \in \{x_n=0\}$, then $$a p - b q \geq 0 \quad (\text{resp.$ \leq 0$}).$$

Indeed, let $\varphi \in C^2(\overline{B}_\delta^+) \cap C^2(\overline{B}_\delta^-)$ touch $\tilde u$ say by above at $0 \in \{x_n=0\}$. Then, by Taylor's theorem we obtain that $ \varphi(0)+ D' \cdot x' + (px_n^+ - qx_n^-) + C|x|^2$ also touches $u$ by above at $0$ with $p=\varphi_n^+(0), q=\varphi_n^-(0)$.
Then, for all $\eps>0$ small, in a sufficiently small neighborhood of $0$ we get that $$\varphi(0) + D' \cdot x' +C|x'|^2  + (p+\eps) x_n^+ - (q-\eps) x_n^-$$ also touches $u$ by above at $0$ and hence by (ii')
$$a(p+\eps) - b (q-\eps) \geq 0.$$
The desired inequality follows by letting $\eps \to 0.$

\

The objective of this section is to prove the following regularity result for viscosity solutions to the linearized problem \eqref{TP}. Constants depending on $n,\lambda,\Lambda$ are called universal.

\begin{thm}\label{lineareg}Let $\tilde u$ be a solution to \eqref{TP} in $B_{1}$ such that $\|\tilde u\|_\infty \leq 1$. Then $u\in C^{1,\gamma}(\overline{B_{1/2}^+}) \cap C^{1,\gamma}(\overline{B_{1/2}^-})$ with a universal bound on the $C^{1,\gamma}$ norm. In particular, there exists a universal constant $\tilde C$ such that
\be\label{lr}|\tilde u(x) - \tilde u(0) -(\nabla_{x'}\tilde u(0)\cdot x' + \tilde px_n^+ - \tilde qx_n^-)| \leq \tilde C r^{1+\gamma}, \quad \text{in $B_r$}\ee for all $r \leq 1/4$ and with
\begin{equation}\label{idapbq=0} a \tilde p - b \tilde q=0.\end{equation}
\end{thm}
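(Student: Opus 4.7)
The plan is to reduce everything to an ``improvement of approximation'' iteration, where the tangent objects at an interface point are the $(n+1)$-parameter family of \emph{linear transmission functions}
\[
L(x) = c + D' \cdot x' + p\, x_n^+ - q\, x_n^-, \qquad a p - b q = 0,
\]
which themselves are viscosity solutions of \eqref{TP}. Up to replacing $\tilde u$ by $\tilde u - \tilde u(0)$ I may assume $\tilde u(0) = 0$. As a preliminary step I would establish a uniform $C^{0,\alpha}$ estimate for $\tilde u$ up to the interface: this follows by applying Krylov--Safonov / Caffarelli $C^{0,\alpha}$ estimates to $\tilde u$ in each half-ball and combining with the continuity of $\tilde u$ across $\{x_n = 0\}$. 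Together with Arzel\`a--Ascoli and the standard stability of viscosity solutions under uniform convergence of the $\mathcal F_k^\pm$, this yields compactness for the class of data $(\tilde u, \mathcal F^\pm, a, b)$ (normalize $a^2 + b^2 = 1$): any sequence has a subsequence along which $\tilde u_k \to \tilde u_\infty$ uniformly, $\mathcal F_k^\pm \to \mathcal F_\infty^\pm$ locally uniformly on matrices, $(a_k, b_k) \to (a_\infty, b_\infty)$, and $\tilde u_\infty$ is a viscosity solution (in the sense of Definition~\ref{def_visc}) of the limiting transmission problem.

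The heart of the proof is the following \emph{improvement of flatness lemma}: there exist universal $\rho \in (0, 1/4)$, $\gamma \in (0, 1)$, and $C_0$ such that for every viscosity solution $\tilde u$ of \eqref{TP} with $\|\tilde u\|_\infty \leq 1$ and $\tilde u(0) = 0$, one can find $D' \in \R^{n-1}$ and $p,q$ with $ap - bq = 0$, $|D'| + p + q \leq C_0$, satisfying
\[
\sup_{B_\rho} \bigl|\tilde u(x) - D' \cdot x' - p\, x_n^+ + q\, x_n^-\bigr| \leq \rho^{1+\gamma}.
\]
I would prove the lemma by contradiction. If it fails, a sequence $(\tilde u_k, \mathcal F_k^\pm, a_k, b_k)$ violates it and the compactness above produces a limit $\tilde u_\infty$ solving a limiting transmission problem; the contradiction is obtained by exhibiting the claimed approximation for this limit. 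To do so I would first use the straightening change of variables $y = (x', (b_\infty/a_\infty)\, x_n)$ in the lower half-ball (with the degenerate limits $a_\infty = 0$ or $b_\infty = 0$ handled separately as Neumann boundary problems) to turn the jump condition into a $C^1$-matching condition and reduce to a fully nonlinear equation with a piecewise-constant-in-$x$ operator on $B_\rho$. A linearization/perturbation step, combined with the classical boundary $C^{1,\gamma}$ estimate for fully nonlinear equations in a half-ball (or equivalently the Schauder estimate for the associated constant-coefficient linear transmission problem), then yields the sought Taylor expansion at $0$ with the transmission condition $a_\infty p - b_\infty q = 0$ encoded.

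With the improvement lemma in hand, I would iterate it in the usual way: the rescaled function
\[
v(x) = \frac{\tilde u(\rho x) - D'\cdot(\rho x) - p\,(\rho x)_n^+ + q\,(\rho x)_n^-}{\rho^{1+\gamma}}
\]
satisfies a transmission problem of the same class (the rescaled operators $\mathcal F_\rho(M) = \rho^{1-\gamma}\mathcal F(\rho^{-(1-\gamma)} M)$ stay in $\mathcal E(\lambda, \Lambda)$, and $a, b$ are unchanged) with $\|v\|_\infty \leq 1$ and $v(0) = 0$, so the lemma applies again. The geometric series of corrections $(D'_k, p_k, q_k)$ converges, identifies $\nabla_{x'}\tilde u(0)$ together with the one-sided normal derivatives $\tilde p, \tilde q$ satisfying $a\tilde p - b\tilde q = 0$, and yields the bound \eqref{lr}. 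Combined with classical interior $C^{1,\gamma}$ estimates for fully nonlinear equations in each half-ball away from $\{x_n = 0\}$, this produces the full $C^{1,\gamma}(\overline{B_{1/2}^+}) \cap C^{1,\gamma}(\overline{B_{1/2}^-})$ estimate. The principal obstacle I anticipate is precisely the limit step in the contradiction argument: producing the linear-transmission expansion for $\tilde u_\infty$ without circularly invoking the $C^{1,\gamma}$ estimate being proved. Making the straightening/perturbation argument effective in the degenerate limits $a_\infty = 0$ or $b_\infty = 0$, and in the absence of homogeneity of $\mathcal F_\infty^\pm$, is where most of the technical work should concentrate.
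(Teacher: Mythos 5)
Your plan has two genuine gaps, and the second one is structural. First, the ``preliminary'' uniform $C^{0,\alpha}$ estimate up to $\{x_n=0\}$ does not follow from interior Krylov--Safonov in each half-ball ``combined with continuity across the interface'': continuity of each individual $\tilde u$ is qualitative, while compactness of the contradiction sequence requires \emph{equicontinuity} of the family at the interface, where the interior constants blow up. A boundary H\"older estimate in, say, $B^+_1$ would need a modulus for the trace of $\tilde u$ on $\{x_n=0\}$, which is exactly what is unknown; the only available information at the interface is the transmission condition, and it must be used (the paper does this by placing $\tilde u$, and differences of sub/supersolutions, in a Pucci-type transmission class $\mathcal S_{\lambda,\Lambda}$ and running a barrier/Harnack argument to get the $C^\alpha$ bound). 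Second, and more seriously, the limit step of your improvement-of-flatness lemma is circular as proposed. The limit $\tilde u_\infty$ of the contradiction sequence solves again a two-operator transmission problem of the same class \eqref{TP} (with limiting $a_\infty,b_\infty$), so the first-order expansion you need for it is precisely the statement being proved. The straightening $y=(x',(b_\infty/a_\infty)x_n)$ does not rescue this: it does not merge $\mathcal F^+_\infty$ and the transformed $\mathcal F^-_\infty$ into a single operator, so you are left with a fully nonlinear equation whose operator jumps across $\{x_n=0\}$ -- there is no classical boundary $C^{1,\gamma}$ or Schauder theory to invoke for such $x$-discontinuous fully nonlinear operators (that theory is exactly what Theorem \ref{lineareg} supplies); moreover the change of variables distorts the ellipticity constants by a factor $(a_\infty/b_\infty)^2$, so the resulting estimates are not universal as $b_\infty\to 0$, and treating only the exact degenerate limits as Neumann problems does not restore uniformity.

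For comparison, the paper avoids compactness altogether and exploits the translation invariance of \eqref{TP} in the $x'$ directions: it shows (Proposition \ref{regularize}) that the difference of a subsolution and a supersolution lies in the class $\underline{\mathcal S}_{\lambda/n,\Lambda}$, using sup/inf convolutions in $x'$, the ABP estimate, Dirichlet replacements in the two half-balls, and pointwise boundary $C^{1,\alpha}$ regularity; applied to difference quotients $\bigl(u(x+\eps e')-u(x)\bigr)/\eps$ and combined with the interior H\"older estimate for the class $\mathcal S$ (Theorem \ref{calpha}), this gives $C^{1,\alpha}$ regularity in the $x'$ directions, after which \eqref{lr} follows from one-phase boundary regularity in each half-ball with $C^{1,\alpha}$ boundary data, and \eqref{idapbq=0} is obtained by a short explicit barrier touching $\tilde u$ at $0$. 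If you want to keep a compactness/iteration scheme, you would first have to prove both the uniform interface H\"older estimate and a regularity statement for the limiting two-operator problem by independent means, at which point you have essentially reproduced the paper's Section 3.
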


Towards proving the theorem above, we introduce the following special classes of functions, in the spirit of \cite{CC}. From now on, since the parameters $a,b$ in the transmission condition are defined up to a multiplicative constant, and the problem is invariant under reflection with respect to $\{x_n=0\}$, we can assume without loss of generality that $a=1, 0 \leq b \leq 1.$

For $0 < \lambda \leq \Lambda$, and  $0 \leq b\leq 1,$ we denote by $\underline{\mathcal{S}}_{\lambda,\Lambda}$ the class of continuous functions $u$ in $B_1$ such that $$\mathcal M^+_{\lambda,\Lambda}(D^2u) \geq 0 \quad \text{in $B_1^+ \cup B_1^-,$}$$ and $u$ satisfies the condition $$(u_n)^+ - b (u_n)^- \geq 0, \quad  \hbox{on $B_1 \cap \{x_n=0\},$}$$ in the viscosity sense of Definition \ref{def_visc} (with comparison with test function touching $u$ by above).

Analogously, we denote by $\overline{\mathcal{S}}_{\lambda,\Lambda}$ the class of continuous functions $u$ in $B_1$ such that $$\mathcal M^-_{\lambda,\Lambda}(D^2u) \leq 0 \quad \text{in $B_1^+ \cup B_1^-,$}$$ and $u$ satisfies the condition $$(u_n)^+ - b (u_n)^- \leq 0, \quad  \hbox{on $B_1 \cap \{x_n=0\},$}$$ in the viscosity sense of Definition \ref{def_visc} (with comparison with test functions touching $u$ by below).

Finally we denote by $$\mathcal{S}_{\lambda,\Lambda}:= \underline{\mathcal{S}}_{\lambda,\Lambda} \cap \overline{\mathcal{S}}_{\lambda,\Lambda}.$$

First we prove the following H\"older regularity result.

\begin{thm}\label{calpha} Let $u \in \mathcal{S}_{\lambda,\Lambda}$ with $\|u\|_\infty \leq 1.$ Then $u \in C^{\alpha}(B_{1/2})$ for some $\alpha$ universal, and with a universal bound on the $C^\alpha$ norm.
\end{thm}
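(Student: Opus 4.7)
The plan is to adapt the Krylov--Safonov / Caffarelli--Cabr\'e scheme from \cite{CC} to the transmission setting. Concretely, I would first prove a weak Harnack inequality for nonnegative functions in $\overline{\mathcal{S}}_{\lambda,\Lambda}$: if $u\geq 0$ in $B_1$ and $u\in\overline{\mathcal{S}}_{\lambda,\Lambda}$, then there exist universal $\eps,C>0$ with
\[
\left(\frac{1}{|B_{1/2}|}\int_{B_{1/2}} u^{\eps}\right)^{1/\eps} \leq C \inf_{B_{1/2}} u.
\]
From this, Hölder continuity of $u\in\mathcal{S}_{\lambda,\Lambda}$ in $B_{1/2}$ is standard: apply the weak Harnack to $M_r-u$ and $u-m_r$ on nested balls $B_r$, $B_{r/2}$, where $M_r=\sup_{B_r}u$ and $m_r=\inf_{B_r}u$ (note $-u\in\underline{\mathcal{S}}$ iff $u\in\overline{\mathcal{S}}$ after renaming the parameter $b$), to get oscillation decay $\operatorname{osc}_{B_{r/2}} u \leq (1-\eta)\operatorname{osc}_{B_r} u$ and iterate.

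The weak Harnack will follow from the usual two ingredients: the ABP estimate and an $L^{\eps}$ decay-of-measure lemma produced by a barrier. The ABP is purely measure-theoretic and only sees the interior Pucci inequalities; since $\{x_n=0\}$ has Lebesgue measure zero, the analysis of the concave envelope and the contact set is unaffected, so the standard ABP bound in $B_1\setminus\{x_n=0\}$ transfers verbatim. The Calder\'on--Zygmund dyadic cube decomposition that upgrades the decay lemma to the $L^{\eps}$ bound is also oblivious to the interface. Thus the only step that actually requires new input is the construction of a barrier that is admissible as a function in $\underline{\mathcal{S}}_{\lambda,\Lambda}$ (or $\overline{\mathcal{S}}_{\lambda,\Lambda}$, depending on the side of the comparison).

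Here lies the main — but, I believe, easily handled — obstacle, and it is resolved by a simple observation: the classical Caffarelli--Cabr\'e barrier is radial. For any radial $\psi(|x|)\in C^{2}$ one has $\partial_n\psi=\psi'(|x|)\,x_n/|x|$, which vanishes identically on $\{x_n=0\}$. Consequently $\psi_n^+ - b\psi_n^- = 0$ on $\{x_n=0\}$, so both the subsolution and supersolution transmission conditions are satisfied trivially, and the standard radial barrier automatically belongs to $\underline{\mathcal{S}}_{\lambda,\Lambda}\cap\overline{\mathcal{S}}_{\lambda,\Lambda}$ in its domain. To run the comparison principle across the interface, I would slide the radial barrier until first contact with $u$: if the contact occurs in $B_1^+\cup B_1^-$, the standard Pucci comparison gives the contradiction; if it occurs on $\{x_n=0\}$, the barrier, being smooth and radial there, is a legitimate piecewise-$C^2$ test function in Definition \ref{def_visc} applied to $u\in\overline{\mathcal{S}}$, and the transmission condition for $u$ forces $\psi_n^+(x_0)-b\psi_n^-(x_0)\leq 0$; combined with the equality $\psi_n^+(x_0)=\psi_n^-(x_0)=0$ one sees this point behaves as an ordinary tangential contact, so a perturbation $\psi+\eta|x'-x_0'|^2$ pushes the contact into the interior and reduces back to the first case. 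Once the barrier argument is in place, the remainder of the proof is exactly \cite{CC}, and Theorem \ref{calpha} follows.
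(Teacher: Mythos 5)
Your reduction of H\"older continuity to a weak Harnack inequality is fine in outline, but both ingredients you treat as routine contain real gaps, and the second one is exactly the point of the whole theorem. First, the barrier step: a radial profile $\psi(|x|)$ has $\psi_n^+=\psi_n^-=0$ on $\{x_n=0\}$, so it satisfies the transmission inequalities only with \emph{equality}. When the sliding argument produces a first contact on $\{x_n=0\}$, the viscosity condition for $u\in\overline{\mathcal S}_{\lambda,\Lambda}$ then gives $0\le 0$ and no contradiction; and such interface contacts genuinely occur, since the class $\overline{\mathcal S}_{\lambda,\Lambda}$ contains functions with kinks along $\{x_n=0\}$ (any $p x_n^+-q x_n^-$ with $p-bq\le 0$), against which a radial barrier will first touch precisely on the interface. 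Your proposed perturbation $\psi+\eta|x'-x_0'|^2$ does not repair this: it leaves both one-sided normal derivatives unchanged (so the transmission condition is still only an equality), and it raises the test function in the tangential directions, so it neither produces a strict sign nor moves the contact point off $\{x_n=0\}$. The correct fix is to build strictness into the barrier itself, which is what the paper does: the radial part is centered at $\bar x=\tfrac15 e_n$ (so $\partial_n\Gamma^\gamma>0$ on $\{x_n=0\}$) and a term $\delta x_n^+$ is added, giving $(w_n)^+-b(w_n)^->0$ strictly; then contact on the interface is excluded outright and the comparison closes.

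Second, the claim that the ABP estimate ``transfers verbatim'' because $\{x_n=0\}$ has Lebesgue measure zero is not justified. What ABP needs to control is the measure of the \emph{gradient image} of the contact set, not the measure of the contact set itself, and the interface portion can contribute positive measure: for instance $u=-\tfrac65 x_n^+ +2x_n^-$ with $b=\tfrac12$ belongs to $\overline{\mathcal S}_{\lambda,\Lambda}$, and at every interface point it is touched from below by affine functions whose normal slopes fill the interval $[-2,-\tfrac65]$, while no interior Pucci inequality is available at those contact points. So the contact-set estimate at the heart of the Caffarelli--Cabr\'e proof does not go through unchanged and would require a genuinely new argument there. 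The paper sidesteps all of this machinery: Theorem \ref{calpha} is reduced to a single one-sided lemma (if $u(\tfrac15 e_n)>0$ then $u\ge -1+c$ in $B_{1/3}$), proved by the interior Harnack inequality applied in $B_{1/20}(\tfrac15 e_n)$, i.e.\ away from the interface, together with the strict barrier described above and the comparison principle; no ABP or Calder\'on--Zygmund decomposition is needed, and the oscillation decay then follows by applying the lemma to $u$ or $-u$.
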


The Theorem above immediately follows from the next Lemma.

\begin{lem} Let $u \in \mathcal{S}_{\lambda,\Lambda}$ with $\|u\|_\infty \leq 1.$ Assume that
\be\label{ubig} u(\frac{1}{5}e_n) >0.\ee Then, there exists a universal constant $c>0$ such that
$$u \geq -1+c \quad \text{in $B_{1/3}$}.$$
\end{lem}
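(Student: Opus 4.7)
The plan is to set $v := u + 1 \geq 0$ in $B_1$, observe that $v$ still belongs to $\mathcal{S}_{\lambda,\Lambda}$ (constants are annihilated by the Pucci operators and do not affect the transmission condition), and then establish a universal lower bound $v \geq c$ on $B_{1/3}$ by combining the classical interior Harnack inequality in the upper half-ball with a single-barrier comparison argument based at $e_n/5$.

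For the first ingredient, inside the open half-ball $B_1^+$ the transmission condition plays no role, so $v$ satisfies the interior Pucci inequalities in the viscosity sense. Since the ball $B_{1/5}(e_n/5)$ is compactly contained in $B_1^+$, $v \geq 0$, and $v(e_n/5) > 1$, the classical Caffarelli--Cabre interior Harnack inequality for the Pucci class yields a universal $c_1 > 0$ with $v \geq c_1$ on $\overline{B_{1/10}(e_n/5)}$.

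For the barrier, I would take the radial function $\psi(x) := |x - e_n/5|^{-\gamma}$ with universal exponent $\gamma$ chosen large enough that $\lambda(\gamma+1) > \Lambda(n-1)$. The eigenvalues of $D^2\psi$ are $\gamma(\gamma+1)|x-e_n/5|^{-\gamma-2}$ (one radial, positive) and $-\gamma|x-e_n/5|^{-\gamma-2}$ ($n-1$ tangential, negative), so the choice of $\gamma$ forces the strict inequality $\mathcal{M}^-_{\lambda,\Lambda}(D^2\psi) > 0$ on $\R^n \setminus \{e_n/5\}$. Moreover, the singularity lies in $\{x_n > 0\}$, so $\psi$ is smooth across $\{x_n = 0\}$, and there $\psi_n = (\gamma/5)|x - e_n/5|^{-\gamma-2} > 0$; combined with $b \leq 1$ this yields $\psi_n^+ - b\psi_n^- = (1-b)\psi_n \geq 0$, verifying the transmission inequality for $\underline{\mathcal{S}}_{\lambda,\Lambda}$. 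Then I rescale and translate to
\[
\phi(x) := A\Bigl(|x - e_n/5|^{-\gamma} - (11/20)^{-\gamma}\Bigr), \qquad A := \frac{c_1}{10^\gamma - (20/11)^\gamma},
\]
on the annular region $D := B_{3/4} \setminus \overline{B_{1/10}(e_n/5)}$. The constant $(11/20)^{-\gamma}$ makes $\phi \leq 0$ on $\partial B_{3/4}$ (since $|x - e_n/5| \geq 11/20$ there), while $A$ is chosen so that $\phi \leq c_1$ on $\partial B_{1/10}(e_n/5)$; combined with Step 1 and $v \geq 0$, this gives $\phi \leq v$ on $\partial D$.

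Applying the viscosity comparison principle in the transmission sense between the classical strict subsolution $\phi$ and the supersolution $v \in \overline{\mathcal{S}}_{\lambda,\Lambda}$ then yields $\phi \leq v$ throughout $D$. For any $x \in B_{1/3}$ one has $|x - e_n/5| \leq 1/3 + 1/5 = 8/15 < 11/20$, hence
\[
v(x) \geq \phi(x) \geq A\bigl[(15/8)^\gamma - (20/11)^\gamma\bigr] =: c > 0,
\]
a universal positive constant because $15/8 > 20/11$; together with Step 1 on the inner ball this gives $v \geq \min(c, c_1)$ on all of $B_{1/3}$, as desired. The main obstacle I anticipate is the rigorous verification of the comparison at interface points $\{x_n = 0\} \cap D$: for $b < 1$ the strict inequality $(1-b)\psi_n > 0$ directly rules out touching from below, but the critical case $b = 1$ demands a standard small perturbation of $\phi$ (for example adding $\varepsilon x_n^+$ and compensating with a constant) to produce a strict jump across the interface, after which one lets the parameter tend to zero.
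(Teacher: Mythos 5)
Your proposal is correct and follows essentially the same route as the paper: interior Harnack applied to $u+1$ near $\frac{1}{5}e_n$, then comparison on an annular region with the radial barrier $|x-\frac{1}{5}e_n|^{-\gamma}$, where the $\varepsilon x_n^+$ correction you describe for the case $b=1$ is exactly the paper's $\delta x_n^+$ term, introduced for precisely that reason. The only slip is cosmetic: $B_{1/5}(\frac{1}{5}e_n)$ is not compactly contained in $B_1^+$ (its closure touches $\{x_n=0\}$), but the open ball does lie in $B_1^+$, which is all the interior Harnack inequality requires.
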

\begin{proof} By Harnack inequality (see Theorem 4.3 in \cite{CC}) and assumption \eqref{ubig} we have that ($\bar x= \frac{1}{5}e_n$)$$u +1 > \tilde c \quad \text{in $B_{1/20}(\bar x)$}.$$

Let $$w=\eta( \Gamma^\gamma(|x-\bar x|)+ \delta x_n^+), \quad \Gamma^\gamma(|x-\bar x|)= |x-\bar x|^{-\gamma} - (2/3)^{-\gamma}$$ be defined in the closure of the annulus
$$D:=  B_{3/4}(\bar x) \setminus \overline{B}_{1/20}(\bar x)$$ with $\gamma > \max\{0, \frac{(n-1)\Lambda}{\lambda} -1\},$ and $\eta,\delta$ to be made precise later.
Since $\Gamma^\gamma (|x-\bar x|)$ is a radial function ($r=|x-\bar x|$), we find that in the appropriate system of coordinates,
$$D^2 w = \eta  \gamma r^{-\gamma-2} diag\{(\gamma+1), -1,\ldots, -1\}$$
then, in $D$
 \begin{align*}
\mathcal M^-_{\lambda,\Lambda}(D^2 w)
 =  \eta \gamma|x-\bar x|^{-\gamma-2}(\lambda(\gamma+1) -\Lambda(n-1))>0. \end{align*}
  Since
 $$\p_n \Gamma^\gamma |_{\{x_n=0\}}>0,$$ the transmission condition
$$ (w_n)^+ - b (w_n)^- > 0 \quad \text{on $x_n=0,$}$$ is satisfied.

Finally, notice that on $\p B_{3/4}(\bar x)$ we have that $w \leq 0$ as long as $\delta$ is chosen sufficiently small (universal). Also, we choose $\eta$ so that $$w \leq \tilde c \quad \text{on $\p B_{1/20}(\bar x),$}$$ that is $$\eta (\Gamma^\gamma(1/20) +\delta (\frac{1}{20} + \frac{1}{5})) \leq  \tilde c.$$

 Combining all the facts above we obtain that $$w \leq  u +1\quad \text{on $\p D$}$$
 and $w$ is a strict (classical) subsolution to the transmission problem in $D$. By the the definition of viscosity solution, we conclude that $$w \leq u +1\quad \text{in $D.$}$$

 Our desired statement now follows from the fact that $$w \geq c \quad \text{on $B_{1/3}$}$$
for $c$ universal.
\end{proof}

%\textbf{Remark.} The constant $c$ in the proof above depends on lower bound for $a$ and an upper bound on the ratio $b/a$. We do have such universal bounds for the linearized problem associated to our free boundary problem (in the model case $a \geq 1$ and $b/a \leq 1$ otherwise we have $G_0(\beta) \geq \gamma_0$ and $\beta^{-N}G_0(\beta)$ decreasing.)

\

Now, we wish to prove the following main result.

\begin{prop}\label{regularize} Let $u$ be a subsolution to \eqref{TP} in $B_1$ and let $v$ be a supersolution to \eqref{TP} in $B_1.$ Then $$u-v \in \underline{\mathcal{S}}_{\frac{\lambda}{n},\Lambda}.$$
\end{prop}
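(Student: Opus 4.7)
The proposition reduces to verifying two viscosity inequalities for $w := u - v$: the interior Pucci bound $\mathcal{M}^+_{\lambda/n,\Lambda}(D^2 w) \geq 0$ in $B_1^+ \cup B_1^-$, and the transmission inequality $(w_n)^+ - b(w_n)^- \geq 0$ on $B_1 \cap \{x_n=0\}$, both in the sense of Definition \ref{def_visc}. The interior part is classical: since every $\mathcal{F}^\pm \in \mathcal{E}(\lambda,\Lambda)$ satisfies the algebraic Pucci bound $\mathcal{F}^\pm(M) - \mathcal{F}^\pm(N) \leq \mathcal{M}^+_{\lambda/n,\Lambda}(M-N)$, the standard Crandall--Ishii / sup-convolution argument (see Chapter~5 of \cite{CC}) shows that $u - v$ is a viscosity subsolution of $\mathcal{M}^+_{\lambda/n,\Lambda}(D^2 \cdot) = 0$ in each open half-ball. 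So I would just invoke this step.

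For the transmission inequality, I would use the equivalent formulation (ii') and suppose that a function of the form $\psi(x) = P(x') + p x_n^+ - q x_n^-$ touches $w$ from above at some $x_0 \in B_1 \cap \{x_n=0\}$. After adding $\varepsilon|x-x_0|^2$ to $P$ and letting $\varepsilon \to 0$ at the end, I may take the contact to be strict on a small closed ball $\overline{B_\rho(x_0)}$. To decouple $u$ and $v$ I would run the standard doubling-variable argument, maximizing
$$\Phi_\delta(x,y) = u(x) - v(y) - \psi(x) - \tfrac{1}{2\delta}|x-y|^2$$
over $\overline{B_\rho(x_0)} \times \overline{B_\rho(x_0)}$. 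A maximizer $(x_\delta,y_\delta)$ satisfies $(x_\delta, y_\delta) \to (x_0, x_0)$ and $|x_\delta - y_\delta|^2/\delta \to 0$ as $\delta \to 0$; moreover, the function $\psi(x) + \tfrac{1}{2\delta}|x-y_\delta|^2$ touches $u$ from above at $x_\delta$, and the smooth quadratic $-\tfrac{1}{2\delta}|x_\delta - y|^2$ touches $v$ from below at $y_\delta$.

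The proof then concludes by a case analysis on the position of $x_\delta, y_\delta$ relative to $\{x_n=0\}$. In the decisive case $x_\delta, y_\delta \in \{x_n=0\}$ one has $x_{\delta,n} = y_{\delta,n} = 0$, so the penalty contributes no normal derivative, and the subsolution transmission condition applied to $u$ at $x_\delta$ yields $p - bq \geq 0$ at once. The purely interior cases (both $x_\delta, y_\delta$ in the same open half-ball) are ruled out by combining (i) with the strictness of the touching at $x_0 \in \{x_n=0\}$ for small $\delta$. The mixed configuration (one of $x_\delta, y_\delta$ on the interface, the other off) is handled by pairing the subsolution transmission condition for $u$ against the supersolution transmission/interior condition for $v$, exploiting that the error $(1-b)(x_{\delta,n}-y_{\delta,n})/\delta$ generated by the quadratic penalty appears with matched signs in the two resulting inequalities and thus cancels. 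I expect the chief technical obstacle to lie precisely in this mixed case, where one must carefully track the one-sided normal-derivative contributions of the penalty term; it is here that the normalization $0 \leq b \leq 1$ enters essentially, to guarantee that the cancellation has the correct sign.
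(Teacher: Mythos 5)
Your interior step and the general set-up (reduction to testing the transmission inequality via (ii'), strict touching, convergence of the doubled maximizers) are fine, but the heart of the argument — the case analysis for the doubled maximum — has a genuine gap, and it is exactly the gap that explains why the paper does not use doubling of variables at all. The quadratic penalty contributes the normal slope $(x_{\delta,n}-y_{\delta,n})/\delta$ to the test functions at the interface, and this quantity is \emph{not} controlled: the standard estimate only gives $|x_\delta-y_\delta|^2/\delta\to 0$, so $|x_\delta-y_\delta|/\delta$ may blow up. In the mixed configuration with $x_\delta\in\{x_n=0\}$ and $y_\delta$ in the open \emph{lower} half-ball, the subsolution condition for $u$ gives $p-bq+(1-b)(x_{\delta,n}-y_{\delta,n})/\delta\geq 0$ with an error term of the \emph{unfavorable} sign, while the point $y_\delta$ only yields the interior inequality $\mathcal F^-\bigl(-\tfrac1\delta I\bigr)\leq 0$, which is vacuous and carries no slope information; there is nothing to cancel against, contrary to your claim of "matched signs". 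Similarly, the purely interior cases are not ruled out by strictness of the touching at $x_0$: for each fixed $\delta$ the maximizers can sit off the interface, and to exclude interior touching one needs the comparison function to be a strict supersolution of $\mathcal M^+_{\lambda/n,\Lambda}$ in the half-balls — note that your perturbation $+\varepsilon|x-x_0|^2$ pushes the Hessian in the wrong direction, and that $D^2 P$ has no sign to begin with (the paper explicitly modifies $p,q$ and allows quadratic dependence on $x_n$ to force $\mathcal M^+(D^2P)<0$). Even with that fix, the configuration $x_\delta\in B^+$, $y_\delta\in B^-$ cannot be treated by the theorem of sums, since $u$ and $v$ then solve different equations at points on opposite sides of the interface. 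This failure of naive doubling for Neumann/transmission-type conditions is classical.

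The paper's route is designed precisely to avoid these issues: it regularizes $u$ and $v$ by sup/inf-convolutions \emph{only in the tangential directions} $x'$ (so the transmission condition is not corrupted), uses an ABP/Jensen measure argument to slide the test polynomial to a contact point $y_\eps\in\{x_n=0\}$ where both envelopes are twice pointwise differentiable in $x'$, replaces them by the Dirichlet solutions $\bar u^\eps,\bar v_\eps$ of $\mathcal F^\pm=0$ in the half-balls, and invokes pointwise boundary $C^{1,\alpha}$ regularity (Proposition \ref{BPR}) to produce genuine one-sided normal derivatives there; a final lemma then shows these classical one-sided derivatives themselves satisfy the transmission inequalities for $u$ and for $v$, which are subtracted to conclude. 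If you want to salvage a doubling-type proof you would have to build penalty functions adapted to the transmission condition (so that their normal slopes at $\{x_n=0\}$ have the correct one-sided signs for both $u$ and $v$), in the spirit of the viscosity theory for Neumann boundary conditions; the naive $\tfrac1{2\delta}|x-y|^2$ does not do this.
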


\begin{cor}\label{diffquot}Let $u$ be a viscosity solution to \eqref{TP} then for any unit vector $e'$ in the $x'$ direction, $$\frac{u(x+\eps e') - u(x)}{\eps} \in \mathcal{S}_{\frac{\lambda}{n},\Lambda}.$$\end{cor}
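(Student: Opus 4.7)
\textbf{Proof plan for Corollary \ref{diffquot}.} The strategy is to reduce the statement directly to Proposition \ref{regularize}, exploiting the translation invariance of \eqref{TP} in the $x'$-variables.

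First, observe that the operators $\mathcal{F}^\pm$ and the transmission coefficients $a,b$ are all independent of $x$, and the interface $\{x_n=0\}$ is invariant under translations of the form $x\mapsto x+\epsilon e'$ with $e'\perp e_n$. Consequently, if $u$ solves \eqref{TP} in $B_1$, then the translate $u_\epsilon(x):=u(x+\epsilon e')$ solves \eqref{TP} on $B_{1-|\epsilon|}$, and in particular $u_\epsilon$ is both a viscosity sub- and super-solution of \eqref{TP} on that smaller ball.

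Now I would apply Proposition \ref{regularize} twice on $B_{1-|\epsilon|}$: with the pair $(u_\epsilon,u)$ (subsolution, supersolution) to deduce that
\[
u_\epsilon-u\in \underline{\mathcal{S}}_{\lambda/n,\Lambda},
\]
and then with the pair $(u,u_\epsilon)$ to deduce that $u-u_\epsilon\in \underline{\mathcal{S}}_{\lambda/n,\Lambda}$; unwinding the definitions, this last statement is equivalent to $u_\epsilon-u\in \overline{\mathcal{S}}_{\lambda/n,\Lambda}$. Hence $u_\epsilon-u\in\mathcal{S}_{\lambda/n,\Lambda}$.

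To conclude, I would invoke the positive homogeneity of the structures involved: for $t>0$ one has $\mathcal{M}^\pm(tM)=t\mathcal{M}^\pm(M)$, so the interior Pucci inequalities are preserved under multiplication by $1/\epsilon>0$; the boundary condition $(w_n)^+-b(w_n)^-\gtrless 0$ is linear in $w$ and therefore also preserved. Thus $(u_\epsilon-u)/\epsilon\in\mathcal{S}_{\lambda/n,\Lambda}$ for $\epsilon>0$, and the case $\epsilon<0$ is obtained from the case $\epsilon>0$ by replacing $e'$ with $-e'$. The only real point to check is that Proposition \ref{regularize} applies to the translated function, which is immediate from the translation invariance of the problem; everything else is formal manipulation.
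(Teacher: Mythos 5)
Your argument is correct and is exactly the intended one: the paper derives Corollary \ref{diffquot} immediately from Proposition \ref{regularize} by noting that the $x'$-translate of a solution is again a solution, applying the proposition to both ordered pairs, and using the positive $1$-homogeneity of the Pucci operators and the linearity of the transmission condition to divide by $\eps$. The only (harmless) caveat is the slight shrinking of the domain to $B_{1-|\eps|}$, which you already note.
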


In view of Theorem \ref{calpha} and the Corollary above we obtain by standard arguments (see Chapter 5 in \cite{CC}) the following result.

\begin{thm}Let $u$ be a viscosity solution to \eqref{TP} with $\|u\|_\infty \leq 1.$ Then, for some $\alpha$ universal,  $u \in C^{1,\alpha}$ in the $x'$-direction in $B_{3/4}$ with $C^{1,\alpha}$ norm bounded by a universal constant.\end{thm}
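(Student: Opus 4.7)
The plan is to deduce the theorem by combining Theorem \ref{calpha} and Corollary \ref{diffquot} via the classical Caffarelli--Cabr\'e iteration scheme (Chapter 5 of \cite{CC}). Fix a unit vector $e' \perp e_n$. By Corollary \ref{diffquot}, for every $h\neq 0$ the tangential difference quotient
\[
\Delta_h u(x):=\frac{u(x+he')-u(x)}{h}
\]
belongs to $\mathcal{S}_{\lambda/n,\Lambda}$ in $B_{1-|h|}$. Since all the operations used below (tangential translations, rescaling, dilation by positive scalars) preserve both the ellipticity class and the interface $\{x_n=0\}$, all relevant structure of the transmission problem is untouched. Consequently, the argument reduces to obtaining a uniform $L^\infty$ bound $\|\Delta_h u\|_{L^\infty(B_{7/8})}\le C$, because once this holds, Theorem \ref{calpha} gives uniform $C^\alpha$ estimates on $\{\Delta_h u\}$, and then Arzel\`a--Ascoli produces a tangential derivative $u_{e'}\in C^{\alpha}(B_{3/4})$ as $h\to 0$, which is exactly $C^{1,\alpha}$ regularity in the $e'$-direction.

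To obtain the uniform $L^\infty$ bound we bootstrap. From Theorem \ref{calpha} applied to $u$ itself we start with $|u(x+he')-u(x)|\le C|h|^{\alpha}$. Hence $w_h:=|h|^{-\alpha}(u(\cdot+he')-u)$ lies in $\mathcal{S}_{\lambda/n,\Lambda}$ with uniform $L^\infty$ bound in a slightly smaller ball, and Theorem \ref{calpha} applied to $w_h$ yields
\[
\bigl|u(x+he')-u(x)-u(y+he')+u(y)\bigr|\le C|h|^{\alpha}|x-y|^{\alpha}
\]
in $B_{3/4}$. Restricting $y=x-he'$ and telescoping along the $e'$-line upgrades the tangential modulus of continuity of $u$ from $|h|^{\alpha}$ to $|h|^{\min(2\alpha,1)}$. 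Iterating finitely many times, the tangential Hölder exponent saturates at $1$, giving uniform tangential Lipschitz regularity; one final application of Theorem \ref{calpha} to the (now uniformly bounded) incremental quotients $\Delta_h u\in\mathcal{S}_{\lambda/n,\Lambda}$ yields the claimed $C^{1,\alpha}$ estimate in the $e'$-direction.

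The main point where one has to be careful is to verify that each rescaled incremental quotient genuinely remains in the class $\mathcal{S}_{\lambda/n,\Lambda}$: this is where Proposition \ref{regularize} (via Corollary \ref{diffquot}) is essential, together with the fact that translating $u$ by a purely tangential vector leaves both interior equations $\mathcal{F}^{\pm}(D^2u)=0$ and the transmission condition $a(u_n)^+ - b(u_n)^-=0$ formally unchanged. Because we claim regularity only in the tangential direction $x'$, no control is needed across $\{x_n=0\}$ beyond the viscosity transmission inequalities encoded in Definition \ref{def_visc}, so the bootstrap runs identically to the purely interior case in \cite{CC}; the only real work is notational, keeping track of shrinking balls at each step and of the universality of all constants.
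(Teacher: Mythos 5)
Your proposal is correct and follows essentially the same route as the paper, which proves this theorem precisely by invoking the ``standard arguments'' of Chapter 5 in \cite{CC} applied to Theorem \ref{calpha} and Corollary \ref{diffquot}; your write-up simply makes that classical incremental-quotient bootstrap explicit (modulo the routine care needed when the iterated exponent hits exactly $1$, which the cited scheme handles in the usual way).
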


The desired estimate \eqref{lr} in Theorem \ref{lineareg} now follows from the corollary above and the (boundary) regularity theory for fully nonlinear uniformly elliptic equations (for the regularity result up to the boundary see \cite{MW} or the Appendix in \cite{MS}.)

We show below how to obtain \eqref{idapbq=0}, which concludes the proof of Theorem \ref{lineareg}.

\

\textit{Proof of \eqref{idapbq=0}.}  Let us prove that $a \tilde p - b \tilde q \leq 0.$ (The other inequality follows similarly.) Without loss of generality (after subtracting a linear function), we can assume that ($r$ small)
\be\label{stima}|\tilde u - (\tilde px_n^+ - \tilde q x_n^-)| \leq \tilde C |x|^{1+\gamma}, \quad |x| \leq r.\ee
For any $\delta >0$ small,  we define,
$$w_\delta(x) = C(-|x|^2+K x_n^2) - \delta |x_n|+ \tilde p x_n^+ - \tilde q x_n^-,$$
 where
 $$r^\gamma = \frac{1}{2}\frac{\delta}{\tilde CK}, \quad C= \tilde C r ^{\gamma -1}, $$
and $K=K(n,\lambda,\Lambda)$
is chosen large enough so that
\be\label{almost}\mathcal M^-(D^2 (-|x|^2+ K x_n^2))= \frac{\lambda}{n}2(K-1) -2\Lambda n > 0.\ee

Then, using \eqref{stima} it is easy to verify that
$$w_\delta < \tilde u, \quad \text{on $\p B_r$}.$$

Let $$m = \min_{\overline B_r}(\tilde u - w_\delta)=(\tilde u - w_\delta)(x_0), \quad x_0 \in \overline B_r.$$

In view of \eqref{almost}, the minimum cannot occur in the interior $B_r \cap \{x_n \neq 0.\}$
Also, since $(\tilde u - w_\delta)(0)=0,$ we have $m \leq 0$ and hence the minimum cannot occur on $\p B_r$.
Thus, $x_0$ occurs on $\{x_n=0\}$, then $w_\delta +m$ touches $\tilde u$ by below at $x_0$ and by definition
\be \label{quasi} a (\tilde p -\delta) - b(\tilde q + \delta) \leq 0.\ee
The conclusion follows by letting $\delta \to 0$ in \eqref{quasi}.
\qed

\

We are now left with the proof of our main Proposition \ref{regularize}. First, we remark that
in the proof of this proposition we will need a pointwise boundary regularity result of the following type (see \cite{MW}).

\begin{prop} \label{BPR}Let $u$ satisfy ($\mathcal F \in \mathcal E(\lambda,\Lambda)$)$$\mathcal F(D^2u)=f \quad \text{in $B^+_1$}, \quad u(x',0)=\varphi(x') \quad \text{on $B_1 \cap \{x_n=0\}$}$$ with $\varphi$ pointwise $C^{1,\alpha}$ at $0$ and $f \in L^\infty(\overline{B_1^+})$. Then $u$ is pointwise $C^{1,\alpha}$ at $0$,  that is there exists a linear function $L_u$ such that for all $r$ small $$|u - L_u| \leq C r^{1+\alpha}, \quad \text{in $\overline{B_r^+(0)}$}$$ with $C$ depending on $n,\lambda, \Lambda, \|f\|_\infty$ and the pointwise $C^{1,\alpha}$ bound on $\varphi$.
\end{prop}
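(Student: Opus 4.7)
The plan is to prove Proposition \ref{BPR} by a Caffarelli-type iterative compactness argument, upgrading the pointwise $C^{1,\alpha}$ information on $\varphi$ at $0$ to pointwise $C^{1,\alpha}$ information on $u$ at the flat boundary point.

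First I would \emph{normalize}. Subtract $u(0)$ and the affine extension $\varphi(0)+\nabla_{x'}\varphi(0)\cdot x'$ from $u$; since its Hessian vanishes and $\mathcal{F}(0)=0$, the equation and right-hand side are unchanged. Hence I may assume $u(0)=0$, $\varphi(0)=0$, $\nabla_{x'}\varphi(0)=0$, and $|\varphi(x')|\leq M|x'|^{1+\alpha}$ with $M$ the pointwise $C^{1,\alpha}$ bound on $\varphi$. A further multiplicative normalization by a universal constant times $\|u\|_\infty+M+\|f\|_\infty$ lets me assume $\|u\|_\infty\leq 1$, $\|f\|_\infty\leq \varepsilon_0$, and $M\leq \varepsilon_0$, with $\varepsilon_0>0$ a small universal constant to be chosen.

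The \emph{key improvement-of-flatness lemma} I would prove is the following. Fix $\alpha\in(0,\alpha_0)$, where $\alpha_0$ is the universal boundary regularity exponent for the homogeneous Dirichlet problem with flat boundary (supplied by \cite{MW}). There exist universal $\rho\in(0,1/2)$, $C^*>0$, and $\varepsilon_0>0$ such that whenever $u,f,\varphi$ satisfy the above normalized hypotheses with $\mathcal F\in\mathcal E(\lambda,\Lambda)$, there is a slope $p\in\mathbb{R}$ with $|p|\leq C^*$ satisfying
$$|u(x)-p\,x_n|\leq \rho^{1+\alpha}\quad\text{for } x\in\overline{B_\rho^+}.$$
The proof is by contradiction and compactness. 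If it fails, take sequences $\mathcal F_k\in\mathcal E(\lambda,\Lambda)$, $\|f_k\|_\infty\to 0$, $\|\varphi_k\|_\infty\to 0$, and solutions $u_k$ with $\|u_k\|_\infty\leq 1$ admitting no allowable slope. By uniform ellipticity, $\mathcal F_k\to\mathcal F^\ast\in\mathcal E(\lambda,\Lambda)$ uniformly on compact sets of matrices along a subsequence. Interior $C^\alpha$ estimates \cite{CC} together with a standard boundary H\"older estimate for fully nonlinear equations with continuous Dirichlet data give uniform equicontinuity of $\{u_k\}$ on $\overline{B_{3/4}^+}$, so (up to a subsequence) $u_k\to u_\infty$ uniformly. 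Then $u_\infty$ solves $\mathcal F^\ast(D^2u_\infty)=0$ in $B_1^+$ with zero Dirichlet data on the flat boundary. The known boundary $C^{1,\alpha_0}$ regularity for such problems (\cite{MW}, or the Appendix of \cite{MS}) provides $p_\infty$ with $|u_\infty(x)-p_\infty x_n|\leq C|x|^{1+\alpha_0}$ near $0$. Choosing $\rho$ so that $C\rho^{1+\alpha_0}<\tfrac12\rho^{1+\alpha}$ contradicts the failure of the lemma for large $k$.

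Next comes the \emph{iteration}. Set $u_1(y):=\rho^{-(1+\alpha)}(u(\rho y)-p\rho y_n)$, so $\|u_1\|_\infty\leq 1$ on $B_1^+$. A direct computation shows $u_1$ solves $\widetilde{\mathcal F}(D^2u_1)=\widetilde f$ in $B_1^+$ with $\widetilde{\mathcal F}(M):=\rho^{1-\alpha}\mathcal F(\rho^{\alpha-1}M)\in\mathcal E(\lambda,\Lambda)$ and $\widetilde f(y):=\rho^{1-\alpha}f(\rho y)$, so $\|\widetilde f\|_\infty\leq\rho^{1-\alpha}\varepsilon_0\leq\varepsilon_0$. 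The rescaled boundary data $\varphi_1(y'):=\rho^{-(1+\alpha)}\varphi(\rho y')$ still satisfies $|\varphi_1(y')|\leq M|y'|^{1+\alpha}$. Thus the normalized hypotheses are preserved, and I can apply the lemma again to $u_1$. Iterating produces slopes $p_j$ with $|p_j|\leq C^*$ and
$$\Bigl|u(x)-\bigl(p_1+p_2\rho^\alpha+\cdots+p_k\rho^{(k-1)\alpha}\bigr)x_n\Bigr|\leq\rho^{k(1+\alpha)},\quad |x|\leq\rho^k.$$
The series $P_\infty:=\sum_{j\geq 1}p_j\rho^{(j-1)\alpha}$ converges geometrically, and a standard interpolation between consecutive scales $\rho^{k+1}\leq r\leq\rho^k$ gives $|u(x)-P_\infty x_n|\leq Cr^{1+\alpha}$ on $\overline{B_r^+}$; unwinding the initial normalization recovers the full linear function $L_u$ and the dependence of the constant on $n,\lambda,\Lambda,\|f\|_\infty$, and $M$.

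The \emph{main obstacle} is the boundary $C^{1,\alpha_0}$ estimate for the limit homogeneous equation $\mathcal F^\ast(D^2v)=0$ in $B_1^+$ with $v=0$ on the flat boundary; this is the nontrivial analytic input we import from \cite{MW} and \cite{MS}, and the rest is bootstrap. The strict inequality $\alpha<\alpha_0$ is essential: it creates the multiplicative gap between $C\rho^{1+\alpha_0}$ and $\tfrac12\rho^{1+\alpha}$ that closes the contradiction step and makes the geometric decay in the iteration self-improving.
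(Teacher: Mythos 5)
Your proposal is correct, but it is not the route the paper takes: the paper does not prove Proposition \ref{BPR} at all, it simply imports it from the boundary first-order derivative estimates of \cite{MW} (see also the Appendix of \cite{MS}). What you do instead is reduce the full statement (bounded right-hand side, boundary data controlled only pointwise at the origin) to the model case -- homogeneous equation, zero data on a flat boundary -- via a Caffarelli-type improvement-of-flatness scheme, and only that model estimate is taken from \cite{MW}/\cite{MS}. Your normalization, the compactness lemma, the rescaling $\widetilde{\mathcal F}(M)=\rho^{1-\alpha}\mathcal F(\rho^{\alpha-1}M)\in\mathcal E(\lambda,\Lambda)$ with $\|\widetilde f\|_\infty\leq\rho^{1-\alpha}\varepsilon_0$, and the geometric summation of slopes are all sound, and you correctly isolate the one genuine restriction, $\alpha<\alpha_0$, which is implicit in the paper's use of the proposition (there the data is pointwise $C^{1,1}$ and the conclusion is stated for the universal exponent). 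Two small points to tighten: in the contradiction step the data $\varphi_k$ has no uniform modulus of continuity away from the origin, so the equicontinuity up to $\{x_n=0\}$ should be argued from the fact that $\|\varphi_k\|_{L^\infty}\leq M_k\to 0$ (oscillation of the data tends to zero uniformly, which the standard barrier estimate converts into an asymptotic equicontinuity sufficient for Arzel\`a--Ascoli), rather than from ``continuous Dirichlet data''; and the constant you produce inevitably depends also on $\|u\|_\infty$, which the paper's statement leaves implicit. What each approach buys: the paper's citation is shorter; your argument is self-contained modulo the flat model case, makes the exponent restriction explicit, and shows directly how merely pointwise regular data and an $L^\infty$ right-hand side are absorbed by the iteration.
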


\begin{rem}\label{rem} Clearly, from the proposition above and the interior regularity estimates it follows that regularity up to the boundary holds also for a problem with right-hand side as used in Lemma \ref{normalize}.
\end{rem}

\
 Towards the proof of Proposition \ref{regularize},  we now introduce the following regularizations. Given a continuous function $u$ in $B_1$ and an arbitrary ball $B_\rho$ with $\overline{B}_\rho \subset B_1$ we define for $\eps>0$ the upper $\eps$-envelope of $u$ in the $x'$-direction,
$$u^\eps(y',y_n) = \sup_{x \in \overline{B}_\rho \cap\{x_n =y_n\}}\{u(x',y_n)  - \frac{1}{\eps}|x'-y'|^2\}, \quad y=(y',y_n) \in B_\rho.$$

The proof of the following facts is standard (see \cite{CC}):

(1) $u^\eps \in C(B_\rho)$ and $u_\eps \to u$ uniformly in $B_\rho$ as $\eps \to 0.$

(2) $u^\eps$ is $C^{1,1}$ in the $x'$-direction by below in $B_\rho$. Thus, $u^\eps$ is pointwise second order differentiable in the $x'$-direction at almost every point in $B_\rho.$

(3) If $u$ is a viscosity subsolution to \eqref{TP} in $B_1$ and $\overline{B}_r \subset B_\rho$, then for $\eps \leq \eps_0$ ($\eps_0$ depending on $u, \rho, r$) $u^\eps$ is a viscosity subsolution to \eqref{TP} in $B_r.$ This fact follows from the obvious remark that the maximum of solutions of \eqref{TP}  is a viscosity subsolution.

Analogously we can define $u_\eps$, the lower $\eps$-envelope of $u$ in the $x'$-direction which enjoys the corresponding properties.

\

We are now ready  to prove our main proposition.

\

\textit{Proof of Proposition \ref{regularize}.} In what follows, for notational simplicity, we omit the dependence of the Pucci operators from $\lambda/n, \Lambda.$

By Theorem 5.3 in \cite{CC} we only need to show that the free boundary condition is satisfied in the viscosity sense.
Let $$\varphi(x) = P(x') + px_n^+-qx_n^-$$ touch $u-v$ by above at a point $x_0 \in \{x_n=0\},$ with $P$ a quadratic polynomial. Assume by contradiction that $$p - bq < 0.$$ Without loss of generality we can assume that $\varphi$ touches $w:=u-v$ strictly and also that $\mathcal M^+(D^2P) < 0$ (by modifying $p, q$, allowing quadratic dependence on $x_n$ and possibly restricting the neighborhood around $x_0$). Let us say that on the annulus $\overline B_{2\delta}(x_0) \setminus B_{\delta/2}(x_0)$, $\varphi -w \geq \eta >0.$
Now, since $w_\eps:=u^\eps-v_\eps$ converges to $u-v$ uniformly, and $\varphi$ touches $w$ strictly by above, for $\eps$ small enough we have that (up to adding a small constant) $\varphi$ touches $w_\eps$ at some $x_\eps$ by above and say $\varphi - w_\eps \geq \eta/2$ on $\p B_{\delta}(x_\eps)$. By property (3) above and the fact that $\mathcal M^+(D^2P) < 0$ we get from the comparison principle that $x_\eps \in \{x_n=0\}$.

Now, call
$$\psi= \varphi - w_\eps - \eta/2$$
Since $\psi \geq 0$ on $\p B_{\delta}(x_\eps)$ and
$\psi(x_\eps) < 0$ we obtain by ABP estimates (see Lemma 3.5 in \cite{CC}) that the set of points in $B_\delta(x_\eps) \cap \{x_n =0\}$ where $\psi$ admits a touching plane $l(x')$, of slope less than some arbitrarily small number, by below in the $x'$-direction is a set of positive measure. We choose the slope of $l$ small enough so that $\bar \varphi= \varphi-l -\eta/2$ is above $w_\eps$ on $\p B_{\delta}(x_\eps)$ and hence in the interior.
By property (2) above we can then conclude that $\bar\varphi$ touches $w_\eps$ by above at some point $y_\eps \in \{x_n=0\}$ where $u^\eps$ and $v_\eps$ are twice pointwise differentiable in the $x'$-direction.

Now call $\bar u^\eps$ (resp. $\bar v_\eps$) the solution to $\mathcal F^\pm(D^2 w)=0$ in $B_\delta^\pm(x_\eps)$ with $w=u^\eps$ (resp. $v_\eps$) on $\p B^\pm_\delta(x_\eps).$ Also call $\bar w_\eps = \bar u^\eps - \bar v_\eps.$ Since (by Theorem 5.3 in \cite{CC}) $\mathcal M^+(D^2\bar w_\eps) \geq0$ in $B_\delta^\pm(x_\eps)$ and $\mathcal M^+(D^2\bar \varphi) < 0$ in $B_\delta^\pm(x_\eps)$ with $\bar \varphi \geq \bar w_\eps=w_\eps$ on the boundary (recall that $l$ is below $\psi$ on $\{x_n=0\}$), we conclude that $\bar \varphi$ is above $\bar w_\eps$ also in the interior and therefore it touches it by above at $y_\eps.$

Since the boundary data is twice pointwise differentiable at $y_\eps$, thus in particular it is pointwise $C^{1,\alpha}$ we conclude by pointwise $C^{1,\alpha}$ regularity that $\bar u^\eps$ is $C^{1,\alpha}$ up to $y_\eps$ that is, there exist linear functions $L_u,L_v$ such that for all $r$ small $$|\bar u^\eps - L_u| \leq C r^{1+\alpha}, \quad \text{in $B_r^+(y_\eps)$,}$$
$$|\bar v_\eps - L_v| \leq C r^{1+\alpha}, \quad \text{in $B_r^+(y_\eps)$.}$$
%Assume that the linear part of $L_u$ in $x_n$ is given by $p_u^+ x_n^+- q_u^+ x_n^-$ (and analogously for $L_v$).
%$p_u, q_u$ (resp. $p_v,q_v$) the coefficients of $x_n$ in $L_u^\pm.$
Since $\bar \varphi$ touches $\bar w_\eps$ by above at $y_\eps$ we get that:
$$p \geq p^+_u-p^+_v$$ where
$$p^+_u = (\bar u^\eps)^+_n(y_\eps), \quad p^+_v = (\bar v_\eps)^+_n(y_\eps).$$
Arguing similarly in $B_r^-(y_\eps)$ we also get,
$$q\leq q^-_u-q^-_v,$$
where
$$q^-_u = (\bar u^\eps)^-_n(y_\eps), \quad q^-_v = (\bar v_\eps)^-_n(y_\eps).$$
We therefore contradict the fact that $p-bq<0$ if we show that
$$p^+_u - bq^-_u \geq 0, \quad p^+_v - bq^-_v \leq 0.$$ Since the replacement $\bar u^\eps$ (resp. $\bar v_\eps$) is still a viscosity subsolution (resp. supersolution) to the boundary condition, the inequalities above follow from the next Lemma. Thus our proof is concluded. \qed

\begin{lem} Let $u$ be a viscosity solution to  ($0 \leq b \leq 1$) \begin{equation}  \label{TPs}
\left\{
\begin{array}{ll}
\mathcal F^\pm(D^2u) = 0, & \hbox{in $B_1^\pm$} \\
\  &  \\
u^+_{n} - b u^-_{n}\geq 0, & \hbox{on $B_1 \cap \{x_n=0\}.$} \\
&
\end{array}%
\right.
\end{equation} Assume that $u$ is twice differentiable at zero in the $x'$-direction. Then, $u$ is differentiable at $0$ and
$$ u_{n}^+(0) - b u_{n}^-(0) \geq 0.$$
 \end{lem}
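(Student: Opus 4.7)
The plan is to first use Proposition~\ref{BPR} to identify the one-sided normal derivatives of $u$ at $0$, and then to derive the viscosity inequality by a touching argument with a strict-supersolution test function.

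Since $u$ is twice differentiable at $0$ in the $x'$-direction, the Dirichlet data $u(\cdot, 0)$ is pointwise $C^{1,\alpha}$ at $0$ for every $\alpha \in (0,1)$. Applying Proposition~\ref{BPR} separately to $u|_{B_1^+}$ and $u|_{B_1^-}$ yields linear functions
$$L^\pm(x) = u(0) + \nabla_{x'}u(0) \cdot x' + p^\pm x_n$$
with $|u(x) - L^\pm(x)| \leq C|x|^{1+\alpha}$ in $\overline{B_r^\pm}$ for every $r$ below some universal radius. This already establishes the existence of the one-sided normal derivatives $u_n^\pm(0) = p^\pm$, which is the differentiability claim.

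To prove $p^+ - b p^- \geq 0$ I would argue by contradiction. Suppose $p^+ - b p^- < 0$ and fix $\delta > 0$ small enough that $(p^+ + \delta) - b(p^- - \delta) < 0$. I would then build a test function
$$\varphi_\eta(x) = u(0) + \nabla_{x'}u(0)\cdot x' + \tfrac{1}{2}\langle Qx', x'\rangle + M|x'|^2 - C|x|^2 + \delta|x_n| + p^+ x_n^+ - p^- x_n^- + \eta,$$
where $Q$ is the $x'$-Hessian of $u$ at $0$ and $M$, $C$ are large positive constants. A direct computation shows that at every $x_0 = (x', 0) \in \{x_n = 0\}$ the one-sided normal derivatives of $\varphi_\eta$ are $p^+ + \delta$ and $p^- - \delta$ respectively, and that the Pucci computation gives $\mathcal{M}^+_{\lambda/n,\Lambda}(D^2\varphi_\eta) < 0$ in $B_\rho^\pm$ for $C$ sufficiently large, so $\varphi_\eta$ is a strict supersolution on each half-ball. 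A sliding argument in $\eta$ produces a first-contact value $\eta^*$ at which $\varphi_{\eta^*}$ touches $u$ from above at some point $x^* \in \overline{B_\rho}$. The strict supersolution property, combined with the viscosity subsolution property of $u$, rules out contact in the open half-balls $B_\rho^+\cup B_\rho^-$: a local minimum of $\varphi_{\eta^*} - u$ there would force $\mathcal F^\pm(D^2 \varphi_{\eta^*}) \geq 0$, contradicting the strict inequality above. Once we show $x^* \in \{x_n = 0\} \cap B_\rho$, the viscosity boundary condition at $x^*$ yields $(p^+ + \delta) - b(p^- - \delta) \geq 0$, contradicting the choice of $\delta$; letting $\delta \to 0$ then gives the lemma.

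The main obstacle will be excluding contact on $\partial B_\rho$: the $C^{1,\alpha}$ error $O(|x|^{1+\alpha})$ is actually of lower order than the quadratic barrier $M|x|^2$ near the origin, since $1+\alpha < 2$, so the naive balance fails for small $\rho$. I would close this gap by invoking the refined asymptotic
$$\bigl|u(x) - L^\pm(x) - \tfrac{1}{2}\langle Qx',x'\rangle\bigr| = o(|x|^{1+\alpha}), \qquad x \in B_r^\pm,$$
obtained by a second application of Proposition~\ref{BPR} to $v := u - L^\pm - \tfrac{1}{2}\langle Q x', x'\rangle$, whose boundary data vanishes to order $o(|x'|^2)$ at $0$; calibrating $M$, $C$, and $\rho$ against the resulting improved modulus of continuity then forces $x^*$ onto $\{x_n = 0\}$, at which point the conclusion follows as above.
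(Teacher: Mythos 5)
The first half of your argument (two one-sided applications of Proposition \ref{BPR} give the expansions $u=L^\pm+O(|x|^{1+\alpha})$ in $\overline{B_r^\pm}$, hence differentiability at $0$) is correct and is exactly the paper's first step. The gap is in excluding contact on $\partial B_\rho$, and the repair you propose does not close it. First, the refined asymptotic $|u-L^\pm-\tfrac12\langle Qx',x'\rangle|=o(|x|^{1+\alpha})$ is not what ``a second application of Proposition \ref{BPR}'' yields: the function $v=u-L^\pm-\tfrac12\langle Qx',x'\rangle$ solves a uniformly elliptic equation whose right-hand side is the fixed constant $-\mathcal F^\pm(Q)$, and the proposition returns the same $O(|x|^{1+\alpha})$ bound, with a constant depending on $\|f\|_\infty$ and the $C^{1,\alpha}$ bound of the data; upgrading to a little-o would require a quantified dependence of that constant on the smallness of the data plus a scaling/iteration argument you do not provide. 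Second, and decisively, even granting the little-o the obstacle persists: with $M,C,\delta$ fixed, the margin your barrier produces on $\partial B_\rho$ is quadratic, of size about $(M-C)\rho^2-Mx_n^2+\delta|x_n|$, and the strict supersolution constraint $\mathcal M^+_{\lambda/n,\Lambda}(D^2\varphi_\eta)<0$ caps $M-C$ by a fixed multiple of $C$; near the equator of the sphere and on the flat rim this margin is $O(\rho^2)$ at best, while the expansion error is only $o(\rho^{1+\alpha})$, which still dwarfs $\rho^2$ as $\rho\to0$. So for fixed constants you cannot force the first contact off $\partial B_\rho$, no matter how small $\rho$ is or how refined the interior expansion.

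What actually closes the argument is not a better expansion but an $r$-dependent calibration of the barrier. Write its quadratic part as $\mu|x'|^2-Cx_n^2$ (your $M|x'|^2-C|x|^2$ is of this form with $\mu=M-C$) and let the coefficients blow up: $\mu\sim \tilde C\rho^{\alpha-1}$ and $C$ a fixed multiple of $\mu$ keep $\mathcal M^+(D^2\varphi)<0$, the margin on $\partial B_\rho$ becomes $\gtrsim\mu\rho^2=\tilde C\rho^{1+\alpha}$, exactly matching the $O(|x|^{1+\alpha})$ error from Proposition \ref{BPR}, and the kink $\delta|x_n|$ absorbs $-(C+\mu)x_n^2$ on the sphere once $\rho^\alpha\lesssim\delta$; this is precisely the device used in the paper's proof of \eqref{idapbq=0}, and with it your contradiction scheme (interior exclusion, contact on $\{x_n=0\}$, viscosity condition, $\delta\to0$) goes through with no refined asymptotics. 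The paper's own proof of this lemma is different again: it dominates $u-d^\pm x_n$ by the solution $w$ of an auxiliary Dirichlet problem in $B_r^\pm$ with data $C_1|x|^{1+\alpha}$ on the spherical part and $C_1r^{\alpha-1}|x'|^2$ on the flat part, then applies boundary $C^{1,\alpha}$ estimates to the rescaling of $w$ to get $w\le C_1r^{\alpha-1}|x'|^2+C_3r^\alpha x_n$, producing a function that touches $u$ from above at the origin with one-sided slopes $d^++C_3r^\alpha$ and $d^--C_3r^\alpha$; letting $r\to0$ gives $d^+-bd^-\ge0$ directly, with no sliding and no contradiction argument.
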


\begin{proof} From pointwise boundary regularity we know that
there exist  linear functions $L_u$ such that for all $r$ small $$|u - L_u| \leq C r^{1+\alpha}, \quad \text{in $\overline{B_r^+(0)}$.}$$Let us assume (by subtracting a linear function) that $L_u= d^+x_n.$

%We need to show that $$ad^+ - bd^- \geq 0.$$
Let $w$ solve $$\mathcal F^+(D^2 w)=0 \quad \text{in $B_r^+$}, \quad w=\phi_r \quad \text{on $\p B_r^+,$}$$ where
$$\phi_r:= \begin{cases}C_1|x|^{1+\alpha} & \text{if $x \in \p B_r^+ \cap \{x_n>0\}$}\\ C_2 |x'|^2 & \text{if $x \in B_r \cap \{x_n=0\}$}\end{cases}$$ with $C_1 =2 C$ and $C_2=C_1r^{\alpha-1}.$ Therefore, by the assumption that $u$ is twice pointwise differentiable at 0 we get (for $r$ small enough)
$$u - d^+x_n \leq \phi_r  \quad \text{on $\p B_r^+.$} $$
Then, by the comparison principle, \be\label{ubound} u - d^+ x_n \leq w  \quad \text{in $B_r^+.$} \ee Let $\tilde w$ be the rescaling:
$$\tilde w(x) = \frac{1}{r^{1+\alpha}} w(rx), \quad x \in B_1^+.$$ Then $\tilde w$ solves $$\mathcal G(D^2 \tilde w)=0 \quad \text{in $B_1^+$}, \quad \tilde w = C_1 |x'|^2 \quad \text{on $B_1 \cap \{x_n=0\}$}$$ where $\mathcal G(M) = r^{1-\alpha} \mathcal F^+(r^{\alpha-1} M)$ depend on $r$ but has the same ellipticity constants as $\mathcal F^+.$

By boundary $C^{1,\alpha}$ estimates we obtain that $$\|\tilde w\|_{1,\alpha} \leq C_3 \quad \text{in $\overline{B}^\pm_{1/2}$}$$ with $C_3$ universal.
Then, in particular
$$\tilde w \leq C_1|x'|^2+C_3x_n \quad \text{in $\overline{B}^+_{1/2}$}$$ and by rescaling we conclude that $$w \leq C_1 r^{\alpha-1}|x'|^2+ C_3 r^\alpha x_n, \quad \text{in $\overline{B}^+_{r/2}$}.$$
Thus, by \eqref{ubound}
$$u \leq C_1 r^{\alpha-1}|x'|^2+ C_3 r^\alpha x_n + d^+ x_n^+, \quad \text{in $\overline{B}^+_{r/2}$},$$ where we recall that $d^+= (u_n)^+(0).$
Arguing similarly in $B_r^-$ we also obtain that
$$u \leq C_1 r^{\alpha-1}|x'|^2+ C_3 r^\alpha x_n - d^- x_n^-, \quad \text{in $\overline{B}^-_{r/2}$}.$$ where $d^- = (u_n)^-(0).$
Thus,
$$\varphi= C_1r^{\alpha-1}|x'|^2+ C_3 r^\alpha x_n +  d^+x_n^+ - d^- x_n^-$$
touches $u$ by above at zero with $p=Mr^\alpha +d^+$ and $q= -Mr^\alpha+d^-$.
We conclude that
$$Mr^\alpha + bM r^\alpha + d^+ - bd^- \geq 0 $$ for all $r$ small, from which our desired claim follows.
\end{proof}

\section{Harnack inequality}\label{section4}
In this section we prove a Harnack-type inequality for ``flat" solutions to our free boundary problem \eqref{fb}.  Our strategy follows closely the arguments in \cite{DFS}. We especially point out the main technical differences in the proofs, which mostly consist of the choice of the barriers.

Throughout this section we consider a Lipschitz solution  $u$ to \eqref{fb}  with  $Lip(u) \leq L$.
As pointed out in the introduction, we distinguish two cases, the non-degenerate and the degenerate one.

\subsection{Non-degenerate case}In this case $u$ is trapped between two translations of a ``true" two-plane solution $U_\beta$ that is with $\beta \neq 0.$

\begin{thm}[Harnack inequality]\label{HI}There exists a universal constant $\bar
\ep$,  such that if $u$ is a solution of $(\ref{fb})$ that satisfies at some point $x_0 \in B_2$

\be\label{osc} U_\beta(x_n+ a_0) \leq u(x) \leq U_\beta(x_n+ b_0) \quad
\text{in $B_r(x_0) \subset B_2,$}\ee
with $$ \|f\|_{L^\infty} \leq \ep^2 \beta, \quad 0 < \beta \leq L, \quad$$ and
$$b_0 - a_0 \leq \ep r, $$ for some $\ep \leq \bar \ep,$ then
$$ U_{\beta}(x_n+ a_1) \leq u(x) \leq U_\beta(x_n+ b_1) \quad \text{in
$B_{r/20}(x_0)$},$$ with
$$a_0 \leq a_1 \leq b_1 \leq b_0, \quad
b_1 -  a_1\leq (1-c)\ep r, $$ and $0<c<1$ universal.
\end{thm}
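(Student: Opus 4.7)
The plan is to follow the dichotomy-plus-sliding-barrier scheme of \cite{D,DFS}. After rescaling (which preserves the class $\mathcal{E}(\lambda,\Lambda)$ and the scaled smallness of $f$), I may assume $x_0=0$ and $r=1$. It then suffices to establish the following dichotomy: either
\[
u(\tfrac{1}{5}e_n)\ \geq\ U_\beta\bigl(\tfrac{1}{5}+\tfrac{a_0+b_0}{2}\bigr) \qquad(\text{case A}),
\]
or the reverse inequality (case B). In case A I will push the lower trapping plane up to $U_\beta(x_n+a_0+c\eps)$ on $B_{1/20}$; case B is symmetric. Setting $w:=u-U_\beta(\cdot+a_0)$ in case A, the linearity of $U_\beta(\cdot+a_0)$ on its positive phase $\{x_n+a_0>0\}$ together with the uniform ellipticity of $\mathcal{F}$ places $w$ in the Pucci class $\mathcal{S}(\lambda/n,\Lambda)$ with source $f$. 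Since $w(\tfrac{1}{5}e_n)\geq c_1\eps$ and $\|f\|_\infty\leq\eps^2\beta\ll\eps$, the interior Krylov--Safonov Harnack inequality applied in $B_{1/5}(\tfrac{1}{5}e_n)$ yields $w\geq c_2\eps$ throughout $B_{1/10}(\tfrac{1}{5}e_n)$.

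The main step is to propagate this interior bound both to $B_{1/20}$ and down to $F(u)$, via a one-parameter family of strict comparison subsolutions $\{v_t\}_{t\in[0,c\eps]}$ obtained by perturbing $U_\beta(\cdot+a_0)$ in each phase separately. One sets
\[
v_t(x)=U_\beta\bigl(x_n+a_0+t\varphi^\pm(x)\bigr)
\]
on each side of its own free boundary, where the profiles $\varphi^\pm$ agree on $\{x_n=0\}$ (so $v_t$ has a single $C^2$ free boundary), vanish outside $B_{3/4}$, vanish on $B_{1/10}(\tfrac{1}{5}e_n)$, and satisfy $\varphi^\pm\geq 1$ on $B_{1/20}$. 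The $\varphi^\pm$ are assembled from radial profiles $\Gamma^\gamma(|x-y|)=|x-y|^{-\gamma}-C$ with $y\notin B_1$ and $\gamma=\gamma(n,\lambda,\Lambda)$ large, exactly as in the barrier built in the proof of Lemma \ref{deltand}; the same computation arranges $\mathcal{M}^-(\alpha D^2\varphi^+)>c_0$ on the positive side and $\mathcal{M}^-(-\beta D^2\varphi^-)>c_0$ on the negative side, while $\p_n\varphi^\pm>0$ on $\{x_n=0\}$ gives the strict free boundary inequality $(v_{t,\nu}^+)^2-(v_{t,\nu}^-)^2>1$. The hypothesis $\|f\|_\infty\leq\eps^2\beta$, together with $\alpha\geq 1$ and $\beta\leq L$, then absorbs the forcing and makes each $v_t$ a strict comparison subsolution in the sense of Definition \ref{defsub}.

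A sliding argument closes the proof. At $t=0$, $v_0\leq u$; a first-touch at some $\bar t\in(0,c\eps]$ is impossible, since outside $B_{3/4}$ we have $v_{\bar t}\equiv U_\beta(\cdot+a_0)\leq u$; on $B_{1/10}(\tfrac{1}{5}e_n)$ we have $\varphi^\pm=0$ and the Harnack bound gives $u\geq U_\beta(\cdot+a_0)+c_2\eps>v_{\bar t}$ once $c$ is small; in the interior of $\Omega^\pm(v_{\bar t})$ the strict $\mathcal{F}$-subsolution property rules out touching; and on $F(v_{\bar t})$ the strict free boundary inequality does. Hence $v_{c\eps}\leq u$ and so $u\geq U_\beta(x_n+a_0+c\eps)$ on $B_{1/20}$.

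The main obstacle I anticipate is the construction of $\varphi^\pm$. A naive single-function perturbation $v_t=U_\beta(x_n+a_0+t\varphi)$ fails: on the positive side $D^2 v_t=t\alpha D^2\varphi$, on the negative side $D^2 v_t=-t\beta D^2\varphi$, so the two interior strict-subsolution conditions would force $\mathcal{M}^-(D^2\varphi)>0$ and $\mathcal{M}^+(D^2\varphi)<0$ simultaneously --- impossible since $\mathcal{M}^-\leq\mathcal{M}^+$ for the nonlinear extremal operators. Resolving this requires two distinct profiles $\varphi^\pm$ matched along $\{x_n=0\}$, with careful bookkeeping of the resulting one-sided effects on $F(v_t)$ so that the strict free-boundary inequality survives the matching. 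The proportionality $\|f\|_\infty\leq\eps^2\beta$ (rather than a fixed small constant) is precisely what keeps the negative-phase interior estimate uniform in the possibly small parameter $\beta$.
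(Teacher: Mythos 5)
Your overall scheme (interior Harnack at $\tfrac15 e_n$ plus a sliding family of strict comparison subsolutions built from a radial barrier, after the dichotomy) is the right one and is essentially the paper's, but the step you single out as ``the main obstacle'' rests on a sign error, and your proposed fix is where the argument actually breaks. Since $U_\beta(t)=\alpha t^+-\beta t^-$, one has $U_\beta(t)=\beta t$ (not $-\beta t$) for $t\le 0$: the slope of $U_\beta$ is \emph{positive} in both phases. Hence for the single-profile perturbation $v_t(x)=U_\beta\bigl(x_n+a_0-\eps c_0\psi(x)+t\eps\bigr)$ the Hessian is $\alpha\,\eps c_0\,D^2w$ in the positive phase and $\beta\,\eps c_0\,D^2w$ in the negative phase (with $\psi=1-w$), i.e.\ a \emph{positive} multiple of $D^2w$ on both sides; the single condition $\mathcal M^-_{\lambda/n,\Lambda}(D^2w)\ge k>0$ then gives $\mathcal F(D^2v_t)\ge\beta\eps c_0 k>\eps^2\beta\ge\|f\|_\infty$ in both phases, and the free boundary condition follows from $(v_{t,\nu}^+)^2-(v_{t,\nu}^-)^2=(\alpha^2-\beta^2)\,|e_n-\eps c_0\nabla\psi|^2=1+\eps^2c_0^2|\nabla\psi|^2-2\eps c_0\psi_n>1$ since $\psi_n<0$ on $F(v_t)$. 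So the alleged incompatibility ($\mathcal M^->0$ and $\mathcal M^+<0$ simultaneously) never arises; this is exactly the construction in Lemma \ref{main} of the paper.

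Your replacement construction, by contrast, does not produce a valid comparison subsolution as written: you match $\varphi^+$ and $\varphi^-$ along $\{x_n=0\}$, but the interface separating the two phases of $v_t$ is $F(v_t)=\{x_n+a_0+t\varphi^\pm=0\}$, which is neither $\{x_n=0\}$ nor $t$-independent, so with two different profiles $v_t$ is in general discontinuous across its own free boundary, $F(v_t)$ need not be a single $C^2$ hypersurface, and the quantity $(v_{t,\nu}^+)^2-(v_{t,\nu}^-)^2$ you must control is evaluated along a surface where the two profiles were never matched; Definition \ref{defsub} is then not verified. In addition, two smaller points: the interior Harnack step must be run in a ball that stays inside the positive phase (the paper uses $B_{1/10}(\tfrac15 e_n)$ after normalizing $u\ge U_\beta$; your $B_{1/5}(\tfrac15 e_n)$ can meet $F(u)$ when $a_0<0$), and the cases $a_0<-1/5$ and $a_0>1/5$, where $B_{1/10}$ or $B_{1/5}$ lies entirely in one phase, should be dispatched separately by the standard Harnack inequality applied to $(u-\beta(x_n+a_0))/(\beta\eps)$ resp.\ $(u-\alpha(x_n+a_0))/(\alpha\eps)$ with the rescaled operators $\mathcal F_{\eps\beta},\mathcal F_{\eps\alpha}\in\mathcal E(\lambda,\Lambda)$, since there the sliding/free-boundary argument is neither needed nor directly applicable.
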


Let
$$\tilde u_\ep(x) = \begin{cases} \dfrac{u(x) -\alpha x_n }{\alpha\ep}  \quad \text{in $B_2^+(u) \cup F(u)$} \\ \ \\ \dfrac{u(x) -\beta x_n }{\beta\ep}  \quad \text{in $B_2^-(u)$.} \end{cases}$$

From a standard iterative argument (see \cite{DFS}), we obtain the following corollary.

\begin{cor} \label{corollary}Let $u$ be as in Theorem $\ref{HI}$  satisfying \eqref{osc} for $r=1$. Then  in $B_1(x_0)$ $\tilde
u_\ep$ has a H\"older modulus of continuity at $x_0$, outside
the ball of radius $\ep/\bar \ep,$ i.e. for all $x \in B_1(x_0)$ with $|x-x_0| \geq \ep/\bar\ep$
$$|\tilde u_\ep(x) - \tilde u_\ep (x_0)| \leq C |x-x_0|^\gamma.
$$
\end{cor}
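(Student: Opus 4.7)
The plan is to iterate Theorem \ref{HI} dyadically and then convert the resulting geometric decay of the oscillation of $u$ around the two-plane solution into a H\"older modulus of continuity for the rescaled function $\tilde u_\ep$. This is the standard Krylov--Safonov-style deduction; the only point that requires care is that the flatness parameter controlling the hypotheses of Theorem \ref{HI} grows at each iteration, so one can only iterate a finite number of times, which is exactly what forces the exclusion of the small ball of radius $\ep/\bar\ep$.

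First I would set $r_k = 20^{-k}$ and, starting from the hypothesis $U_\beta(x_n+a_0) \le u \le U_\beta(x_n+b_0)$ in $B_1(x_0)$ with $b_0-a_0 \le \ep$, construct inductively two monotone sequences $a_0 \le a_1 \le \cdots \le b_1 \le b_0$ with
\[
U_\beta(x_n + a_k) \le u(x) \le U_\beta(x_n + b_k) \quad \text{in } B_{r_k}(x_0), \qquad b_k - a_k \le (1-c)^k \ep,
\]
by applying Theorem \ref{HI} at each step. To apply Theorem \ref{HI} at step $k$ I would set $\tilde\ep_k := (b_k-a_k)/r_k \le (20(1-c))^k \ep$ and verify the two assumptions there: first, $b_k - a_k \le \tilde\ep_k r_k$ by construction and $\tilde\ep_k \le \bar\ep$ provided $k$ does not exceed the threshold $k_0$ determined by $(20(1-c))^{k_0}\ep \sim \bar\ep$; second, $\|f\|_{L^\infty} \le \ep^2\beta \le \tilde\ep_k^2\beta$, which holds because $\tilde\ep_k \ge \ep$.

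The key observation is that $r_{k_0} \simeq \ep/\bar\ep$, so the iteration runs precisely down to the scale below which the Harnack hypothesis can fail. For any $x \in B_1(x_0)$ with $d := |x-x_0| \ge \ep/\bar\ep$, I would choose the unique $k \le k_0$ with $r_{k+1} < d \le r_k$. Inside $B_{r_k}(x_0)$ the inclusion $U_\beta(x_n+a_k) \le u \le U_\beta(x_n+b_k)$ translates, by the very definition of $\tilde u_\ep$ on each phase, into the oscillation bound
\[
\operatorname{osc}_{B_{r_k}(x_0)} \tilde u_\ep \le \frac{b_k-a_k}{\ep} \le (1-c)^k.
\]
Setting $\gamma := |\log(1-c)|/\log 20$ gives $(1-c)^k = r_k^{\gamma} \le (20 d)^\gamma$, which is the claimed H\"older estimate with a universal constant $C = 20^\gamma$.

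The only genuinely delicate point is checking that $\tilde u_\ep$ is indeed defined consistently across the free boundary inside each ball $B_{r_k}(x_0)$, i.e.\ that the positive and negative phase definitions glue to produce a single function whose oscillation is controlled by $(b_k - a_k)/\ep$. This follows from a short case analysis using the explicit form of $U_\beta$: on $\{x_n + a_k > 0\}$ the upper and lower bounds on $u$ give $\alpha a_k \le u - \alpha x_n \le \alpha b_k$, on $\{x_n + b_k < 0\}$ one gets the analogous inequalities for the negative phase, and in the transition strip $\{-b_k \le x_n \le -a_k\}$ (of thickness $b_k-a_k$) both expressions for $\tilde u_\ep$ lie within $[a_k/\ep, b_k/\ep]$. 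Once this bookkeeping is done, the H\"older estimate follows immediately from the dyadic decay above.
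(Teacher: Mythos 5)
Your argument is correct and is essentially the standard dyadic iteration of Theorem \ref{HI} that the paper invokes by citing \cite{DFS}: repeated application of the Harnack inequality around $x_0$ with flatness parameter $(20(1-c))^k\ep$ until it reaches the threshold $\bar\ep$ (which is exactly why the ball of radius $\ep/\bar\ep$ is excluded, since $20^{-k}\geq \ep/\bar\ep$ implies $(20(1-c))^k\ep\leq\bar\ep$), followed by the conversion of the trapping $U_\beta(x_n+a_k)\leq u\leq U_\beta(x_n+b_k)$ into the bound $a_k/\ep\leq\tilde u_\ep\leq b_k/\ep$ on $B_{20^{-k}}(x_0)$. One cosmetic adjustment: run the induction with the explicit parameter $\ep_k=(20(1-c))^k\ep$ rather than the exact ratio $(b_k-a_k)/20^{-k}$, since only the former is guaranteed to satisfy $\ep_k\geq\ep$, which is what you need to verify $\|f\|_{L^\infty}\leq\ep_k^2\beta$.
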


The main tool in the proof of the Harnack inequality is the following lemma.

\begin{lem}\label{main}There exists
a universal constant $\bar \ep>0$ such that if $u$ satisfies \be\label{control}  u(x) \geq U_\beta(x), \quad \text{in $B_1$}\ee with \be\label{nond}\|f\|_{L^\infty(B_1)} \leq \ep^2 \beta, \quad 0 < \beta \leq L, \ee  and at $\bar x=\dfrac{1}{5}e_n$ \be\label{u-p>ep2}
u(\bar x) \geq U_\beta(\bar x_n + \ep), \ee then \be u(x) \geq
U_\beta(x_n+c\eps) \quad \text{in $\overline{B}_{1/2},$}\ee for some
$0<c<1$ universal.  Analogously, if $$u(x) \leq U_\beta(x) \quad \text{in $B_1$}$$ and $$ u(\bar x) \leq U_\beta(\bar x_n - \eps)$$ then $$ u(x) \leq U_\beta(x_n - c \ep) \quad
\text{in $\overline{B}_{1/2}.$}$$
\end{lem}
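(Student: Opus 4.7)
The plan is to prove the first assertion; the second is symmetric, with a strict supersolution barrier. The strategy, as in DFS, is to construct a strict comparison subsolution $\varphi$ in an annulus around $\bar x=\tfrac{1}{5}e_n$ and invoke the comparison principle for viscosity solutions of \eqref{fb}; the only novelty relative to the linear case of DFS is the use of Pucci extremal bounds to handle the fully nonlinear operator $\mathcal{F}$.

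First, I would promote the single-point estimate \eqref{u-p>ep2} to an open one. Since $u\ge U_\beta$ and $\bar x_n=\tfrac{1}{5}$, the ball $B_{1/10}(\bar x)$ lies in $B_1^+(u)$, so $w:=u-\alpha x_n$ is nonnegative there, solves $\mathcal{F}(D^2 w)=f$, and $w(\bar x)\ge\alpha\eps$ by \eqref{u-p>ep2}. Because $\|f\|_\infty\le\eps^2\beta\le\eps^2\alpha$, Harnack for $\mathcal{F}$ yields $w\ge c_0\alpha\eps$ on $B_{1/20}(\bar x)$ for a universal $c_0$.

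Next, I would take as barrier $\varphi(x):=U_\beta\bigl(x_n+c\eps\phi(x)\bigr)$ on the annulus $A:=\overline{B_{3/4}(\bar x)}\setminus B_{1/20}(\bar x)$, where $c$ is a small universal constant and $\phi(x):=|x-\bar x|^{-\gamma}-(3/4)^{-\gamma}$ with $\gamma>\Lambda n(n-1)/\lambda-1$. The standard radial Hessian computation gives $\mathcal{M}^-_{\lambda/n,\Lambda}(D^2\phi)\ge c_\gamma>0$ on $A$, and $\phi_n(x)=-\gamma|x-\bar x|^{-\gamma-2}(x_n-\tfrac{1}{5})>0$ whenever $x_n<\tfrac{1}{5}$. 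Three things must be checked: (a) \emph{boundary comparison}: $\varphi\le u$ on $\partial B_{3/4}(\bar x)$ (where $\phi=0$) directly from $u\ge U_\beta$, and on $\partial B_{1/20}(\bar x)$ (where $\phi$ equals a positive universal constant $c_1$) from the Harnack step, after choosing $c\le c_0/c_1$; (b) \emph{interior strict subsolution}: in each open phase, $\mathcal{F}(D^2\varphi)\ge c\eps\,(\alpha$ or $\beta)\,\mathcal{M}^-(D^2\phi)\ge c_\gamma c\eps\,(\alpha$ or $\beta)$, which beats $\|f\|_\infty\le\eps^2\beta\le\eps^2\alpha$ once $\eps\le\bar\eps$ for a universal $\bar\eps$; (c) \emph{strict free boundary condition}: on $F(\varphi)=\{x_n+c\eps\phi=0\}$ one has $x_n=-c\eps\phi<0<\tfrac{1}{5}$, hence $\phi_n>0$, and since $\alpha^2-\beta^2=1$, $(\varphi_\nu^+)^2-(\varphi_\nu^-)^2=|e_n+c\eps\nabla\phi|^2=1+2c\eps\phi_n+O(\eps^2)>1$ for $\eps$ small.

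The comparison principle for \eqref{fb}---slide $\varphi+t$ from $-\infty$ until first contact at some $t^*\le 0$; an interior contact contradicts uniform ellipticity against the strict interior inequality, while a free-boundary contact contradicts Definition \ref{defnhsol}(ii) via the strict free-boundary inequality---then yields $\varphi\le u$ throughout $A$. Since $|x-\bar x|\le\tfrac{7}{10}$ on $B_{1/2}$, the function $\phi$ is bounded below there by a universal $c_2:=(7/10)^{-\gamma}-(3/4)^{-\gamma}>0$, so by monotonicity of $U_\beta$, $u\ge U_\beta(x_n+cc_2\eps)$ on $B_{1/2}\cap A$; on $B_{1/2}\cap B_{1/20}(\bar x)$ (which sits in the positive phase) the Harnack step gives $u\ge U_\beta(x_n+c_0\eps)$ directly. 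Combining the two bounds closes the argument with constant $\min(cc_2,c_0)$. The main technical delicacy is the strict free-boundary inequality for $\varphi$: it depends crucially on placing $\bar x$ at $\tfrac{1}{5}e_n$, which forces $x_n<\tfrac{1}{5}$ on $F(\varphi)$ and hence $\phi_n>0$ there; likewise, the scaling $\|f\|_\infty\le\eps^2\beta$ is exactly what is needed to absorb $f$ into the interior subsolution inequality on the \emph{negative} side, where $D^2\varphi$ picks up a factor of $\beta$ rather than $\alpha$ and the non-degeneracy assumption $\beta>0$ is therefore essential.
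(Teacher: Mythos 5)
Your barrier is, up to reparametrization, exactly the one in the paper: $U_\beta(x_n+c\eps\phi)$ with $\phi$ the radial profile is the paper's family $v_t(x)=U_\beta(x_n-\eps c_0\psi(x)+t\eps)$, $\psi=1-w$, and your Harnack step, interior Pucci computation, and free-boundary inequality check all match the paper's proof. The gap is in how you invoke comparison. You propose to slide \emph{vertically}, i.e.\ to consider $\varphi+t$ and increase $t$ from $-\infty$, claiming that a first contact at $t^*<0$ on $F(u)$ contradicts Definition \ref{defnhsol}(ii). But for $t^*\neq 0$ the translate $\varphi+t^*$ is \emph{not} a strict comparison subsolution in the sense of Definition \ref{defsub}: its zero level set $\{\varphi=-t^*\}$ lies inside the open positive phase of $\varphi$, where $\varphi$ is smooth, so across $F(\varphi+t^*)$ the one-sided slopes coincide and $((\varphi+t^*)^+_\nu)^2-((\varphi+t^*)^-_\nu)^2=0<1$. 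Hence the definition of viscosity solution gives no contradiction at a contact point $x_0\in F(u)$ with $\varphi(x_0)=-t^*>0$, and such contact is genuinely possible: the region $\{x_n\le 0<x_n+c\eps\phi\}$ lies in $\{x_n\le 0\}$, where the hypothesis $u\ge U_\beta$ places no restriction on $F(u)$; indeed a smooth function with equal slopes $\approx\alpha$ on both sides can perfectly well touch a two-plane solution $U_\beta$ from below at a free boundary point. So your sliding can stall at some $t^*<0$ and never delivers $\varphi\le u$ in $A$.

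The fix is the one the paper uses: slide \emph{inside the argument of} $U_\beta$, i.e.\ work with the family $v_t(x)=U_\beta\bigl(x_n-\eps c_0\psi(x)+t\eps\bigr)$ (equivalently $U_\beta(x_n+c\eps\phi(x)-s)$ with $s$ decreasing), defined on all of $\overline B_{3/4}(\bar x)$ by extending the profile as a constant on the inner ball. Every member of this family is a genuine two-phase function with slopes $\alpha$ and $\beta$ at its own zero set, so in the annulus it is a strict comparison subsolution (your computations (b) and (c) apply verbatim, noting $\beta>0$ guarantees every zero of $v_t$ lies on $F(v_t)$); at the first contact, touching on $F(v_t)$ is excluded by Definition \ref{defnhsol}(ii), interior touching by the strict inequality $\mathcal F(D^2v_t)>\|f\|_\infty$, and touching on $\partial B_{3/4}(\bar x)$ by $v_t<U_\beta\le u$ there. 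The contact is therefore forced onto $\overline B_{1/20}(\bar x)$, where it contradicts the Harnack bound $u-\alpha x_n\ge c_0\alpha\eps$, and one concludes $\bar t\ge c_0$, which yields the stated improvement on $\overline B_{1/2}$ exactly as in your final assembly.
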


\begin{proof} We prove the first statement. For notational simplicity we drop the sub-index $\beta$ from $U_\beta$ and the dependence of the Pucci operators from $\lambda/n, \Lambda.$

Let $$w=c(|x-\bar x|^{-\gamma} - (3/4)^{-\gamma})$$ be defined in the closure of the annulus
$$A:=  B_{3/4}(\bar x) \setminus \overline{B}_{1/20}(\bar x)$$ where $\gamma>0$ will be fixed later on.
The constant $c$ is such that
$w$ satisfies the
boundary conditions
  $$\begin{cases}
    w =0 & \text{on $\p B_{3/4}(\bar x)$}, \\
    w=1 & \text{on $\p B_{1/20}(\bar x)$}.
  \end{cases} $$ Extend $w$ to be equal
to 1 on $B_{1/20}(\bar x).$
 We claim that, $$\mathcal F (D^2w) \geq  k(n,\lambda,\Lambda)>0, \quad \text{in $A$}.
 $$

Indeed,
since $w(x)=w(|x-\bar x|)$ is a radial function ($r=|x-\bar x|$), we find that in the appropriate system of coordinates,
$$D^2 w = c \gamma r^{-\gamma-2} diag\{(\gamma+1), -1,\ldots, -1\}.$$
Then, in $A$
 \begin{align*}
\mathcal M^-(D^2 w)
 = c \gamma|x-\bar x|^{-\gamma-2}(\frac{\lambda}{n}(\gamma+1) -\Lambda(n-1)) \geq k(n,\lambda,\Lambda)>0, \end{align*}
 as long as $\gamma > \max\{0, \frac{n(n-1)\Lambda}{\lambda} -1\}.$

Thus
 $$\mathcal{F}(D^2 w) \geq \mathcal{M}^-(D^2w) \geq k(n,\lambda,\Lambda) > 0, \quad 0 \leq w \leq 1  \quad \text{in $A$.}$$

 Having provided the appropriate barrier, the proof now proceeds as in Lemma 4.3 in \cite{DFS}. For the reader's convenience we provide the details.

Notice that since $x_n >0$ in $B_{1/10}(\bar x)$  and $u \geq U$ in $B_1$ we get
$$ B_{1/10}(\bar x) \subset B_1^+(u). $$

Thus $u-U \geq
0$ and solves $\mathcal{F}(D^2(u-U))=\mathcal{F}(D^2u)=f$  in $B_{1/10}(\bar x)$. We can apply Harnack inequality
to obtain
\be\label{HInew} u(x)
- U(x) \geq c(u(\bar x)- U(\bar x)) - C \|f\|_{L^\infty} \quad \text{in $\overline B_{1/20}(\bar x)$}. \ee
From the assumptions \eqref{nond} and  \eqref{u-p>ep2}  we conclude that (for $\ep$ small enough)
\be\label{u-p>cep} u
- U \geq \alpha c\ep - C \ep^2\beta \geq \alpha c_0\ep \quad \text{in $\overline B_{1/20}(\bar x)$}. \ee
Now set $\psi =1-w$ and

\be\label{v} v(x)= U(x_n - \ep c_0 \psi(x)), \quad x \in \overline B_{3/4}(\bar x),\ee and for $t
\geq 0,$
$$v_t(x)= U(x_n - \ep c_0 \psi(x)+t\ep), \quad x \in \overline B_{3/4}(\bar x).
$$

Then,

$$\label{v<u} v_0(x)=U(x_n - \ep c_0 \psi(x)) \leq
U(x) \leq u(x) \quad x \in \overline B_{3/4}(\bar x).$$

Let $\bar t$ be the largest $t \geq 0$ such that
$$v_{t}(x) \leq u(x) \quad \text{in $\overline B_{3/4}(\bar x)$}.$$
% Thus at some
%$\overline B_{3/4}(\bar x)$ we have $$v_{\bar t}(\tilde x) =
%u(\tilde x).$$

We want to show that $\bar t \geq c_0.$ Then we get the desired statement. Indeed,
$$u(x) \geq v_{\bar t}(x) = U(x_n - \ep c_0 \psi + \bar t \ep) \geq U(x_n + c\ep) \quad \text{in $B_{1/2}  \subset \subset B_{3/4}(\bar x)$}$$ with $c$ universal. In the last inequality we used that $\|\psi\|_{L^\infty(B_{1/2})} <1.$

Suppose $\bar t < c_0$. Then at some
$\tilde x \in \overline B_{3/4}(\bar x)$ we have $$v_{\bar t}(\tilde x) =
u(\tilde x).$$ We show that such touching point can only occur on $\overline B_{1/20}(\bar x).$
Indeed, since $w\equiv 0$ on $\p B_{3/4}(\bar x)$ from the definition of $v_t$ we get that for $\bar t < c_0$

$$v_{\bar t}(x) =U(x_n -\ep c_0\psi(x) + \bar t \ep) < U(x) \leq u(x)\quad \textrm{on  $\p B_{3/4}(\bar x)$}.$$

We now show that $\tilde x$ cannot belong to the annulus $A$.
Indeed, in $A^+(v_{\bar t})$
\begin{equation*}\label{laplacev}
%\begin{split}
\mathcal{F}(D^2v_{\bar t}(x)) =\mathcal{F}(\alpha \eps c_0 D^2 w) \geq \mathcal M^-(\alpha \eps c_0 D^2 w) \geq \alpha \eps c_0 k \geq \beta \eps^2 \geq \|f\|_\infty
%-\mathcal{F}(0)\\
%&=-\ep c_0\sum_{i,j=1}^n\int_{0}^1\frac{\partial \mathcal{F} (tD^2v_{\bar t}(x))}{\partial m_{ij}}dt\frac{\partial^2\psi(x)}{\partial x_i\partial x_j}=\ep c_0\sum_{i,j=1}^n\int_{0}^1\frac{\partial F (tD^2v_{\bar t}(x))}{\partial m_{ij}}dt\frac{\partial^2w(x)}{\partial x_i\partial x_j}\\
%&\geq \ep c_0\mathcal{M}^-_{\lambda,\Lambda}(D^2 w))\geq \beta \ep c_0 k(n,\lambda,\Lambda) > \ep^2 \beta \geq \|f\|_{\infty},
%\end{split}
\end{equation*} for $\ep$ small enough. An analogous computation holds in $A^-(v_{\bar t}).$

Finally,
$$(v_{\bar t}^+)_\nu^2 - (v_{\bar t}^-)_\nu^2 = 1 + \ep^2 c_0^2 |\nabla \psi|^2  - 2\ep c_0 \psi_n \quad \text{on $F(v_{\bar t}) \cap A$}.$$
Thus,
$$(v_{\bar t}^+)_\nu^2 - (v_{\bar t}^-)_\nu^2 > 1  \quad \text{on $F(v_{\bar t}) \cap A$}$$
as long as
$$\psi_n <0  \quad \text{on $F(v_{\bar t}) \cap A$}.$$
This can be easily verified from the formula for $\psi$ (for $\ep$ small enough.)

 Thus,
$v_{\bar t}$ is a strict subsolution to $\eqref{fb}$ in $A$ which lies below $u$, hence by the definition of viscosity solutions
$\tilde x$ cannot belong to $A.$

Therefore, $\tilde x \in \overline B_{1/20}(\bar x)$ and
$$u(\tilde x)=v_{\bar t}(\tilde x)  = U(\tilde x_n + \bar t \ep) \leq U(\tilde x) + \alpha \bar t \eps < U(\tilde x) + \alpha c_0 \ep$$ contradicting \eqref{u-p>cep}.
\end{proof}

We can now prove our Theorem \ref{HI}.

\vspace{3mm}

\textit{Proof of Theorem $\ref{HI}$.} Assume without loss of generality that $x_0=0, r=1.$ We distinguish three cases.

\vspace{1mm}

{\it Case 1.} $a_0 < - 1/5.$ In this case it follows from \ref{osc} that $B_{1/10} \subset \{u<0\}$ and
$$0 \leq v(x):= \frac{u(x) - \beta(x_n+a_0)}{\beta \eps} \leq 1.$$

We recall that the operator $\mathcal{F}_{\epsilon\beta}(M)=\frac{1}{\epsilon\beta}\mathcal{F}(\epsilon\beta M) \in \mathcal{E}(\lambda,\Lambda),$
and
$$
\mathcal{F}_{\epsilon\beta}(D^2 v)=\frac{f}{\beta\epsilon}.
$$
Moreover
 $$|\mathcal{F}_{\eps \beta}(D^2 v)| \leq \eps \quad \text{in $B_{1/10}$}$$ since $\|f\|<\beta\epsilon^2.$ The desired claim follows from standard Harnack inequality applied to the function $v$.

\vspace{1mm}

{\it Case 2.} $a_0 > 1/5.$ In this case it follows from \eqref{osc} that $B_{1/5} \subset \{u>0\}$ and
$$0 \leq v(x):= \frac{u(x) - \alpha(x_n+a_0)}{\alpha \eps} \leq 1.$$

As before,$$
\mathcal{F}_{\epsilon\alpha}(D^2 v)=\frac{1}{\epsilon\alpha}\mathcal{F}(\epsilon\alpha D^2v)= \frac{f}{\alpha\epsilon},
$$
hence
$$|\mathcal{F}_{\eps \alpha}(D^2v)| \leq \eps \quad \text{in $B_{1/5}$}$$ since $\|f\|\leq \beta \epsilon^2 \leq \alpha \eps^2.$ Again, the desired claim follows from standard Harnack inequality for $v$.

\vspace{1mm}

{\it Case 3.} $|a_0| \leq 1/5.$ In this case we argue exactly as in the Laplacian case (see Theorem 4.1 in \cite{DFS}) using the key Lemma \ref{main}.

%Assumption \eqref{osc} gives that
%$$U_\beta(x_n + a_0) \leq u(x) \leq U_\beta(x_n+a_0+\eps) \quad \text{in $B_1$.}$$ Assume that  (the other case is treated similarly)
%\be\label{top} u(\bar x) \geq U_{\beta}(\bar x_n + a_0 + \frac{\eps}{2}), \quad \bar x=\frac{1}{5}e_n.\ee

%Call $$v(x) := u(x-a_0 e_n), \quad x \in B_{4/5}.$$ Then the inequality above reads
%$$U_\beta(x_n) \leq v(x) \leq U_\beta(x_n+\eps) \quad \text{in $B_{4/5}$.}$$From \eqref{top} $$v(\bar x) \geq U_\beta(\bar x_n + \frac{\eps}{2}).$$ Then, by Lemma \ref{main}, $$v(x) \geq U_\beta(x_n + c\eps), \quad \text{in $B_{2/5}$}$$ which gives the desired improvement
%$$u(x) \geq U_{\beta}(x+a_0+ c\eps) \quad \text{in $B_{3/5}.$}$$
\qed

\subsection{Degenerate case}
In this case, the negative part of $u$ is negligible and the positive part is close to a one-plane solution (i.e. $\beta=0$).

\begin{thm}[Harnack inequality]\label{HIdg}There exists a universal constant $\bar
\ep$,  such that if $u$ satisfies at some point $x_0 \in B_2$

\be\label{oscdeg} U_0(x_n+ a_0) \leq u^+(x) \leq U_0(x_n+ b_0) \quad
\text{in $B_r(x_0) \subset B_2,$}\ee
with  $$\|u^-\|_{L^\infty} \leq \ep, \quad \|f\|_{L^\infty} \leq \ep^3$$ and
$$b_0 - a_0 \leq \ep r, $$ for some $\ep \leq \bar \ep,$ then
$$ U_{0}(x_n+ a_1) \leq u^+(x) \leq U_0(x_n+ b_1) \quad \text{in
$B_{r/20}(x_0)$},$$ with
$$a_0 \leq a_1 \leq b_1 \leq b_0, \quad
b_1 -  a_1\leq (1-c)\ep r, $$ and $0<c<1$ universal.
\end{thm}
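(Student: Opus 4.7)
The plan is to follow the structure of the proof of Theorem \ref{HI} after normalizing $x_0=0$, $r=1$, and to distinguish three cases based on the location of $a_0$. When $a_0 < -\tfrac{1}{5}$, we have $b_0 \le a_0 + \eps < -\tfrac{1}{10}$, so the upper bound in \eqref{oscdeg} forces $u^+ \equiv 0$ on $B_{1/20}$; choosing $a_1 = b_1 = a_0$ then gives zero oscillation. When $a_0 > \tfrac{1}{5}$, the lower bound yields $u(x) \ge x_n + a_0 > 0$ throughout $B_{1/5}$, so $u$ satisfies $\mathcal F(D^2 u) = f$ there and the rescaling $v := (u - (x_n + a_0))/\eps \in [0,1]$ solves $\mathcal F_\eps(D^2 v) = f/\eps$ with $|f/\eps| \le \eps^2$; the classical Harnack inequality for fully nonlinear equations closes this case. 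The essential case is $|a_0| \le \tfrac{1}{5}$, to be handled by an analog of Lemma \ref{main}.

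For this analog I plan to prove: if $u^+ \ge U_0(x_n)$ in $B_1$, $\|u^-\|_\infty \le \eps$, $\|f\|_\infty \le \eps^3$, and $u(\bar x) \ge \bar x_n + \eps$ at $\bar x = e_n/5$, then $u^+ \ge U_0(x_n + c\eps)$ on $B_{1/2}$, with a symmetric statement from above. Since $u(\bar x) > \bar x_n > 0$ and $u$ is continuous, $B_{1/20}(\bar x) \subset \Omega^+(u)$, and applying the standard Harnack to $u - x_n$ yields $u - x_n \ge c\eps$ on a slightly smaller ball. I then reuse the radial barrier $w(x) := c(|x-\bar x|^{-\gamma} - (3/4)^{-\gamma})$ on the annulus $A := B_{3/4}(\bar x) \setminus \overline B_{1/20}(\bar x)$, with $\gamma$ large enough that $\mathcal F(D^2 w) \ge k(n,\lambda,\Lambda) > 0$, set $\psi := 1 - w$, and form the sliding family
\[
v_t(x) := U_0\bigl(x_n - \eps c_0\,\psi(x) + t\eps\bigr) = \bigl(x_n - \eps c_0\,\psi(x) + t\eps\bigr)^+.
\]
Sliding $t$ upward from $0$ to the first contact $\bar t$ of $v_t$ with $u^+$, I will rule out contact (i) on $\partial B_{3/4}(\bar x)$ (where $w \equiv 0$ yields strict inequality for $\bar t < c_0$), (ii) in $A^+(v_{\bar t})$ (where $\mathcal F(D^2 v_{\bar t}) \ge \eps c_0 k > \eps^3 \ge \|f\|_\infty$ precludes contact with the viscosity equation for $u$), and (iii) on $F(v_{\bar t}) \cap A$ (where $\psi_n < 0$ near $\{x_n = 0\}$ makes the strict free-boundary condition $(v_{\bar t}^+)_\nu^2 - (v_{\bar t}^-)_\nu^2 > 1$ hold). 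Contact is thereby forced into $\overline B_{1/20}(\bar x)$, contradicting the Harnack improvement if $\bar t < c_0$.

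The hard part is the degeneracy itself: the barrier $v_t$ has identically zero negative phase while $u$ may carry a nontrivial negative part bounded only by $\eps$, so the two-phase comparison $v_t \le u$ cannot be maintained on $\{u < 0\}$ and the sliding must be performed against $u^+$ rather than $u$. The correspondingly subtle step is the exclusion of contact at points on $F(u) \cap F(v_{\bar t})$: there one needs the strict barrier slope $(v_{\bar t}^+)_\nu \ge 1 + c\eps$ to dominate the free-boundary slope $u_\nu^+ = \sqrt{1+(u_\nu^-)^2}$ of $u$, which forces the auxiliary bound $u_\nu^- = O(\eps)$ on $F(u)$. This bound should follow from the smallness of $u^-$ (which satisfies a uniformly elliptic equation with $\|u^-\|_\infty \le \eps$ and vanishes on $F(u)$) via a Hopf-type barrier, using that in the regime $|a_0| \le 1/5$ the negative phase $\Omega^-(u)$ contains a uniform tangent ball at every point of $F(u)$. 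Once the analog of Lemma \ref{main} is established, the iterative corollary analogous to Corollary \ref{corollary} and the statement of Theorem \ref{HIdg} itself follow verbatim from the arguments in the non-degenerate case.
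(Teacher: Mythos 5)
Your reduction to three cases and the treatment of Cases 1 and 2 are fine and match the paper's scheme, but the key lemma in Case 3 has a genuine gap. You slide the one-phase family $v_t=(x_n-\eps c_0\psi+t\eps)^+$ against $u^+$ only. At the resulting contact point $\tilde x\in F(u)\cap F(v_{\bar t})$ you cannot invoke the viscosity free boundary condition: Definition \ref{defnhsol} requires a strict \emph{two-phase} comparison subsolution touching the full function $u$ by below, whereas $v_{\bar t}\le u^+$ does not give $v_{\bar t}\le u$ (indeed $v_{\bar t}\ge 0>u$ on $\{u<0\}$), so no admissible touching occurs. Your proposed patch, a pointwise bound $u_\nu^-=O(\eps)$ on $F(u)$ obtained from a Hopf-type barrier, is not available: it presupposes that $u$ has classical normal derivatives on $F(u)$ and that $\Omega^-(u)$ has a uniform interior tangent ball at every free boundary point, neither of which holds for a viscosity solution whose free boundary is so far only known to lie in a thin slab; and the identity $u_\nu^+=\sqrt{1+(u_\nu^-)^2}$ cannot be used pointwise in this setting.

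The paper's Lemma \ref{main2} resolves exactly this difficulty by making the sliding family genuinely two-phase,
\begin{equation*}
v_t(x)=(x_n-\eps c_0\psi+t\eps)^+-C_1\eps^2\,(x_n-\eps c_0\psi+t\eps)^-,
\end{equation*}
so that $v_t$ lies below the full $u$ and is a strict comparison subsolution in the sense of Definition \ref{defsub}. The ordering in $\{u<0\}$ is guaranteed by the refined barrier estimate \eqref{negu}, $u^-\le C\eps^2 x_n^-$, proved by comparing $u^-$ (which satisfies $\mathcal M^+(D^2u^-)\ge -\eps^4$ in $\{u<0\}\subset\{x_n\le 0\}$) with an explicit $\mathcal M^+$-barrier having $C^{2,\alpha}$ estimates up to $\{x_n=0\}$; this is where the hypotheses $\|u^-\|_\infty\le\eps^2$, $\|f\|_\infty\le\eps^4$ of the lemma (stronger than the $\eps$, $\eps^3$ you carried over from the theorem) are used, since they make the barrier's negative slope $O(\eps^2)$. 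The free boundary inequality for $v_{\bar t}$ then reads $(v_{\bar t}^+)_\nu^2-(v_{\bar t}^-)_\nu^2=(1-C_1^2\eps^4)(1+\eps^2c_0^2|\nabla\psi|^2-2\eps c_0\psi_n)>1$, the $O(\eps)$ gain from $-\psi_n>c$ absorbing the $O(\eps^4)$ loss, and the contact point is excluded from the annulus directly by the definition of viscosity solution, with no pointwise information about $u$ on $F(u)$ ever needed. Without this modification of the barrier (or an equivalent device) your Case 3 argument does not close.
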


From the theorem above we conclude the following.

\begin{cor} \label{corollary4}Let $u$ be as in Theorem $\ref{HIdg}$  satisfying \eqref{osc} for $r=1$. Then  in $B_1(x_0)$ $$\tilde
u_\ep:= \frac{u^+(x) - x_n}{\ep}$$ has a H\"older modulus of continuity at $x_0$, outside
the ball of radius $\ep/\bar \ep,$ i.e for all $x \in B_1(x_0)$, with $|x-x_0| \geq \ep/\bar\ep$
$$|\tilde u_\ep(x) - \tilde u_\ep (x_0)| \leq C |x-x_0|^\gamma.
$$
\end{cor}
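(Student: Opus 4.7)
The plan is to iterate Theorem \ref{HIdg} on the dyadic scales $r_k = 20^{-k}$, following the standard oscillation-decay argument that yields Corollary \ref{corollary} above (and Corollary 4.2 of \cite{DFS}) in the non-degenerate case.

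After scaling so that $r=1$, I would construct by induction on $k$ two monotone sequences $a_0 \leq a_1 \leq \cdots \leq b_1 \leq b_0$ with
\[
U_0(x_n + a_k) \leq u^+(x) \leq U_0(x_n + b_k) \quad \text{on } B_{r_k}(x_0), \qquad b_k - a_k \leq (1-c)^k \eps.
\]
At the $k$-th step I would apply Theorem \ref{HIdg} on $B_{r_k}(x_0)$ with effective flatness parameter
\[
\eps_k := \max\bigl(\eps,\,(b_k - a_k)/r_k\bigr) \leq \max\bigl(1,\,(20(1-c))^k\bigr)\,\eps.
\]
The remaining hypotheses $\|u^-\|_\infty \leq \eps_k$ and $\|f\|_\infty \leq \eps_k^3$ come for free since $\eps \leq \eps_k$; the only genuine constraint is $\eps_k \leq \bar\eps$, which selects the largest admissible index $k^*$. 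A direct computation shows that in the nontrivial case $20(1-c) > 1$ one has $r_{k^*+1} < \eps/\bar\eps$ (in the opposite case $\eps_k$ stays equal to $\eps$ and the iteration never terminates, yielding a stronger pointwise estimate).

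Given $x \in B_1(x_0)$ with $|x-x_0| \geq \eps/\bar\eps$, pick $k \leq k^*$ so that $r_{k+1} < |x-x_0| \leq r_k$. Since $\tilde u_\eps = (u^+ - x_n)/\eps$ oscillates on $B_{r_k}(x_0)$ by at most $(b_k - a_k)/\eps \leq (1-c)^k$, and $r_k \leq 20|x-x_0|$,
\[
|\tilde u_\eps(x) - \tilde u_\eps(x_0)| \leq (1-c)^k \leq C\,|x-x_0|^{\gamma}, \qquad \gamma := \frac{\log\bigl(1/(1-c)\bigr)}{\log 20} > 0.
\]
The main conceptual point — and what distinguishes the degenerate from the non-degenerate setting — is that the smallness bounds $\|u^-\|_\infty \leq \eps$ and $\|f\|_\infty \leq \eps^3$ do not improve as one passes to smaller scales (in the non-degenerate case the corresponding bound $\|f\|_\infty \leq \eps^2\beta$ scales favorably together with the flatness). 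This rigidity is precisely what forces the H\"older estimate to be valid only outside the ball of radius $\eps/\bar\eps$ rather than all the way down to $x_0$.
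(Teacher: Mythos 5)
Your proposal is correct and is essentially the paper's own route: the paper deduces the corollary from Theorem \ref{HIdg} by exactly this standard dyadic iteration (as in \cite{DFS}), with the iteration allowed as long as the effective flatness $(20(1-c))^k\eps$ stays below $\bar\eps$, which covers all radii down to $\eps/\bar\eps$; your observation that the bounds on $\|u^-\|$ and $\|f\|$ persist because $\eps_k\geq\eps$ is precisely the point that makes the degenerate case work. The only cosmetic remark is that the side case $20(1-c)\leq 1$ is most cleanly dispensed with by noting one may always decrease the universal constant $c$ in Theorem \ref{HIdg} so that $20(1-c)>1$, rather than running a separate (and, as written, slightly off) induction there.
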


Again, the proof of the Harnack inequality relies on the following lemma.

\begin{lem}\label{main2}There exists
a universal constant $\bar \ep>0$ such that if $u$ satisfies
\begin{equation*}  u^+(x) \geq U_0(x), \quad \text{in $B_1$}
\end{equation*}
with \be\label{dg} \|u^-\|_{L^\infty} \leq \ep^2, \quad \|f\|_{L^\infty} \leq \ep^4, \ee and at $\bar x=\dfrac{1}{5}e_n$ \be\label{u-p>ep2d}
u^+(\bar x) \geq U_0(\bar x_n + \ep), \ee then \be u^+(x) \geq
U_0(x_n+c\eps), \quad \text{in $\overline{B}_{1/2},$}\ee for some
$0<c<1$ universal. Analogously, if $$u^+(x) \leq U_0(x), \quad \text{in $B_1$}$$ and $$ u^+(\bar x) \leq U_0(\bar x_n - \eps),$$ then $$ u^+(x) \leq U_0(x_n - c \ep), \quad
\text{in $\overline{B}_{1/2}.$}$$
\end{lem}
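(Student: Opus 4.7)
The plan is to mirror the proof of Lemma \ref{main} in two steps, with a modified barrier that accounts for the degeneracy $\beta=0$. As in that proof, I will establish only the first statement (the second being symmetric) and omit the subscripts on the Pucci operators.

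For the first step, I will upgrade the pointwise information \eqref{u-p>ep2d} to a full lower bound on $\overline B_{1/20}(\bar x)$ via interior Harnack. Since $u^+\geq x_n^+$ in $B_1$ and $B_{1/10}(\bar x)\subset\{x_n>1/10\}$, the whole ball $B_{1/10}(\bar x)$ lies in $B_1^+(u)$, so the nonnegative function $u-x_n$ solves $\mathcal{F}(D^2(u-x_n))=f$ there (the linear subtraction preserves the equation). The classical Harnack inequality, together with \eqref{u-p>ep2d} and $\|f\|_\infty\leq\eps^4$, will give
\[
u(x)-x_n\geq c_0\eps\quad\text{on }\overline B_{1/20}(\bar x),
\]
for a universal $c_0>0$ once $\eps$ is small. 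This is the exact analogue of \eqref{u-p>cep} with $\alpha=1$.

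For the sliding step I reuse the radial barrier of Lemma \ref{main}: on $A:=B_{3/4}(\bar x)\setminus\overline B_{1/20}(\bar x)$ set $w(x)=c(|x-\bar x|^{-\gamma}-(3/4)^{-\gamma})$ with $\gamma$ large universal, extended by $1$ on $\overline B_{1/20}(\bar x)$, so that $\mathcal{M}^-(D^2w)\geq k>0$ on $A$ and $\psi:=1-w$ satisfies $\psi_n<0$ in a neighborhood of $\{x_n=0\}\cap A$. Because $U_0$ has no genuine negative phase, the family in Lemma \ref{main} must be perturbed, and I will slide
\[
v_t(x):=U_{\mu}\bigl(x_n-\eps c_0\psi(x)+t\eps\bigr),\qquad \mu:=\eps^2.
\]
A direct computation will show, on $F(v_t)\cap A$,
\[
(v_t^+)_\nu^2-(v_t^-)_\nu^2=(\alpha^2-\mu^2)\bigl(1-2\eps c_0\psi_n+\eps^2c_0^2|\nabla\psi|^2\bigr)>1,
\]
since $\alpha^2-\mu^2=1$ and $\psi_n<0$. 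On each phase, one-homogeneity of $\mathcal{M}^-$ will give
\[
\mathcal{F}(D^2v_t)\geq\mu\eps c_0\,\mathcal{M}^-(D^2w)\geq c_0k\,\eps^3>\eps^4\geq\|f\|_\infty,
\]
so $v_t$ is a strict classical subsolution of \eqref{fb} in $A$. After checking $v_0\leq u$ on $\overline B_{3/4}(\bar x)$ --- using Step 1 and $u^+\geq x_n^+$ on the positive side, and $v_0\geq -\mu=-\eps^2\geq -\|u^-\|_\infty$ on the negative side --- the argument of Lemma \ref{main} applies verbatim: letting $\bar t$ be the largest $t$ with $v_t\leq u$ on $\overline B_{3/4}(\bar x)$, the first touching point cannot lie on $\partial B_{3/4}(\bar x)$ (where $\psi=1$ separates $v_t$ from $u$), nor in the interior of $A$, nor on $F(v_t)\cap A$ (by the strict sub-solution properties and Definition \ref{defnhsol}); hence it must lie in $\overline B_{1/20}(\bar x)$, contradicting the Step~1 bound unless $\bar t\geq c_0$. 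Restricting to $B_{1/2}$ will then yield $u\geq U_0(x_n+c\eps)$.

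The main obstacle is the construction of $v_t$. Since $U_0$ has no negative part, the naive barrier $(x_n-\eps c_0\psi+t\eps)^+$ vanishes on a set where $u$ may reach $-\eps^2$, so $v_0\leq u$ already fails; and simply adding a uniform negative constant destroys the strict free boundary inequality, because both phases of the shifted barrier inherit the same gradient. The remedy is the small artificial slope $\mu$, which must satisfy three competing constraints: (i) the interior condition $\mathcal{F}(D^2v_t)>\|f\|_\infty$ on the negative phase forces $\mu\eps c_0k>\eps^4$, hence $\mu\gg\eps^3$; (ii) the initial comparison on the negative side forces $\mu\lesssim\|u^-\|_\infty\leq\eps^2$; and (iii) the discrepancy $(\alpha-1)x_n=O(\mu^2)=O(\eps^4)$ on the positive side must be absorbed by the Step~1 margin of order $\eps$, which is automatic since $\eps^4\ll\eps$. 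The hypothesis scalings $\|f\|_\infty\leq\eps^4$ and $\|u^-\|_\infty\leq\eps^2$ are precisely designed so that $\mu=\eps^2$ meets all three requirements simultaneously.
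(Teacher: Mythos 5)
Your Step 1 and the overall sliding scheme coincide with the paper's proof, and your free boundary computation $(v_t^+)_\nu^2-(v_t^-)_\nu^2=(\alpha^2-\mu^2)(1-2\eps c_0\psi_n+\eps^2c_0^2|\nabla\psi|^2)>1$ together with the interior bound $\mathcal M^-(D^2v_t)\geq \mu\eps c_0 k>\eps^4$ on the negative phase are correct in spirit. However, there is a genuine gap at the point where you start the sliding: the justification of $v_0\leq u$ in the set $\{u<0\}$ by ``$v_0\geq-\mu=-\eps^2\geq-\|u^-\|_\infty$'' is logically backwards. Knowing that both $v_0$ and $u$ are bounded below by $-\eps^2$ does not order them. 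The danger is precisely near the free boundary of the barrier: just below the surface $\{x_n=\eps c_0\psi\}$ your $v_0=\mu(x_n-\eps c_0\psi)$ is only of size $O(\eps^3)$ in absolute value, while the hypothesis \eqref{dg} alone allows $u$ to be as negative as $-\eps^2$ arbitrarily close to $\{x_n=0\}$; at such points $v_0>u$ and the comparison fails before the sliding even begins. The same unproved ordering is implicitly used when you exclude a touching point on $\partial B_{3/4}(\bar x)$ in the region $\{u<0\}$.

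The missing ingredient is the paper's auxiliary estimate \eqref{negu}: since $\{u<0\}\subset\{x_n\leq 0\}$ and $\mathcal M^+(D^2u^-)\geq -\mathcal F(D^2u)=-f>-\eps^4$ there, a comparison with the explicit barrier solving $\mathcal M^+(D^2w)=-\eps^2$ in $B_1\cap\{x_n<0\}$, $w=\eps^2$ on $\partial B_1\cap\{x_n<0\}$, $w=0$ on $\{x_n=0\}$, yields the linear decay $u^-\leq C\eps^2x_n^-$ in $B_{19/20}$. Only with this bound can one verify $v_0\leq u$ and the boundary separation on $\partial B_{3/4}(\bar x)$, and even then your choice of negative slope must be adjusted: you need the negative part of the barrier to have slope $C_1\eps^2$ with $C_1\geq C$ universal (as in the paper), not slope exactly $\mu=\eps^2$, since the constant $C$ in the decay estimate is in general larger than $1$; the free boundary inequality survives this change because $1-C_1^2\eps^4$ is still compensated by the factor $1-2\eps c_0\psi_n>1+c\eps$. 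A secondary (repairable) wrinkle of your $U_\mu$ normalization with $\alpha=\sqrt{1+\mu^2}>1$ is that the strict separation on $\partial B_{3/4}(\bar x)$ and the final contradiction in $\overline B_{1/20}(\bar x)$ only produce $\bar t\geq c_0-O(\eps^3)$ rather than $\bar t\geq c_0$; this is harmless for the conclusion but should be stated, whereas the paper's choice of exact slope $1$ on the positive phase avoids the issue altogether.
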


\begin{proof} We prove the first statement. The proof follows the same line as in the non-degenerate case. The dependence of the Pucci operators on $\lambda/n,\Lambda$ is omitted.

Since $x_n >0$ in $B_{1/10}(\bar x)$  and $u^+ \geq U_0$ in $B_1$ we get
\begin{equation*} B_{1/10}(\bar x) \subset B_1^+(u). \end{equation*}

Thus $u-x_n \geq
0$ and solves $\mathcal{F}(D^2(u-x_n)) =f$  in $B_{1/10}(\bar x)$. We can apply Harnack inequality and the assumptions \eqref{dg} and  \eqref{u-p>ep2d}
to obtain that (for $\ep$ small enough)
\be\label{u-p>cep2} u
- x_n \geq  c_0\ep \quad \text{in $\overline B_{1/20}(\bar x)$}. \ee
Let $w$ be as in the proof of Lemma \ref{main} and  $\psi =1-w$. Set
\begin{equation*} v(x)= (x_n - \ep c_0 \psi(x))^+ - \ep^2 C_1 (x_n - \ep c_0 \psi(x))^-, \quad x \in \overline B_{3/4}(\bar x),\end{equation*} and for $t
\geq 0,$
$$v_t(x)= (x_n - \ep c_0 \psi +  t \ep)^+ -   \ep^2 C_1(x_n - \ep c_0 \psi(x) + t \ep)^-, \quad x \in \overline B_{3/4}(\bar x).
$$
Here $C_1$ is a universal constant to be made precise later.
We claim that
$$  v_0(x)=v(x) \leq u(x) \quad x \in \overline B_{3/4}(\bar x).$$

This is readily verified in the set where $u$ is non-negative using that $u \geq x_n^+.$  To prove our claim in the set where $u$ is negative we wish to use the following fact:
\be\label{negu}u^- \leq C x_n^- \ep^2, \quad \text{in $B_{\frac{19}{20}}$, $C$ universal}.\ee This estimate is  obtained remarking that in the set $\{u<0\},$
$u^-$ satisfies
$$
\mathcal{ M}^+(D^2u^-)= - \mathcal M^-(D^2u)\geq - \mathcal{F}(D^2u)= - f>-\eps^4.
$$
Hence,  the inequality follows using that $\{u<0\} \subset \{x_n <0\},$ $\|u^-\|_\infty < \ep^2$ and the comparison principle with the function $w$ satisfying $$\mathcal{ M}^+(D^2w) = -\eps^2 \leq -\eps^4 \quad \text{in $B_1 \cap \{x_n <0\}$},$$ $$w=\eps^2 \quad \text{on $\p B_1 \cap \{x_n <0\}$}, \quad w=0 \quad \text{on $x_n=0$.}$$ Notice that $\mathcal{M}^+$ is  a convex operator, thus $w/\eps^2$ is an explicit barrier which has $C^{2,\alpha}$ estimates up to $\{x_n=0\}$. Hence $u^- \leq w \leq C x_n^- \eps^2$ in $B_{19/20} \cap \{x_n \leq 0\}$.

Thus our claim immediately follows from the fact that for $x_n<0$ and $C_1 \geq C,$
$$\ep^2 C_1(x_n - \ep c_0 \psi(x)) \leq Cx_n \ep^2.$$

Let $\bar t$ be the largest $t \geq 0$ such that
$$v_{t}(x) \leq u(x) \quad \text{in $\overline B_{3/4}(\bar x)$}.$$
% Thus at some
%$\overline B_{3/4}(\bar x)$ we have $$v_{\bar t}(\tilde x) =
%u(\tilde x).$$

We want to show that $\bar t \geq c_0.$ Then we get the desired statement. Indeed, it is easy to check that if
$$u(x) \geq v_{\bar t}(x) = (x_n - \ep c_0 \psi + \bar t \ep)^+ -   \ep^2 C_1 (x_n - \ep c_0 \psi(x) + \bar t \ep)^-\quad \text{in $ B_{3/4}(\bar x)$}$$ then
$$u^+(x) \geq U_0(x_n + c \ep) \quad \text{in $ B_{1/2} \subset \subset B_{3/4}(\bar x)$}$$
with $c$ universal, $c < c_0 \inf_{B_1/2} w.$

Suppose $\bar t < c_0$. Then at some
$\tilde x \in \overline B_{3/4}(\bar x)$ we have $$v_{\bar t}(\tilde x) =
u(\tilde x).$$ We show that such touching point can only occur on $\overline B_{1/20}(\bar x).$
Indeed, since $w\equiv 0$ on $\p B_{3/4}(\bar x)$ from the definition of $v_t$ we get that for $\bar t < c_0$
$$v_{\bar t}(x) =(x_n -\ep c_0 + \bar t \ep)^+ - \ep^2 C_1 (x_n -\ep c_0 + \bar t \ep)^-< u(x)\quad \textrm{on  $\p B_{3/4}(\bar x)$}.$$

In the set where $u \geq 0$ this can be seen using that $u \geq x_n^+$ while in the set where $u<0$ again we can use the estimate \eqref{negu}.

We now show that $\tilde x$ cannot belong to the annulus $A$.
Indeed, $$\mathcal{F}(D^2 v_{\bar t})\geq \mathcal M^-(D^2 v_{\bar t}) \geq \ep^3c_0 k(n) > \ep^4 \geq \|f\|_{\infty}, \quad \textrm{in
$A^+(v_{\bar t}) \cup A^-(v_{\bar t})$}$$ for $\ep$ small enough.

Also,
$$(v_{\bar t}^+)_\nu^2 - (v_{\bar t}^-)_\nu^2 = (1 -\ep^4 C_1^2)(1 + \ep^2 c_0^2 |\nabla \psi|^2  - 2\ep c_0 \psi_n) \quad \text{on $F(v_{\bar t}) \cap A$}.$$

Thus,
$$(v_{\bar t}^+)_\nu^2 - (v_{\bar t}^-)_\nu^2 > 1  \quad \text{on $F(v_{\bar t}) \cap A$}$$
as long as $\ep$ is small enough (as in the non-degenerate case one can check that $\inf_ {F(v_{\bar t}) \cap A} (-\psi_n)>c > 0$, $c$ universal).
 Thus,
$v_{\bar t}$ is a strict subsolution to $\eqref{fb}$ in $A$ which lies below $u$, hence by definition $\tilde x$ cannot belong to $A.$

Therefore, $\tilde x \in \overline B_{1/20}(\bar x)$ and
$$u(\tilde x)=v_{\bar t}(\tilde x)  = (\tilde x_n + \bar t \ep)< \tilde x_n +  c_0 \ep$$ contradicting \eqref{u-p>cep2}.
\end{proof}

\section{Improvement of flatness}

In this section we prove our key ``improvement of flatness" lemmas. As in Section \ref{section4}, we need to distinguish two cases. Recall that $\mathcal E(\lambda,\Lambda)$ is the class of all uniformly elliptic operators $\mathcal F(M)$ with ellipticity constants $\lambda,\Lambda$ and such that $\mathcal F(0)=0.$

\subsection{Non-degenerate case}
In this case $u$ is trapped between two translations of a two-plane solution $U_\beta$ with $\beta \neq 0.$ We show that when we restrict to smaller balls, $u$ is trapped between closer translations of another two-plane solution (in a different system of coordinates).
\begin{lem}[Improvement of flatness] \label{improv1}Let $u$  satisfy
\begin{equation}\label{flat_1}U_\beta(x_n -\ep) \leq u(x) \leq U_\beta(x_n +
\ep) \quad \text{in $B_1,$} \quad 0\in F(u),
\end{equation} with $0 <  \beta \leq L$ and
$$\|f\|_{L^\infty(B_1)} \leq \ep^2\beta.$$

If $0<r \leq r_0$ for $r_0$ universal, and $0<\ep \leq \ep_0$ for some $\ep_0$
depending on $r$, then

\begin{equation}\label{improvedflat_2_new}U_{\beta'}(x \cdot \nu_1 -r\frac{\ep}{2})  \leq u(x) \leq
U_{\beta'}(x \cdot \nu_1 +r\frac{\ep }{2}) \quad \text{in $B_r,$}
\end{equation} with $|\nu_1|=1,$ $ |\nu_1 - e_n| \leq \tilde C\ep$ , and $|\beta -\beta'| \leq \tilde C\beta \ep$ for a
universal constant $\tilde C.$

\end{lem}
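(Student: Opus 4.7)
\textbf{Proof proposal for Lemma \ref{improv1}.} The plan is a compactness/linearization argument modeled on the corresponding step in \cite{DFS}, using Theorem \ref{lineareg} as the key regularity input for the limit problem. Suppose the statement fails. Then I would fix some $r\leq r_0$ (to be chosen at the end) and extract sequences $\ep_k\to 0$, $\beta_k\in (0,L]$, operators $\mathcal F_k\in \mathcal E(\lambda,\Lambda)$ (with $\alpha_k=\sqrt{1+\beta_k^2}$), right-hand sides $f_k$ with $\|f_k\|_\infty\leq \ep_k^2\beta_k$, and solutions $u_k$ of \eqref{fb} satisfying the flatness hypothesis \eqref{flat_1} but for which no $\nu_1,\beta'$ exist verifying \eqref{improvedflat_2_new}. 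Up to extracting a subsequence I may also assume $\beta_k\to\bar\beta\in[0,L]$ and that the rescaled operators $\mathcal F_{\alpha_k\ep_k}$ and $\mathcal F_{\beta_k\ep_k}$ converge uniformly on compacts to $\mathcal F^+,\mathcal F^-\in\mathcal E(\lambda,\Lambda)$ (both homogeneous of degree one, since they are limits of rescalings).

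The next step is to introduce the normalized functions
\[
\tilde u_k(x)=\begin{cases}\dfrac{u_k(x)-\alpha_k x_n}{\alpha_k\ep_k}, & x\in B_1^+(u_k)\cup F(u_k),\\[2pt] \dfrac{u_k(x)-\beta_k x_n}{\beta_k\ep_k}, & x\in B_1^-(u_k),\end{cases}
\]
and to apply the non-degenerate Harnack inequality (Corollary \ref{corollary}) iteratively. This yields a uniform H\"older modulus of continuity for $\tilde u_k$ outside an $\ep_k/\bar\ep$-neighborhood of $F(u_k)$. Since the flatness assumption forces $F(u_k)\subset\{|x_n|\leq\ep_k\}$, Arzela--Ascoli (in the Hausdorff graph sense) gives a subsequential limit $\tilde u\in C(\overline{B_{1/2}})$. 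A direct computation shows that $\tilde u_k$ solves $\mathcal F_{\alpha_k\ep_k}(D^2\tilde u_k)=f_k/(\alpha_k\ep_k)$ on the positive phase and $\mathcal F_{\beta_k\ep_k}(D^2\tilde u_k)=f_k/(\beta_k\ep_k)$ on the negative phase, with right-hand sides going to $0$; by the compactness Lemma \ref{compact_delta}-type argument (passing viscosity inequalities to the limit) I get $\mathcal F^\pm(D^2\tilde u)=0$ in $B_{1/2}^\pm$. The critical task is to recover the transmission condition on $\{x_n=0\}$: linearizing $(u_\nu^+)^2-(u_\nu^-)^2=1$ around the two-plane configuration by means of a test function $P(x')+px_n^+-qx_n^-$ that touches $\tilde u$ from above at a point of $\{x_n=0\}$, and using that $\alpha_k^2-\beta_k^2=1$ identically, one deduces that $\tilde u$ is a viscosity solution of \eqref{TP} with $a=\bar\alpha^2=1+\bar\beta^2$ and $b=\bar\beta^2$. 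This verification is the main obstacle: the touching test must be perturbed and slid in the $e_n$ direction, then compared with a strict comparison subsolution of \eqref{fb} whose free-boundary slopes are adjusted to contradict the definition of viscosity solution, so that in the limit the transmission inequality $a p-b q\geq 0$ emerges; the symmetric argument gives the reverse inequality and hence $\tilde u$ solves \eqref{TP}.

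Once this is established, Theorem \ref{lineareg} provides $\tilde p,\tilde q$ and a vector $\tilde\nu'\in\R^{n-1}$ with
\[
|\tilde u(x)-\tilde u(0)-\tilde\nu'\cdot x'-\tilde p\,x_n^++\tilde q\,x_n^-|\leq \tilde C r^{1+\gamma},\qquad \bar\alpha^2\tilde p-\bar\beta^2\tilde q=0,
\]
on $B_r$, with $\tilde C$ universal. To close the argument I choose
\[
\nu_1=\dfrac{(\ep_k\tilde\nu',\,1)}{\sqrt{1+\ep_k^2|\tilde\nu'|^2}},\qquad \beta'=\beta_k(1+\ep_k\tilde q),
\]
so that $(\beta')^2$ and the induced $(\alpha')^2=1+(\beta')^2$ match, at order $\ep_k$, the coefficients $\alpha_k(1+\ep_k\tilde p)$ and $\beta_k(1+\ep_k\tilde q)$ appearing in $U_{\beta'}(x\cdot\nu_1)$; this matching is exactly the identity $\bar\alpha^2\tilde p=\bar\beta^2\tilde q$. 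A Taylor expansion of $U_{\beta'}(x\cdot\nu_1)$ and the $C^{1,\gamma}$ closeness of $\tilde u_k$ to $\tilde u$ then imply
\[
|u_k(x)-U_{\beta'}(x\cdot\nu_1)|\leq \ep_k\bigl(\tilde C r^{1+\gamma}+o(1)\bigr)\quad\text{in } B_r,
\]
which, after first fixing $r$ so that $\tilde C r^\gamma\leq 1/4$ and then sending $k\to\infty$, gives the desired flatness with oscillation $r\ep_k/2$ and contradicts the assumption. The bounds $|\nu_1-e_n|\leq \tilde C\ep_k$ and $|\beta'-\beta_k|\leq \tilde C\beta_k\ep_k$ follow directly from the choices and universal $C^{1,\gamma}$ bounds.
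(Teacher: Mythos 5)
Your overall architecture is the same as the paper's: argue by contradiction, normalize $u_k$ to $\tilde u_k$ in the two phases, use the Harnack estimates of Corollary \ref{corollary} plus Ascoli--Arzel\`a to extract a H\"older limit $\tilde u$, show $\tilde u$ solves the transmission problem \eqref{TP} with $a=\tilde\alpha^2$, $b=\tilde\beta^2$, and then close via Theorem \ref{lineareg} exactly as in \cite{DFS}. The difficulty is that the one step you yourself flag as ``the main obstacle'' --- passing the free boundary relation $(u_\nu^+)^2-(u_\nu^-)^2=1$ to the limit as the transmission condition $a\,p-b\,q\ge 0$ --- is left as a description of intent rather than an argument, and that step is the actual content of the paper's Step 2. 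A test function $A+px_n^+-qx_n^-+BQ^\gamma(x-y)$ touching $\tilde u$ cannot simply be ``slid and compared'': translated back to the $u_k$ scale it is not a legitimate strict comparison subsolution of \eqref{fb} in the sense of Definition \ref{defsub}, because its zero level set is not controlled and its interior subsolution property does not beat the right-hand side quantitatively.

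Concretely, the paper replaces the quadratic $Q^\gamma$ by the spherical barrier $\Gamma_k^\gamma$ built on the huge ball of radius $\tfrac{1}{B\ep_k}$ (so the candidate free boundary is an explicit sphere and the normal derivatives can be computed exactly), sets $\phi_k=a_k\Gamma_k^{\gamma+}-b_k\Gamma_k^{\gamma-}+\alpha_k(d_k^+)^2\ep_k^{3/2}+\beta_k(d_k^-)^2\ep_k^{3/2}$ with $a_k=\alpha_k(1+\ep_k p)$, $b_k=\beta_k(1+\ep_k q)$, and then verifies two things you never verify: (i) the correction $\ep_k^{3/2}d_k^2$ makes $\mathcal F_k(D^2\psi_k)\ge \alpha_k\ep_k^{3/2}\lambda/(2n)>\beta_k\ep_k^2\ge\|f_k\|_\infty$, i.e.\ the strict interior inequality dominates the admissible forcing (note $\ep_k^{3/2}$, not $O(\ep_k^2)$, is what makes this work); and (ii) on $F(\psi_k)$, where $|\nabla\Gamma_k|=1$ and $\nabla d_k^2=0$, the condition $(\psi_k^+)_\nu^2-(\psi_k^-)_\nu^2>1$ reduces, using $\alpha_k^2-\beta_k^2=1$, to $(\alpha_k^2p^2-\beta_k^2q^2)\ep_k+2(\alpha_k^2p-\beta_k^2q)>0$, which holds for large $k$ precisely because $ap-bq>0$. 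Only with this explicit family $\psi_k(x)=\phi_k(x+\ep_k c_k e_n)$ touching $u_k$ by below does the definition of viscosity solution yield the contradiction. Your Step 3 (the choice of $\nu_1$ and $\beta'$ and the use of \eqref{idapbq=0}) is consistent with the paper, which cites \cite{DFS} for that part; also, your remark that the limit operators $\mathcal F^\pm$ are homogeneous of degree one is neither needed nor automatic for subsequential limits of the rescalings, though nothing in your argument hinges on it.
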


\begin{proof}We divide the proof of this Lemma into 3 steps.

 \

\textbf{Step 1 -- Compactness.} Fix $r \leq r_0$ with $r_0$ universal (the precise $r_0$ will be given in Step 3). Assume by contradiction that we
can find a sequence $\ep_k \rightarrow 0$ and a sequence $u_k$ of
solutions to \eqref{fb} in $B_1$ with  for a sequence of operators $\mathcal F_k \in \mathcal E(\lambda,\Lambda)$ and right hand sides $f_k$ with $L^\infty$ norm bounded by $\ep_k^2\beta_k$, such that
\begin{equation}\label{flat_k}U_{\beta_k}(x_n -\ep_k) \leq u_k(x) \leq U_{\beta_k}(x_n +
\ep_k) \quad \text{for $x \in B_1$,  $0 \in F(u_k),$}
\end{equation} with $L \geq \beta_k > 0$,
but $u_k$ does not satisfy the conclusion \eqref{improvedflat_2_new} of the lemma.

Set ($\alpha_k^2=1+\beta_k^2$),$$ \tilde{u}_{k}(x)= \begin{cases}\dfrac{u_k(x) - \alpha_k x_n}{\alpha_k \ep_k}, \quad x \in
B_1^+(u_k) \cup F(u_k) \\ \ \\ \dfrac{u_k(x) - \beta_k x_n}{\beta_k\ep_k}, \quad x \in
B_1^-(u_k).\end{cases}
$$
Then \eqref{flat_k} gives,
\begin{equation}\label{flat_tilde} -1 \leq \tilde{u}_{k}(x) \leq 1
\quad \text{for $x \in B_1$}.
\end{equation}

From Corollary \ref{corollary}, it follows that the function
$\tilde u_{k}$ satisfies \be\label{HC}|\tilde u_{k}(x) - \tilde
u_{k} (y)| \leq C |x-y|^\gamma,\ee for $C$ universal and
$$|x-y| \geq \ep_k/\bar\ep, \quad x,y \in B_{1/2}.$$ From \eqref{flat_k} it clearly follows that
$F(u_k)$ converges to $B_1 \cap \{x_n=0\}$ in the Hausdorff
distance. This fact and \eqref{HC} together with Ascoli-Arzela
give that as $\ep_k \rightarrow 0$ the graphs of the
$\tilde{u}_{k}$ converge (up to a
subsequence) in the Hausdorff distance to the graph of a H\"older
continuous function $\tilde{u}$ over $B_{1/2}$. Also, up to a subsequence
$$\beta_k \to \tilde \beta \geq 0$$ and hence $$\alpha_k \to \tilde \alpha = \sqrt{1+\tilde \beta^2}.$$

\

\textbf{Step 2 -- Limiting Solution.} We now show that $\tilde u$
solves
\begin{equation}\label{Neumann}
  \begin{cases}
    \mathcal{F}^\pm(D^2\tilde u(x))=0 & \text{in $B_{1/2}^\pm \cap \{x_n \neq 0\}$}, \\
\ \\
a (\tilde u_n)^+ - b (\tilde u_n)^-=0 & \text{on $B_{1/2} \cap \{x_n =0\}$},
  \end{cases}\end{equation}
  with $\mathcal F^\pm \in \mathcal{E}(\lambda,\Lambda)$, and $a=\tilde \alpha^2 >0, b=\tilde \beta^2 \geq 0.$

Set,
$$
\mathcal{F}_k^+(M)=\frac{1}{\alpha_k\epsilon_k}\mathcal{F}_k(\alpha_k\epsilon_k M)
$$
$$
\mathcal{F}_k^-(M)=\frac{1}{\beta_k\epsilon_k}\mathcal{F}_k(\beta_k\epsilon_k M).
$$
 Then $\mathcal{F}_k^\pm \in \mathcal{E}(\lambda,\Lambda).$ Thus, up to extracting a subsequence, $$\mathcal F_k^\pm \to \mathcal F^\pm, \quad \text{uniformly on compact subsets of matrices.}$$
Moreover,
  $$
  \mathcal{F}_k^+(D^2\tilde{u}_k(x))=  \frac{f_k}{\epsilon_k\alpha_k},\quad B_1^+(u_k)
  $$
  and
  $$
  \mathcal{F}_k^-(D^2\tilde{u}_k(x))=  \frac{f_k}{\epsilon_k\beta_k},\quad B_1^-(u_k),
  $$
 with  $$|\mathcal{F}_k^{\pm }(D^2 u_{k}(x))| \leq \ep_k$$  in $B_1^\pm(u_k),$
since $\|f\|_\infty \leq \ep_k^2\beta_k. $

Then, by standard arguments (see Proposition 2.9 in \cite{CC}),   we conclude that  $\tilde
u$ solves in the viscosity sense
$$\mathcal{F}^\pm(D^2\tilde{u})=0 \quad \text{in  $B_{1/2}^\pm(\tilde{u})$}.$$

Next, we prove that $\tilde u$ satisfies the boundary
condition in \eqref{Neumann} in the viscosity sense. By a slight modification of the argument after the Definition \ref{def_visc} (ii'), it is enough to test that if
$\tilde \phi$ is a function of the form ($\gamma$ a specific constant to be made precise later)
$$\tilde \phi(x) = A+ px_n^+- qx_n^- + B Q^\gamma(x-y)$$ with $$Q^\gamma(x) = \frac 1 2 [\gamma x_n^2 - |x'|^2], \quad y=(y',0), \quad A \in \R, B >0, $$ and $$a p- b q>0,$$ then $\tilde \phi$ cannot touch $u$ strictly by below at a point $x_0= (x_0', 0) \in B_{1/2}.$

The analogous statement by above follows with a similar argument.

Suppose that such a $\tilde \phi$ exists and let $x_0$ be the touching point.
Let \be\label{biggamma}\Gamma^\gamma(x) = \frac{1}{2\gamma} [(|x'|^2 + |x_n-1|^2)^{-\gamma} - 1],\ee
where $\gamma$ is sufficiently large (to be made precise later),
 and let

\be\label{biggammak}\Gamma_{k}^\gamma(x)=  \frac{1}{B\ep_k}\Gamma^\gamma(B\ep_k(x-y)+ A B \ep_k^2 e_n).\ee

Now,  call

$$\phi_k(x)= a_k \Gamma^{\gamma +}_k(x) - b_k \Gamma^{\gamma -}_k(x) + \alpha_k (d_k^+(x))^2\ep_k^{3/2} +\beta_k(d_k^-(x))^2\ep_k^{3/2}$$
where

$$a_k=\alpha_k(1+\ep_k p), \quad b_k=\beta_k(1+\ep_k q)$$ and $d_k(x)$ is the signed distance from $x$ to $\p B_{\frac{1}{B\ep_k}}(y+e_n(\frac{1}{B\ep_k}-A\ep_k)).$

Finally, let

$$ \tilde{\phi}_{k}(x)= \begin{cases}\dfrac{\phi_k(x) - \alpha_k x_n}{\alpha_k \ep_k}, \quad x \in
B_1^+(\phi_k) \cup F(\phi_k) \\ \ \\ \dfrac{\phi_k(x) - \beta_k x_n}{\beta_k\ep_k}, \quad x \in
B_1^-(\phi_k).\end{cases}
$$

By Taylor's theorem

$$ \Gamma(x)= x_n + Q^{\gamma}(x) + O(|x|^3) \quad x \in
B_1,$$
thus it is easy to verify that
$$ \Gamma_k^\gamma(x)= A\ep_k + x_n + B\ep_kQ^{\gamma}(x-y) + O(\ep_k^2) \quad x \in
B_1,$$
with the constant in $O(\ep_k^2)$ depending on $A,B,$ and $|y|$ (later this constant will depend also on $p,q$).

It follows that in $
B_1^+(\phi_k) \cup F(\phi_k) $ ($Q^{\gamma, y}(x)=Q^\gamma(x-y)$)

 $$ \tilde{\phi}_{k}(x)= A+BQ^{\gamma, y} + p x_n + A\ep_k p + Bp\ep_kQ^{\gamma, y} + \ep_k^{1/2}d_k^2+ O(\ep_k)
$$
and analogously in  $
B_1^-(\phi_k)$

$$ \tilde{\phi}_{k}(x)= A+BQ^{\gamma,y} + q x_n + A\ep_k p + Bq\ep_kQ^{\gamma,y} + \ep_k^{1/2}d^2_k + O(\ep_k).
$$

Hence, $\tilde \phi_k$ converges uniformly to $\tilde \phi$ on $B_{1/2}$. Since $\tilde u_k$ converges uniformly to $\tilde u$ and $\tilde \phi$ touches $\tilde u$ strictly by below at $x_0$, we conclude that there exist a sequence of constants $c_k \to 0$ and of points $x_k \to x_0$ such that the function
$$\psi_k(x) = \phi_k(x+\ep_k c_k e_n)$$
touches $u_k$ by below at $x_k$. We thus get a contradiction if we prove that $\psi_k$ is a strict subsolution to our free boundary problem, that is \begin{equation*}\label{fbpsi} \left \{
\begin{array}{ll}
   \mathcal{F}_k(D^2 \psi_k) > \ep_k^2\beta_k \geq \|f_k\|_{\infty},   & \hbox{in $B_1^+(\psi_k) \cup B_1^-(\psi_k) ,$}\\
   \ \\
(\psi_k^+)_\nu^2 - (\psi_k^-)^2_\nu >1, & \hbox{on $F(\psi_k)$.}\\
\end{array}\right.
\end{equation*}

%Thus, we have
%\begin{equation*}
%|\nabla\Gamma_k^\gamma|\leq C, |D_{ij}\Gamma_k^\gamma |\leq C\varepsilon_k.
%\end{equation*}

%Let $S\in \mathcal{A}_{\lambda,\Lambda},$ and denote
%$
%\mathcal L^{S}=\mbox{Tr}(SD^2\cdot).
%$
%Away from the free boundary  we know that, for every matrix
 %we can write
 For $k$ large enough, say, in the positive phase of $\psi_k$ (denoting $\bar{x}=x+\varepsilon_k c_k e_n$ and dropping the dependance of the Pucci operator from $\lambda/n,\Lambda$ ), we have that
 $$\mathcal M^-(D^2 \psi_k(x)) \geq a_k \mathcal  M^-(D^2 \Gamma_k^\gamma(\bar x)) + \alpha_k \eps_k^{3/2} \mathcal M^-(d_k^2(\bar x)).$$

 As computed several times throughout the paper (see for example Lemma \ref{main}), for $\gamma$ large enough depending on $n,\lambda,\Lambda$ we have that $ \mathcal  M^-(D^2 \Gamma_k^\gamma(\bar x))>0.$
 Moreover, in the appropriate system of coordinates,
 $$D^2 d_k^2 (\bar x) = diag\{-d_k(\bar x) \kappa_1(\bar x),\ldots, -d_k(\bar x) \kappa_{n-1}(\bar x), 1\}$$
 where the
 $\kappa_i(\bar x)$ denote the curvature of the surface parallel to $\p B_{\frac{1}{B\eps_k}}(y+ e_n (\frac{1}{B\eps_k}-A\eps_k))$ which passes through $\bar x.$ Thus,

 $$\kappa_i (\bar x) =\frac{B \eps_k}{1-B \eps_kd_k(\bar x)}.$$

 For $k$ large enough we conclude that $ \mathcal M^-(d_k^2(\bar x))> \lambda/2n$ and hence,
 $$\mathcal F_k(D^2 \psi_k) \geq \mathcal M^-(D^2 \psi_k(x)) \geq \alpha_k \eps_k^{3/2} \frac{\lambda}{2n}  > \beta_k \eps_k^2 \geq \|f_k\|_{\infty},$$ as desired.

An analogous estimate holds in the
negative phase.

Finally, since on the zero level set $|\nabla \Gamma_k|=1$ and $|\nabla
d^2_k|=0,$ the free boundary condition reduces to show that
%\begin{equation*}
%a_k-G_k(b_k,x) >0.
%\end{equation*}
%Using the definition of $a_k, b_k$ we need to check that
%\begin{equation*}
%\alpha_k(1+\varepsilon_kp) -G_k(\beta_k(1+\varepsilon_k q),x) >0.
%\end{equation*}
$$
a_k^2-b_k^2>1.
$$

Recalling the definition of $a_k,b_k$ we need to check that
$$
(a_k^2p^2-\beta_k^2q^2)\varepsilon+2(\alpha_k^2p-\beta_k^2q)>0.
$$

This inequality holds for $k$ large since
$$
\tilde{\alpha}^2p-\tilde{\beta}^2q=a p -b q>0.
$$
%Recalling
%
%\begin{thm}\label{linearreg*}Let $\tilde u$ be a solution to \eqref{Neumann_p} in $B_{1/2}$ such that $\|\tilde u\|_\infty \leq 1$. There exists a universal constant $\bar C$ such that
%\be\label{lr}|\tilde u(x) - \tilde u(0) -(\nabla_{x'}\tilde u(0)\cdot x' + \tilde px_n^+ - \tilde qx_n^-)| \leq \bar C r^2, \quad \text{in $B_r$}\ee for all $r \leq 1/4$ and with $\tilde \alpha^2 \tilde p -\tilde \beta^2 \tilde q=0.$
%\end{thm}
%
% it suffices to check that
%\begin{equation*}
%\alpha_k(1+\varepsilon_kp) -G_k(\beta_k(1+\varepsilon_k q),0) -\ep_k^2 >0.
%\end{equation*}
%This inequality holds for $k$ large in view of the fact that
%\begin{equation*}
%\tilde\alpha p -\tilde\beta \tilde G^{\prime}_0(\tilde\beta) q >0.
%\end{equation*}

%and the first condition is satisfied for $k$ large enough.
%
%Finally, since of the zero level set $|\nabla \Gamma_k|=1$ and $|\nabla d^2_k|=0$ the free boundary condition reduces to showing that
%
%$$a_k^2-b_k^2 >1.$$
%
%Using the definition of $a_k, b_k$ we need to check that
%
%$$(\alpha_k^2p^2-\beta_k^2q^2)\ep_k + 2(\alpha_k^2 p -\beta_k^2 q)  >0. $$
%
%This inequality holds for $k$ large in view of the fact that $$\tilde \alpha^2 p -\tilde \beta^2 q >0. $$

Thus $\tilde u$ is a solution to the linearized problem.

\

\textbf{Step 3 -- Contradiction.} The conclusion now follows exactly as in the case of \cite{DFS}, using the regularity estimates for the solution of the transmission problem from Theorem \ref{lineareg}.
\end{proof}

\subsection{Degenerate case}In this case, the negative part of $u$ is negligible and the positive part is close to a one-plane solution (i.e. $\beta=0$). We prove below that in this setting only $u^+$ enjoys an improvement of flatness.

\begin{lem}[Improvement of flatness] \label{improv4}Let $u$  satisfy
\begin{equation}\label{flat}U_0(x_n -\ep) \leq u^+(x) \leq U_0(x_n +
\ep) \quad \text{in $B_1,$} \quad 0\in F(u),
\end{equation} with $$\|f\|_{L^\infty(B_1)} \leq \ep^4,$$ and $$ \|u^-\|_{L^\infty(B_1)} \leq \ep^2.$$

If $0<r \leq r_1$ for $r_1$ universal, and $0<\ep \leq \ep_1$ for some $\ep_1$
depending on $r$, then

\begin{equation}\label{improvedflat_2}U_0(x \cdot \nu_1 -r\frac{\ep}{2})  \leq u^+(x) \leq
U_0(x \cdot \nu_1 +r\frac{\ep }{2}) \quad \text{in $ B_r,$}
\end{equation} with $|\nu_1|=1,$ $ |\nu_1 - e_n| \leq C\ep$  for a
universal constant $C.$

\end{lem}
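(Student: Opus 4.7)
The plan is to mirror the three-step scheme of Lemma 5.1 (compactness, identification of the limiting linearized problem, and contradiction via the regularity result of Section 3), but with the rescaling done purely on the positive phase, since here the negative part is of smaller order $\ep^{2}$ and will be invisible in the limit. Argue by contradiction: suppose there exist $\ep_k\to 0$, operators $\mathcal F_k\in\mathcal E(\lambda,\Lambda)$ with right-hand sides $f_k$ satisfying $\|f_k\|_\infty\le\ep_k^{4}$, and solutions $u_k$ of \eqref{fb} with $\|u_k^-\|_\infty\le\ep_k^{2}$ trapped as in \eqref{flat}, but violating \eqref{improvedflat_2}.

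For the compactness step, set
\[
\tilde u_k(x)=\frac{u_k^+(x)-x_n}{\ep_k}\qquad\text{on } B_1^+(u_k)\cup F(u_k),
\]
so that $-1\le\tilde u_k\le 1$. Apply Corollary \ref{corollary4} iteratively on balls $B_r(x_0)\subset B_1$, noting that the hypotheses $\|u_k^-\|_\infty\le\ep_k^{2}$ and $\|f_k\|_\infty\le\ep_k^{4}=(\ep_k^{2})^{2}$ are exactly the scaling required by Theorem \ref{HIdg}. This gives a uniform Hölder modulus for $\tilde u_k$ away from $F(u_k)$. Since the flatness forces $F(u_k)\to B_{1/2}\cap\{x_n=0\}$ in Hausdorff distance, Ascoli–Arzelà yields (up to a subsequence) a Hölder continuous limit $\tilde u$ on $\overline{B_{1/2}^+}$. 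Simultaneously, up to passing to a further subsequence, the rescaled operators $\mathcal F_k^+(M):=\ep_k^{-1}\mathcal F_k(\ep_k M)\in\mathcal E(\lambda,\Lambda)$ converge uniformly on compacts to some $\mathcal F^*\in\mathcal E(\lambda,\Lambda)$.

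For the identification step, I claim $\tilde u$ solves the one-sided Neumann problem
\[
\mathcal F^*(D^2\tilde u)=0 \quad\text{in } B_{1/2}^+,\qquad \tilde u_n=0 \quad\text{on } B_{1/2}\cap\{x_n=0\},
\]
which is the $a=1,b=0$ specialization of the transmission problem \eqref{TP}. The interior equation $\mathcal F^*(D^2\tilde u)=0$ in $B_{1/2}^+$ follows from $|\mathcal F_k^+(D^2\tilde u_k)|=|f_k|/\ep_k\le\ep_k^{3}\to 0$ and Proposition 2.9 of \cite{CC}. For the Neumann condition I will use the characterization (ii') in Section 3 with $b=0$: given a touching polynomial $P(x')+p x_n^+$ with $p>0$ (resp. $p<0$) that touches $\tilde u$ strictly from below (resp. above) at $x_0\in B_{1/2}\cap\{x_n=0\}$, build a subsolution (resp. supersolution) to \eqref{fb} at scale $\ep_k$, and derive a contradiction with the definition of viscosity solution. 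Once this is verified, apply Theorem \ref{lineareg} to $\tilde u$ (extended, say, by reflection or simply using the interior-plus-boundary estimate up to $\{x_n=0\}$) to obtain
\[
|\tilde u(x)-\tilde u(0)-\nabla_{x'}\tilde u(0)\cdot x'|\le \tilde C\,r^{1+\gamma}\qquad\text{in } B_r^+,
\]
since $a\tilde p-b\tilde q=0$ with $b=0,a=1$ forces $\tilde p=0$. Choosing $r_1$ universal so that $\tilde C r_1^{\gamma}\le 1/4$, rotating to $\nu_1=(e_n+\ep_k\nabla_{x'}\tilde u(0))/|\cdot|$, and using uniform convergence $\tilde u_k\to\tilde u$ for $k$ large contradicts the failure of \eqref{improvedflat_2}.

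The main obstacle will be the Neumann verification, more specifically the construction of the comparison barrier for $u_k$ on the negative side. In the non-degenerate case one adds $\beta_k(d_k^-)^{2}\ep_k^{3/2}$; here $\beta_k=0$ but $u_k^-$ can be of order $\ep_k^{2}$, so the subsolution barrier must majorize $u_k^-$ while keeping $(\psi_k^+)_\nu^{2}-(\psi_k^-)_\nu^{2}>1$ across the contact free boundary. I plan to use the barrier
\[
\phi_k=a_k\Gamma_k^{\gamma,+}-C_1\ep_k^{2}\Gamma_k^{\gamma,-}+\ep_k^{3/2}(d_k^+)^{2},
\]
with $\Gamma_k^\gamma$ the rescaled radial function of \eqref{biggamma}–\eqref{biggammak}, $a_k=1+\ep_k p$, and $C_1$ large but universal; the $\ep_k^{2}$-factor on the negative part reproduces the bound \eqref{negu} derived in Lemma \ref{main2}, while the subsolution inequality $\mathcal F_k(D^2\phi_k)>\ep_k^{4}\ge\|f_k\|_\infty$ holds for $\gamma$ chosen as in Lemma \ref{main} together with the $\mathcal M^-$-positivity of $(d_k^+)^{2}$. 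The free boundary condition $(\phi_k^+)_\nu^{2}-(\phi_k^-)_\nu^{2}>1$ reduces, after cancellations, to $2p\ep_k+p^{2}\ep_k^{2}-C_1^{2}\ep_k^{4}>0$, valid for $k$ large since $p>0$. The rest of Step 2, and Step 3, then proceed exactly as in the non-degenerate case.
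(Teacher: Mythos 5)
Your three-step scheme (compactness of $\tilde u_k=(u_k^+-x_n)/\ep_k$, identification of the limit as a Neumann problem, contradiction via Theorem \ref{lineareg}) is exactly the paper's, and your interior computations and the free boundary inequality $(1+\ep_k p)^2-C_1^2\ep_k^4>1$ are fine. The genuine gap is in the one step you single out as the main obstacle: you never establish that your comparison function actually lies below $u_k$ in a \emph{full} two-phase neighborhood of the contact point, which is what Definition \ref{defnhsol} requires in order to reach a contradiction. The uniform convergence argument only yields $\psi_k\leq u_k$ on $\overline{B_1^+(u_k)}$, since $\tilde u_k$ is defined only there; on $\{u_k<0\}$ you must check separately that $u_k^-\leq C_1\ep_k^2\,\Gamma_k^{\gamma,-}$. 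The bound \eqref{negu} you invoke does not give this: it controls $u_k^-$ by $C\ep_k^2$ times the distance to the \emph{flat} plane (here $\{x_n=\ep_k\}$), whereas your barrier's negative part vanishes linearly on the sphere $\p\mathcal B$, which near the contact point sits an $\ep_k$-distance below that plane. At a point of $\{u_k<0\}$ at distance $\delta\ll\ep_k$ outside $\p\mathcal B$, \eqref{negu} only gives $u_k^-\lesssim\ep_k^3$, while your barrier requires $u_k^-\leq C_1\ep_k^2\delta$; no universal $C_1$ closes this, so the claim ``the $\ep_k^2$-factor reproduces \eqref{negu}'' does not suffice.

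This is precisely the point the paper handles differently. It first observes, from the positive-phase touching, that $F(u_k)$ near $x_k$ lies in $\{\psi_k\leq0\}$, so $\{u_k\leq 0\}$ is locally outside $\mathcal B$ and $u_k\geq 0$ on $\p\mathcal B$ there; it then compares $u_k^-$ (which satisfies $\mathcal M^+(D^2u_k^-)\geq-\ep_k^4$, $\|u_k^-\|\leq\ep_k^2$) with $\ep_k^2 w$, where $w$ solves $\mathcal M^+(D^2w)=-1$ with $w=0$ on $\p\mathcal B$, and glues $\Psi_k=\psi_k$ inside $\mathcal B$, $\Psi_k=-\ep_k^2 w$ outside. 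The glued $\Psi_k$ is then a strict comparison subsolution touching $u_k$ by below at $x_k\in F(u_k)\cap F(\Psi_k)$, with $(\Psi_k^-)_\nu=O(\ep_k^2)$, giving the contradiction. Your two-phase radial barrier could be salvaged by inserting exactly this intermediate comparison (so that $u_k^-\leq C\ep_k^2\,\mathrm{dist}(\cdot,\p\mathcal B)$ near $\p\mathcal B$, and then $C_1$ large universal works), but as written the step is missing, and without it the viscosity definition cannot be invoked at $x_k$.
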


\begin{proof}We argue similarly as in the non-degenerate case.

 \

\textbf{Step 1 -- Compactness.} Fix $r \leq r_0$ with $r_0$ universal (the precise $r_0$ will be given in Step 3). Assume by contradiction that we
can find a sequence $\ep_k \rightarrow 0$ and a sequence $u_k$ of
solutions to \eqref{fb} in $B_1$ for operators $\mathcal F_k \in \mathcal E(\lambda,\Lambda)$ and right hand sides $f_k$ with $L^\infty$ norm bounded by $\ep_k^4$, such that
\begin{equation}\label{flat_k4}U_{0}(x_n -\ep_k) \leq u_k(x) \leq U_{0}(x_n +
\ep_k) \quad \text{for $x \in B_1$,  $0 \in F(u_k),$}
\end{equation} with $$\|u_k^-\|_\infty \leq \ep_k^2$$
but $u_k$ does not satisfy the conclusion \eqref{improvedflat_2} of the lemma.

Set $$ \tilde{u}_{k}(x)= \dfrac{u_k(x) -  x_n}{ \ep_k}, \quad x \in
B_1^+(u_k) \cup F(u_k) $$
Then \eqref{flat_k} gives,
\begin{equation*}\label{flat_tilde} -1 \leq \tilde{u}_{k}(x) \leq 1
\quad \text{for $x \in
B_1^+(u_k) \cup F(u_k) $}.
\end{equation*}

As in the previous case, it follows from Corollary \ref{corollary4} that as $\ep_k \rightarrow 0$ the graphs of the
$\tilde{u}_{k}$ converge (up to a
subsequence) in the Hausdorff distance to the graph of a H\"older
continuous function $\tilde{u}$ over $B_{1/2} \cap \{x_n \geq 0\}$.

\

\textbf{Step 2 -- Limiting Solution.} We now show that $\tilde u$ solves the following Neumann problem
\begin{equation}\label{Neumann4}
  \begin{cases}
    \mathcal{ F}_*^+(D^2\tilde u)=0 & \text{in $B_{1/2} \cap \{x_n > 0\}$}, \\
\ \\
\tilde u_n =0 & \text{on $B_{1/2} \cap \{x_n =0\}$},
  \end{cases}\end{equation}
  with $\mathcal{F}_*^+ \in \mathcal E(\lambda,\Lambda).$
 As before, the interior condition follows easily, thus we focus on the boundary condition.
It is enough to test that if
$\tilde \phi$ is a function of the form ($\gamma$ a precise constant to be specified later)
$$\tilde \phi(x) = A+ px_n + B Q(x-y)$$ with $$Q^\gamma(x) = \frac 1 2 [\gamma x_n^2 - |x'|^2], \quad y=(y',0), \quad A \in \R, B>0,\eta >0$$ and $$p >0,$$  then  $\tilde \phi$ cannot touch $\tilde u$ strictly by below at a point $x_0= (x_0', 0) \in B_{1/2}$.
Suppose that such a $\tilde \phi$ exists and let $x_0$ be the touching point.

Let  $\Gamma_k^\gamma$ be as in the proof of the non-degenerate case (see \eqref{biggammak}). Call
$$\phi_k(x)= a_k \Gamma^{\gamma +}_k(x)+ (d_k^+(x))^2\ep_k^2, \quad a_k=(1+\ep_k p) $$
where
$d_k(x)$ is the signed distance from $x$ to $\p B_{\frac{1}{B\ep_k}}(y+e_n(\frac{1}{B\ep_k}-A\ep_k)).$

Let

$$ \tilde{\phi}_{k}(x)=\dfrac{\phi_k(x) -  x_n}{ \ep_k}. $$

As in the previous case, it follows that in $
B_1^+(\phi_k) \cup F(\phi_k) $ ($Q^{\gamma,y}(x)=Q^\gamma(x-y)$)

 $$ \tilde{\phi}_{k}(x)= A+BQ^{\gamma,y} + p x_n + A\ep_k p + Bp\ep_kQ^{\gamma, y} + \ep_kd_k^2+ O(\ep_k).
$$

Hence, $\tilde \phi_k$ converges uniformly to $\tilde \phi$ on $B_{1/2} \cap \{x_n \geq 0\}$. Since $\tilde u_k$ converges uniformly to $\tilde u$ and $\tilde \phi$ touches $\tilde u$ strictly by below at $x_0$, we conclude that there exist a sequence of constants $c_k \to 0$ and of points $x_k \to x_0$ such that the function
$$\psi_k(x) = \phi_k(x+\ep_k c_k e_n)$$
touches $u_k$ by below at $x_k \in B_1^+(u_k) \cup F(u_k)$. We claim that $x_k$ cannot belong to $B_1^+(u_k)$. Otherwise, in a small neighborhood $N$ of $x_k$  we would have that  (with a similar computation as in the non-degenerate case, and $\gamma$ large enough universal)
$$  \mathcal{M}^-(D^2\psi_k) > \ep_k^4 \geq \|f_k\|_{\infty} \geq  \mathcal{F}(D^2 u_k)\geq \mathcal{M}^-(D^2u_k), $$
$\psi_k < u_k$ in $N\setminus\{x_k\}, \psi_k(x_k) = u_k(x_k),$
a contradiction.

Thus $x_k \in F(u_k) \cap \p B_{\frac{1}{B\ep_k}}(y+e_n(\frac{1}{B\ep_k}-A\ep_k-\eps_kc_k)).$ For simplicity we call $$\mathcal B: =B_{\frac{1}{B\ep_k}}(y+e_n(\frac{1}{B\ep_k}-A\ep_k-\eps_kc_k)).$$ Let $\mathcal N$ be a neighborhood of $x_k$.
%Since $$\|u^-_k\|_\infty \leq \eps^2_k, \quad u_k^+ \geq (x_n-\eps_k)^+$$ as in the proof of Harnack inequality using the fact that  $x_k \in F(u_k) \cap \p \mathcal B$ we can conclude by the comparison principle that
%$$u^-_k  \leq c \eps^2_k (d(x, \p \mathcal B))^-, \quad \text{in $N_{\frac{3}{4}\rho}$}$$
%where $d$ denotes again the signed distance from $x$ to $\p \mathcal B.$
In the set $\{u_k <0\},$ $$\mathcal M^+(D^2 u^-_k) = -\mathcal M^-(D^2 u_k) \geq F(D^2 u_k) = f \geq -\eps^4.$$

Hence, since $\|u^-_k\|_\infty \leq \eps^2_k$, we can compare $u_k^-$ with the function $\eps_k^2 w$ where $w$ solves the following problem:
%by using that $\{u_k<0\} \subset \{x_n <\eps_k\},$ $\|u_k^-\|_\infty < \ep_k^2$ and the comparison principle with the function $w$ satisfying
$$\mathcal{ M}^+(D^2w) = -1  \quad \text{in $\mathcal N \setminus \overline{\mathcal B}$},$$ $$w=1 \quad \text{on $\p N \setminus \mathcal B $}, \quad w=0 \quad \text{on $\mathcal N \cap \p \mathcal B$}.$$

Let \begin{equation}\label{Psi}\Psi_k(x) = \begin{cases}\psi_k & \text{in $\mathcal N \cap \mathcal B$}\\  \ & \ \\  -\eps_k^2 w & \text{in $\mathcal N \setminus \mathcal B.$}\end{cases}\ee

Then  $\Psi_k$ touches $u_k$ strictly by below at $x_k \in F(u_k) \cap F(\Psi_k)$.
We reach a contradiction if we show that
$$(\Psi_k^+)_\nu^2 - (\Psi_k^-)^2_\nu >1, \quad \hbox{on $F(\Psi_k)$.}$$
This is equivalent to showing that (for $c$ small universal)
$$a_k^2 - c \eps_k^4 >1$$ or
$$(1+\ep_k p)^2 -c \ep_k^4> 1.$$
This holds for $k$ large enough since $p>0,$ and our proof is concluded.

\

\textbf{Step 3 -- Contradiction.} In this step we can argue as in the final step of the proof of Lemma 4.1 in [D]. \end{proof}

\section{Proof of the main Theorem.}

In this section we exhibit the proofs of our main results, Theorem \ref{flatmain1} and Theorem \ref{Lipmain}.
We recall the following elementary lemma from \cite{DFS} which holds for any continuous function $u.$

\begin{lem}\label{elementary} Let $u$ be a continuous function. If for $\eta>0$ small, $$\|u - U_{\beta}\|_{L^{\infty}(B_{2})} \leq \eta, \quad \text{$0 \leq \beta \leq L,$}$$ and
\begin{equation*} \{x_n \leq - \eta\} \subset B_2 \cap \{u^+(x)=0\} \subset \{x_n \leq \eta \},\end{equation*} then \begin{itemize}\item If $\beta \geq \eta^{1/3}$, then $$U_\beta(x_n - \eta^{1/3}) \leq u(x) \leq U_\beta(x_n + \eta^{1/3}),\quad \text{in $B_1$}$$ \\
\item If $\beta < \eta^{1/3},$ then $$U_0(x_n - \eta^{1/3}) \leq u^+(x) \leq U_0(x_n + \eta^{1/3}),\quad \text{in $B_1$}.$$
\end{itemize}\end{lem}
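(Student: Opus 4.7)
The statement is purely algebraic: the hypothesis controls $u$ uniformly by $U_\beta$ with error $\eta$, and the conclusion asks for a horizontal shift of size $\eta^{1/3}$. The proof plan is a short case analysis based on the two regimes for $\beta$, using the piecewise‑linear structure of $U_\beta$ together with the free boundary containment.

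\textbf{Case 1: $\beta \geq \eta^{1/3}$.} Here I would exploit the fact that $U_\beta$ is strictly increasing on $\mathbb R$ with slope $\alpha=\sqrt{1+\beta^2}$ on $\{t>0\}$ and slope $\beta$ on $\{t<0\}$. Thus for any $s>0$ and any $t\in\mathbb R$,
\[
U_\beta(t+s)-U_\beta(t)\ \geq\ \beta\,s,\qquad U_\beta(t)-U_\beta(t-s)\ \geq\ \beta\,s,
\]
(even across the kink at $0$, since the positive‑side slope $\alpha$ is larger). Taking $s=\eta^{1/3}$ and using $\beta\geq \eta^{1/3}$ gives a gain of at least $\eta^{2/3}\geq \eta$ on both sides (for $\eta$ small). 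Combined with $U_\beta(x_n)-\eta\leq u(x)\leq U_\beta(x_n)+\eta$ this yields the desired two‑sided bound.

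\textbf{Case 2: $\beta<\eta^{1/3}$.} Now I would use $\alpha-1=\beta^2/(\alpha+1)\leq \beta^2/2<\eta^{2/3}/2$, together with the free boundary containment $\{x_n\leq -\eta\}\subset\{u^+=0\}\subset\{x_n\leq \eta\}$ (the latter meaning $x_n>\eta\Rightarrow u(x)>0$). For the lower bound $u^+(x)\geq (x_n-\eta^{1/3})^+$: if $x_n\leq \eta^{1/3}$ the right‑hand side vanishes; if $x_n>\eta^{1/3}>\eta$ then $u(x)>0$, so $u^+(x)=u(x)\geq \alpha x_n-\eta\geq x_n-\eta^{1/3}$. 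For the upper bound $u^+(x)\leq (x_n+\eta^{1/3})^+$: if $x_n\leq -\eta^{1/3}\leq -\eta$ then $u^+(x)=0$; if $x_n\geq 0$ then $u(x)\leq \alpha x_n+\eta\leq x_n+\tfrac12\eta^{2/3}\!\cdot\! x_n+\eta\leq x_n+\eta^{1/3}$ in $B_1$ for $\eta$ small; and in the remaining strip $-\eta^{1/3}<x_n<0$, either $u^+(x)=0$ or else $x_n>-\eta$ by the free boundary condition, in which case $u(x)\leq \beta x_n+\eta\leq \eta\leq -\eta+\eta^{1/3}<x_n+\eta^{1/3}$ for $\eta$ small.

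\textbf{Obstacle / remarks.} There is no real analytic difficulty here; the only subtlety is the strip $-\eta^{1/3}<x_n<0$ in Case 2, where neither the uniform bound $|u-U_\beta|\leq \eta$ nor the free boundary inclusion alone forces $u^+$ to be small enough — one needs to combine them (using that on this strip $u^+>0$ implies $x_n>-\eta$, whence $x_n+\eta^{1/3}\geq \eta^{1/3}-\eta\geq\eta$ absorbs the error $\eta$ coming from $|u-U_\beta|\leq\eta$). All remaining inequalities are elementary consequences of $\eta$ being small (say $\eta\leq \eta_0$ universal, which is the meaning of ``$\eta>0$ small'' in the statement).
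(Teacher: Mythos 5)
Your proof is correct. The paper does not actually prove this lemma (it is recalled from \cite{DFS}), and your elementary case analysis is exactly the standard argument: in the non-degenerate case one only needs that $U_\beta$ is increasing with slope at least $\beta\geq\eta^{1/3}$, so a shift by $\eta^{1/3}$ absorbs the $L^\infty$ error $\eta$; in the degenerate case one combines $\alpha-1\leq\beta^2/2<\eta^{2/3}/2$ with the free-boundary inclusions, the only delicate strip being $-\eta^{1/3}<x_n<0$, which you handle correctly. As a minor remark, the lower bound in Case 2 does not even require the inclusion, since $u^+\geq u\geq \alpha x_n-\eta\geq x_n-\eta^{1/3}$ wherever $x_n>\eta^{1/3}$.
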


\subsection{Proof of Theorem \ref{flatmain1}.}
To complete the analysis of the degenerate case, we need to deal with the situation when $u$ is close to a one-plane solution and however the size of $u^-$ is not negligible. Precisely, we prove the following lemma.

\begin{lem}\label{finalcase}
Let $u$  solve \eqref{fb} in $B_2$ with $$\|f\|_{L^\infty(B_1)} \leq \ep^4$$ and satisfy
\begin{equation}\label{flat}U_0(x_n -\ep) \leq u^+(x) \leq U_0(x_n +
\ep) \quad \text{in $B_1,$} \quad 0\in F(u),
\end{equation}$$\|u^-\|_{L^{\infty}(B_2)} \leq \bar C\eps^2,  \quad \|u^-\|_{L^\infty(B_1)} > \ep^2,$$ for a universal constant $\bar C.$ If $\ep \leq \ep'$ universal,  then the rescaling
$$u_\ep(x) = \ep^{-1/2}u(\ep^{1/2}x)$$ satisfies in $B_1$
$$U_{\beta'}(x_n - C'\ep^{1/2}) \leq u_{\ep}(x) \leq U_{\beta'}(x_n + C'\ep^{1/2})$$ with $\beta' \sim \eps^2$ and $C'>0$ depending on $\bar C$.
\end{lem}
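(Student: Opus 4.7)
The proof splits into controlling $u_\ep^\pm$ separately. For the positive side, rescaling gives $U_0(x_n-\ep^{1/2})\le u_\ep^+(x)\le U_0(x_n+\ep^{1/2})$ on $B_{\ep^{-1/2}}$ by $1$-homogeneity of $U_0$, and since $\alpha'=\sqrt{1+(\beta')^2}=1+O(\ep^4)$ for any target $\beta'\sim\ep^2$, the gap between $U_0^+$ and $U_{\beta'}^+$ is $O(\ep^4)\ll C'\ep^{1/2}$; the positive-side $U_{\beta'}$-flatness holds for any $C'\ge 2$. Thus the entire argument reduces to the negative phase.

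The target on the negative side is to prove that $u^-$ is approximately linear of the form $\beta'(-x_n)$ with slope $\beta'\sim\ep^2$ and error of order $\beta'\ep$ on $B_{\ep^{1/2}}$ (in original coordinates $x\in B_2$); after rescaling by $\ep^{1/2}$, this is exactly the claimed $C'\ep^{1/2}$-flatness of $u_\ep$ around $U_{\beta'}$. I would normalize $v:=u^-/(\bar C\ep^2)$ so that $0\le v\le 1$ on $B_1$ with $v\equiv 0$ on $\{x_n>\ep\}$ (since $u^+\ge U_0(x_n-\ep)>0$ there forces $u^-=0$), and in $\Omega^-(u)$ the rescaled operator $\mathcal G(M):=(\bar C\ep^2)^{-1}\tilde{\mathcal F}(\bar C\ep^2M)\in\mathcal E(\lambda,\Lambda)$, where $\tilde{\mathcal F}(M):=-\mathcal F(-M)$, satisfies $\mathcal G(D^2v)=-f/(\bar C\ep^2)$ with right-hand side bounded by $\ep^2/\bar C$.

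The upper bound is produced by comparison with a barrier. Let $H_+$ solve $\mathcal G(D^2H_+)=0$ in $D_+:=B_1\cap\{x_n<2\ep\}$ with Dirichlet data $H_+=0$ on the flat top $\{x_n=2\ep\}$ and $H_+=1$ on the lateral part $\partial B_1\cap\{x_n<2\ep\}$, modified by a small quadratic perturbation to absorb the right-hand side. On the top $v=0=H_+$; on the lateral $v\le 1=H_+$; on $F(u)$ both $v=0$ and $H_+\ge 0$; hence $v\le H_+$ on $D_+\cap\Omega^-(u)$ by comparison. Proposition~\ref{BPR} applied at the flat top yields a universal $\beta_+\in(0,C]$ with $H_+(x)=\beta_+(2\ep-x_n)+O(|x|^{1+\alpha})$ near the origin. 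For the matching lower bound I would use $\|u^-\|_{L^\infty(B_1)}>\ep^2$ together with interior Harnack applied to $v$ in $\Omega^-(u)$ to produce a universal lower bound $v\ge c/\bar C$ on a slab well below $F(u)$, and then sandwich $v$ from below by $(c/\bar C)H_-$, where $H_-$ solves the analogous Dirichlet problem in $B_{3/4}\cap\{x_n<-2\ep\}$ with zero on the flat top $\{x_n=-2\ep\}$. Setting $\beta':=\bar C\ep^2\beta_V$ for some $\beta_V$ in the universal interval $[\beta_-,\beta_+]$, restricting to $|x|\le\ep^{1/2}$, and rescaling by $\ep^{1/2}$ gives the two-plane flatness claim.

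The main obstacle is reconciling the upper and lower slopes $\beta_\pm$: since the two barriers use flat tops at $\{x_n=\pm 2\ep\}$, they produce linear bounds displaced by $O(\ep)$, contributing an error of order $\beta'\ep$ in the original scale which is precisely the tolerance permitted by the lemma. The delicate point is to verify that a single slope $\beta_V$ is simultaneously compatible with both bounds and that the pointwise $C^{1,\alpha}$ remainder does not exceed this tolerance on $|x|\le\ep^{1/2}$. Should this quantitative step prove troublesome, a compactness-and-contradiction argument provides an alternative: along a contradicting sequence $(u_k,\ep_k)$, the rescaling $w_k(x):=u_k^-(\ep_k^{1/2}x)/\ep_k^2$ produces a bounded limit $w_\infty$ satisfying a fully nonlinear uniformly elliptic equation in $\{x_n<0\}$ and vanishing on $\{x_n=0\}$, for which boundary $C^{1,\alpha}$ regularity on the flat half-space contradicts the assumed failure.
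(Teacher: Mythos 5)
Your reduction to the negative phase and your upper-bound barrier are in the spirit of the paper's first step (which proves $u^-/\ep^2\le k|x_n-\ep|$, the analogue of your $H_+$ bound), but the core of the lemma is not reached, and the gap you yourself flag as ``the delicate point'' is fatal to the route as proposed. The conclusion demands that $u_\ep$ be trapped between $U_{\beta'}(x_n-C'\ep^{1/2})$ and $U_{\beta'}(x_n+C'\ep^{1/2})$ for a \emph{single} $\beta'$; at a point like $x_n=-1/2$ in the rescaled ball this pins the value of $u_\ep^-$ to relative accuracy $O(\ep^{1/2})$, i.e.\ the slope of $u^-$ must be identified up to a factor $1+O(\ep^{1/2})$. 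Your sandwich uses two unrelated barriers: $H_+$ with crude lateral data $1$, and $(c/\bar C)H_-$ with the crude Harnack lower bound $c/\bar C$ on a slab. These produce an upper slope $\sim\bar C\ep^2\beta_+$ and a lower slope $\sim c\ep^2\beta_-$ that agree only up to a universal multiplicative factor, not up to $1+O(\ep^{1/2})$; choosing ``some $\beta_V\in[\beta_-,\beta_+]$'' therefore does not yield the required trapping between $O(\ep)$-translates (in original coordinates) of one plane --- it only yields comparability $c_1\ep^2|x_n|\lesssim u^-\lesssim c_2\ep^2|x_n|$. The paper closes exactly this gap by a different device: it compares $v=u^-/\ep^2$ with the \emph{single} function $w$ solving $\mathcal M^-(D^2w)=0$ in $B_1\cap\{x_n<-\ep\}$ with $w=v$ (the solution's own values, not $0/1$ data) on the lateral boundary and $w=0$ on $\{x_n=-\ep\}$, and shows the additive estimate $|v-w|\le c\ep$ (see \eqref{w-v}); then both the upper and lower bounds on $v$ inherit the one slope $a=\p_nw(0,-\ep)$ from the pointwise boundary expansion of $w$, with $a\ge c>0$ by Harnack plus Hopf (the witness point where $v>1$ is pushed a fixed distance below $\{x_n=-\ep\}$ by the upper barrier \eqref{ke}), and $C|x|^2\le C\ep$ on $B_{\ep^{1/2}}$ keeps the error within the allowed $O(\ep)$ displacement.

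The compactness fallback you sketch is also not sufficient as stated. Under the rescaling $x\mapsto\ep_k^{1/2}x$ the nondegeneracy hypothesis $\|u_k^-\|_{L^\infty(B_1)}>\ep_k^2$ is witnessed at a point of $B_{\ep_k^{-1/2}}$ that may escape to infinity, so the limit can a priori be degenerate near the origin unless you first propagate the positivity to a fixed point by a Harnack/Hopf argument at unit scale --- which is essentially the paper's proof; moreover, since the operators $\mathcal F_k$ vary, the limit only lies in a Pucci class, so you would need boundary regularity/Liouville statements for that class, and in any case a purely qualitative limit does not by itself recover the quantitative rate $C'\ep^{1/2}$ with $C'$ depending only on $\bar C$ that the statement requires. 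The positive-phase part of your argument is fine and coincides with the paper's closing remark.
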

\begin{proof} As usually, we omit the dependence of the Pucci operators from $\lambda/n, \Lambda$. For notational simplicity we set $$v = \frac{u^-}{\ep^2}.$$

From our assumptions we can deduce that $$F(v) \subset \{-\ep \leq x_n \leq \ep\}$$ \be\label{neg}v \geq 0 \quad \text{in $B_2 \cap \{x_n \leq -\ep\}$}, \quad v \equiv 0 \quad \text{in  $B_2 \cap \{x_n > \ep\}.$}\ee

%Let
%$$
%G(M)=-\frac{1}{\epsilon}F(-\epsilon M).
%$$
%Then for every $M,N\in \mathcal{S},$ $N\geq0$
%\begin{equation}
%\begin{split}
%G_\epsilon (M+N)-G_\epsilon (M)=-\frac{1}{\epsilon}F(-\epsilon M-\epsilon N)+\frac{1}{\epsilon}F(-\epsilon M)=\frac{1}{\epsilon}\left(F(-\epsilon M)-F(-\epsilon M-\epsilon N)\right),
%\end{split}
%\end{equation}
%then recalling the uniformly ellipticity of $F$ we get
%$$
%\lambda\|N\|\leq G_\epsilon(M+N)-G_\epsilon(M)\leq \Lambda\|N\|,
%$$
%that is $G_\epsilon$ is still uniformly elliptic.
%Moreover
%$$
%G_\epsilon(D^2v)=-\frac{f}{\epsilon}.
%$$
%Hence,
Also,
$$
\mathcal{M}^+(D^2v)= \frac{1}{\eps^2}\mathcal M^+(-D^2u)= -\frac{1}{\eps^2} \mathcal M^-(D^2u) \geq \frac{1}{\eps^2} \mathcal F(D^2u)\geq-\epsilon^2
$$
in $B_2 \cap \{x_n < -\ep\},$ and \be\label{C}0 \leq v \leq C \quad \text{on $\p B_2$,}\ee \be\label{barx}v(\bar x) >1 \quad \text{at some point $\bar x$ in $B_1.$}\ee

Hence,  using comparison with the function $w$ such that $$\mathcal{ M}^+(D^2w) = -1  \quad \text{in $D:= B_2 \cap \{x_n <\eps\}$},$$ $$w=C \quad \text{on $\p B_2 \cap \{x_n <\eps\}$}, \quad w=0 \quad \text{on $\{x_n =\eps\}$}$$ ($\mathcal{M}^+$ is  a convex operator hence $w$ is an explicit barrier which has $C^{1,1}$ estimates up to $\{x_n=\eps\}$,)
we get that for some $k>0$ universal
\be\label{ke}v \leq k|x_n -\ep|, \quad \text{in $B_1$}.\ee  This fact forces the point $\bar x$ in \eqref{barx} to belong to $B_1\cap \{x_n < -\ep\}$ at a fixed distance $\delta$ from $x_n = -\ep.$

Analogously,
$$
\mathcal{M}^-(D^2v)= \frac{1}{\eps^2}\mathcal M^-(-D^2u)= -\frac{1}{\eps^2} \mathcal M^+(D^2u) \leq \frac{-1}{\eps^2} \mathcal F(D^2u)\leq \epsilon^2
$$
in $B_2 \cap \{x_n < -\ep\}.$ Thus if $w$ is such that $\mathcal{M}^-(D^2w)=0$ in $B_1 \cap \{x_n < -\ep\}$ such that $$w=0 \quad \text{on $B_1 \cap \{x_n=-\ep\}$}, \quad w=v \quad \text{on $\p B_1 \cap  \{x_n \leq  -\ep\}$},$$
then
$$\mathcal{M}^-(D^2(w+ \frac{1}{2\lambda}\ep^2(|x|^2-3))\geq  \ep^2.
$$
By the comparison  principle we conclude that
\be w+ \frac{1}{2\lambda}\ep^2(|x|^2-3) \leq v \quad \mbox{on}\quad B_1 \cap \{x_n < -\ep\}.\ee
Also, for $\ep$ small, in view of \eqref{ke} we obtain that
\be  w- k\ep(|x|^2-3) \geq v \quad \mbox{on}\quad \p (B_1 \cap \{x_n < -\ep\})\ee and hence also in the interior. Thus we conclude that
\be\label{w-v} |w-v| \leq c\eps \quad \text{in $B_1 \cap \{x_n < -\ep\}$}. \ee In particular this is true at $\bar x$ which forces \be\label{wbarx} w(\bar x) \geq 1/2.\ee By expanding $w$ around $(0, -\ep)$  we then obtain, say in $B_{1/2} \cap  \{x_n \leq -\ep\}$
$$|w - a |x_n + \ep|| \leq C |x|^2 +C\ep.$$
This combined with \eqref{w-v} gives that
\be |v - a|x_n+\ep|| \leq C\ep, \quad \text{in $B_{\ep^{1/2}} \cap  \{x_n \leq -\ep\}$.}\ee
Moreover, in view of \eqref{wbarx} and the fact that $\bar x$ occurs at a fixed distance from $\{x_n = -\ep\}$ we deduce from Hopf lemma that
$$a \geq c>0$$ with $c$ universal. In conclusion (see \eqref{ke})
\be\label{u-}  |u^- - b\ep|x_n+\ep|| \leq C\ep^3, \quad \text{in $B_{\ep^{1/2}} \cap  \{x_n \leq -\ep\}$,} \quad u^- \leq b\ep^2 |x_n-\ep|, \quad \text{in $B_1$}\ee with $b$ comparable to a universal constant.

Combining \eqref{u-} and the assumption \eqref{flat} we conclude that in $B_{\ep^{1/2}}$
\be (x_n - \ep)^+ - b\ep(x_n-C\ep)^- \leq u(x) \leq (x_n+\ep)^+ - b \ep (x_n+C\ep)^-\ee
with $C>0$ universal and $b$ larger than a universal constant. Rescaling, we obtain that in $B_1$
\be (x_n - \ep^{1/2})^+ - \beta'(x_n-C\ep^{1/2})^- \leq u_\ep(x) \leq (x_n+\ep^{1/2})^+ - \beta' (x_n+C\ep^{1/2})^-\ee
with $\beta' \sim  \ep^2$. We finally need to check that this implies the desired conclusion in $B_1$
$$ \alpha'(x_n - C\ep^{1/2})^+ - \beta'(x_n-C\ep^{1/2})^- \leq u_\ep(x) \leq \alpha'(x_n+C\ep^{1/2})^+ - \beta' (x_n+C\ep^{1/2})^-$$ with $\alpha'^2=1+\beta^2 \sim 1+ \ep^4.$ This clearly holds in $B_1$ for $\ep$ small, say  by possibly enlarging $C$ so that $C \geq 2.$

\end{proof}

We are finally ready to exhibit the proof of our main Theorem \ref{main_new}. Having provided all the necessary ingredients, the proof now follows as in \cite{DFS}. For the reader's convenience we present the details.

\

\textit{Proof of Theorem \ref{main_new}.} Let us fix $\bar r >0$ to be a universal constant such that
$$\bar r \leq r_0, r_1, 1/8,$$
with $r_0,r_1$ the universal constants in the improvement of flatness Lemmas \ref{improv1}-\ref{improv4}. Also, let us fix a universal constant $\tilde \ep>0$ such that
$$\tilde \ep \leq \ep_0(\bar r), \frac{\ep_1(\bar r)}{2}, \frac{1}{2\tilde C}, \frac{\ep_0(\bar r)^2}{2C'}, \frac{\ep'}{4}, C''$$ with $\ep_0,\ep_1,\ep', \tilde C, C', \bar C, $ the constants in the Lemmas \ref{improv1}-\ref{improv4}-\ref{finalcase} and $C''$ universal to be specified later.

Now, let $$\bar \ep = \tilde \ep^3.$$ We distinguish two cases. For notational simplicity we assume that $u$ satisfies our assumptions in the ball $B_2$ and $0 \in F(u)$.
%\begin{rem}
%We point out that to each rescaling, the solution still remains a solution of an equation with a possibly different operator with respect to the initial one, but belonging to the same class of operators, namely with the same eigenvalues.
%\end{rem}

\

\textit{Case 1.} $\beta \geq \tilde \ep.$

\

In this case, in view of Lemma \ref{elementary}and our choice of $\tilde \ep$, we obtain that $u$ satisfies the assumptions of Lemma \ref{improv1}, \begin{equation*}\label{flat*}U_\beta(x_n -\tilde \ep) \leq u(x) \leq U_\beta(x_n +
\tilde \ep) \quad \text{in $B_1,$} \quad 0\in F(u),
\end{equation*} with $0 <  \beta \leq L$ and
$$\|f\|_{L^\infty(B_1)} \leq \tilde \ep^3 \leq \tilde \ep^2\beta.$$ Thus we can conclude that, ($\beta_1=\beta'$)
\begin{equation}U_{\beta_1}(x \cdot \nu_1 -\bar r\frac{\tilde \ep}{2})  \leq u(x) \leq
U_{\beta_1}(x \cdot \nu_1 +\bar r\frac{\tilde \ep }{2}) \quad \text{in $B_{\bar r},$}
\end{equation} with $|\nu_1|=1,$ $ |\nu_1 - e_n| \leq \tilde C\tilde \ep$ , and $|\beta -\beta_1| \leq \tilde C\beta \tilde \ep$. In particular, by our choice of $\tilde \ep$ we have
$$\beta_1 \geq \tilde \ep/2.$$We can therefore rescale and iterate the argument above. Precisely, set ($k=0,1,2....$)
$$\rho_k = \bar r^k, \quad \ep_k = 2^{-k}\tilde \ep$$
and $$\mathcal F_k(M)= \rho_k \mathcal F(\frac{1}{\rho_k} M), \quad u_k(x) = \frac{1}{\rho_k} u(\rho_k x), \quad f_k(x) = \rho_k f(\rho_k x).$$ Notice that $F_k \in \mathcal E(\lambda,\Lambda)$ hence our flatness theorem holds.

Also, let $\beta_k$ be the constants generates at each $k$-iteration, hence satisfying ($\beta_0=\beta$)
$$|\beta_k-\beta_{k+1}| \leq \tilde C \beta_k \ep_k.$$
Then  we obtain by induction that each $u_k$ satisfies
\begin{equation}U_{\beta_k}(x \cdot \nu_k -\ep_k)  \leq u_k(x) \leq
U_{\beta_k}(x \cdot \nu_k +\ep_k ) \quad \text{in $B_1,$}
\end{equation} with $|\nu_k|=1,$ $ |\nu_k - \nu_{k+1}| \leq \tilde C\tilde \ep_k$ ($\nu_0=e_n$.)

\

\textit{Case 2.}  $\beta < \tilde \ep.$

\

In view of Lemma \ref{elementary} we conclude that
\begin{equation}U_0(x_n -\tilde \ep) \leq u^+(x) \leq U_0(x_n +
\tilde \ep) \quad \text{in $B_1.$}
\end{equation}  Moreover, from the assumption \eqref{initialass} and the fact that $\beta < \tilde \ep$ we also obtain that
$$\|u^-\|_{L^\infty (B_1)} < 2\tilde \ep.$$
Call $\ep' = 2\tilde \ep.$ Then $u$ satisfies the assumptions of the (degenerate) improvement of flatness Lemma \ref{improv4}.
\begin{equation}U_0(x_n -\ep') \leq u^+(x) \leq U_0(x_n +
\ep') \quad \text{in $B_1,$}
\end{equation} with $$\|f\|_{L^{\infty}(B_1)} \leq (\ep')^3, \quad \|u^-\|_{L^\infty(B_1)} < \ep'.$$
We conclude that
\begin{equation}U_0(x \cdot \nu_1 -\bar r\frac{\ep}{2})  \leq u^+(x) \leq
U_0(x \cdot \nu_1 +\bar r\frac{\ep }{2}) \quad \text{in $ B_{\bar r},$}
\end{equation} with $|\nu_1|=1,$ $ |\nu_1 - e_n| \leq C\ep'$  for a
universal constant $C.$
We now rescale as in the previous case and set ($k=0,1,2....$)
$$\rho_k = \bar r^k, \quad \ep_k = 2^{-k}\ep'$$
and $$\mathcal F_k(M)= \rho_k \mathcal F(\frac{1}{\rho_k} M),  \quad u_k(x) = \frac{1}{\rho_k} u(\rho_k x), \quad f_k(x) = \rho_k f(\rho_k x).$$
We can iterate our argument and obtain that (with $|\nu_k|=1,$ $ |\nu_k - \nu_{k+1}| \leq C\ep_k$)
\begin{equation}U_0(x \cdot \nu_k -\ep_k)  \leq u_k^+(x) \leq
U_0(x \cdot \nu_k +\ep_k) \quad \text{in $ B_{1},$}
\end{equation}
as long as we can verify that
$$\|u_k^-\|_{L^\infty(B_1)} < \ep_k^2.$$
Let $\bar k$ be the first integer $\bar k\geq 1$ for which this fails, that is
$$\|u_{\bar k}^-\|_{L^\infty(B_1)} \geq \ep_{\bar k}^2,$$ and $$\|u_{\bar k-1}\|_{L^\infty(B_1)} < \ep_{\bar k-1}^2.$$
Also, $$U_0(x \cdot \nu_{\bar k-1} -\ep_{\bar k-1})  \leq u_{\bar k-1}^+(x) \leq
U_0(x \cdot \nu_{\bar k-1} +\ep_{\bar k_-1}) \quad \text{in $ B_{1}.$}
$$
As argued several times, we can then conclude from the comparison principle that
$$u^-_{\bar k-1} \leq M|x_n-\ep_{\bar k-1}|\ep^2_{\bar k-1}\quad \text{in $ B_{19/20},$}
$$ for a universal constant $M>0. $ Thus, by rescaling we get that
$$\|u^-_{\bar k}\|_{L^\infty(B_2)} < \bar C \ep_{\bar k}^2$$ with $\bar C$ universal depending on the fixed $\bar r$. We obtain that $u_{\bar k}$ satisfies all the assumptions of that Lemma and hence  the rescaling
$$v(x) = \ep_{\bar k}^{-1/2}u_{\bar k}(\ep_{\bar k}^{1/2}x)$$ satisfies in $B_1$
$$U_{\beta'}(x_n - C'\ep_{\bar k}^{1/2}) \leq v(x) \leq U_{\beta'}(x_n + \bar C'\ep_{\bar k}^{1/2})$$ with $\beta' \sim \eps^2_{\bar k}.$
Call $\eta=\bar C \ep_{\bar k}^{1/2}.$
Then $v$ satisfies our free boundary problem in $B_1$ for the operator
$$\mathcal G(M)= \eps_{\bar k}^{1/2} \mathcal F_{\bar k}(\frac{1}{\eps_{\bar k}^{1/2}} M) \in \mathcal E(\lambda,\Lambda)$$
 with right hand side $$g(x)= \ep_{\bar k}^{1/2} f_{\bar k}(\ep_{\bar k}^{1/2}x)$$ and the flatness assumption
$$U_{\beta'}(x_n -\eta) \leq v(x) \leq U_{\beta'}(x_n + \eta)$$
Since $\beta' \sim \ep_{\bar k}^2$ with a universal constant, $$\|g\|_{L^\infty(B_1)} \leq \ep_{\bar k}^{1/2} \ep_{\bar k}^{4} \leq \eta^2 \beta'$$
as long as $\tilde \ep \leq C''$ universal depending on $\bar C$. In conclusion $v$ falls under the assumptions of the (non-degenerate) improvement of flatness Lemma \ref{improv1} and we can use an iteration argument as in Case 1.

\qed

   \subsection{Proof of Theorem \ref{Lipmain}.} To provide the proof of Theorem \ref{Lipmain}, we use the following Liouville type result for global viscosity solutions to a two-phase homogeneous free boundary problem, that could be of independent interest.

\begin{lem}\label{Liouville} Let $U$ be a global viscosity solution to \begin{equation}\label{fbglobal} \left \{
\begin{array}{ll}
   \mathcal G(D^2U) = 0,   & \hbox{in $\{U>0\}\cup \{U \leq 0\}^0,$}\\
\ \\
(U_\nu^+)^2 - (U_\nu^-)^2= 1, & \hbox{on $F(U):= \partial \{U>0\}.$} \\
\end{array}\right.
\end{equation}
Assume that $\mathcal G \in \mathcal E(\lambda,\Lambda)$ and  $\mathcal G$ is homogeneous of degree 1. Also, $F(U) = \{x_n =g(x'), x' \in \R^{n-1}\}$ with $Lip(g) \leq M$.  Then $g$ is linear and $U(x)=U_\beta(x)$ for some $\beta \geq 0.$
\end{lem}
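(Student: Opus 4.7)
The plan is to combine a blow-down procedure with Feldman's regularity result \cite{F1} and the iterative improvement of flatness lemmas established in the paper.

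First I would set $U_R(x)=U(Rx)/R$. Since $\mathcal G$ is homogeneous of degree one, each $U_R$ is a global viscosity solution to \eqref{fbglobal} with free boundary $\{x_n=g_R(x')\}$, $g_R(x')=g(Rx')/R$, still Lipschitz with constant $M$. The Lipschitz control on $U$, Arzel\`a-Ascoli, and the compactness Lemma \ref{compact_delta} yield, along a subsequence $R\to\infty$, $U_R\to U_\infty$ locally uniformly and $g_R\to g_\infty$ locally uniformly. The limit $U_\infty$ is a global viscosity solution to \eqref{fbglobal}, and both $U_\infty$ and $g_\infty$ are $1$-homogeneous.

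Next I would apply Feldman's Lipschitz-implies-$C^{1,\gamma}$ theorem \cite{F1} to $U_\infty$, so that $g_\infty$ is $C^{1,\gamma}$ in a neighborhood of the origin. Combined with $g_\infty(\lambda x')=\lambda g_\infty(x')$ and the expansion $g_\infty(x')=\nabla g_\infty(0)\cdot x'+o(|x'|)$, letting $\lambda\to 0^+$ forces $g_\infty(x')=\nabla g_\infty(0)\cdot x'$ exactly, so $g_\infty$ is linear and, after a rotation, $F(U_\infty)=\{x_n=0\}$. To identify $U_\infty$ with a two-plane solution I would set $\alpha:=\inf_{x_n>0}U_\infty^+/x_n$, which is finite by the Lipschitz bound and positive by Hopf's lemma combined with the $0$-homogeneity of the ratio. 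The function $v:=U_\infty^+-\alpha x_n$ is non-negative, $1$-homogeneous, vanishes on $\{x_n=0\}$, and satisfies $\mathcal G(D^2 v)=0$, hence lies in the Pucci class. The definition of $\alpha$ supplies either an interior zero of $v$ (forcing $v\equiv 0$ by the strong maximum principle) or a boundary point where $\partial_n v=0$ (again forcing $v\equiv 0$ by Hopf's lemma). The symmetric argument in $\{x_n<0\}$ yields $U_\infty^-=\beta x_n^-$, and the free boundary condition gives $\alpha^2-\beta^2=1$, so $U_\infty=U_\beta(x_n)$.

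To propagate this rigidity from $U_\infty$ to $U$, I would run the improvement of flatness iteration backwards in scale. For any $\bar\epsilon>0$, the blow-down convergence provides $R=R(\bar\epsilon)$ such that, after a rotation,
\[
U_\beta(x_n-\bar\epsilon)\leq U_R(x)\leq U_\beta(x_n+\bar\epsilon),\qquad x\in B_1.
\]
Iterating the non-degenerate improvement of flatness Lemma \ref{improv1} (when $\beta>0$) or the degenerate Lemma \ref{improv4} combined with Lemma \ref{finalcase} (when $\beta=0$) produces approximants $U_{\beta_k}(x\cdot\nu_k)$ with flatness $\bar\epsilon/2^k$ at scale $r^k$, and $(\beta_k,\nu_k)\to(\beta_*,\nu_*)$ with $|\beta_*-\beta|+|\nu_*-e_n|=O(\bar\epsilon)$. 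Undoing the blow-down scaling, $U$ has flatness $\bar\epsilon\, 2^{-k}$ at scale $r^k R$; fixing any target scale $s>0$ and choosing $k(R)$ with $r^{k(R)}R\sim s$ gives flatness of order $\bar\epsilon(s/R)^\gamma$ at scale $s$, while the approximant converges to $U_\beta(x_n)$ as $R\to\infty$. Sending $R\to\infty$ forces $U\equiv U_\beta(x_n)$ on $B_s$, and since $s$ is arbitrary, globally. The hard part is precisely this last step: one must track the drift of the two-plane parameters $(\beta_k,\nu_k)$ along two coupled parameters (the iteration index $k$ and the blow-down scale $R$) and interchange the resulting limits, and in the degenerate case $\beta=0$ one must further restore the smallness condition $\|u^-\|_\infty\leq C\epsilon^2$ required by Lemma \ref{improv4} between iterations, in the spirit of Lemma \ref{finalcase}.
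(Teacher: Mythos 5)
Your strategy diverges from the paper's, and it contains a genuine gap at its first substantive step: you assert that the blow-down limit $U_\infty$ (and hence $g_\infty$) is $1$-homogeneous. Nothing in your construction, and nothing in the paper, justifies this. Homogeneity of blow-ups/blow-downs is normally extracted from a monotonicity formula (Weiss- or ACF-type), and no such tool is available for this fully nonlinear two-phase problem; locally uniform convergence of $U_R=U(R\,\cdot)/R$ along a subsequence gives no homogeneity whatsoever. Your classification of $U_\infty$ leans on homogeneity twice --- first to upgrade the $C^{1,\gamma}$ regularity of $g_\infty$ from \cite{F1} to exact linearity, and second to guarantee that $\inf_{\{x_n>0\}}U_\infty^+/x_n$ is attained so that the strong maximum principle/Hopf argument applies --- so the argument breaks down there. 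The final ``propagation of rigidity'' step is also more fragile than you indicate: the parameters $(\beta_k,\nu_k)$ produced by Lemmas \ref{improv1}--\ref{improv4} only remain within $O(\bar\epsilon)$ of $(\beta,e_n)$ unless the initial flatness is sent to $0$ along the chosen subsequence, and in the degenerate case the hypothesis $\|u^-\|_{L^\infty}\leq \epsilon^2$ must be re-established at every scale --- but in fact none of that machinery is needed.

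The paper's proof is much more direct and avoids blow-down limits entirely. Since $\mathcal G$ is homogeneous of degree one and $f\equiv 0$, every rescaling $U_R(x)=U(Rx)/R$ is again a global solution of \eqref{fbglobal}, and its free boundary graph $g_R(x')=g(Rx')/R$ has the same Lipschitz constant $M$. Applying Feldman's theorem \cite{F1} in $B_1$ to each $U_R$ gives the \emph{same} universal estimate $|g_R(x')-g_R(0)-\nabla g_R(0)\cdot x'|\leq C|x'|^{1+\alpha}$ for every $R$, which after undoing the scaling reads $|g(y')-g(0)-\nabla g(0)\cdot y'|\leq C R^{-\alpha}|y'|^{1+\alpha}$ on the ball of radius $R$ in $\R^{n-1}$; letting $R\to\infty$ forces $g$ to be exactly linear, and then $U=U_\beta$ follows (for instance by the same scaling trick applied to $U$ in each half-space, using boundary $C^{1,\alpha}$ estimates for $\mathcal G(D^2U)=0$ with zero data on the hyperplane, together with the free boundary condition to get $\alpha^2-\beta^2=1$). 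If you replace your blow-down and improvement-of-flatness transfer by this scaling argument applied directly to $U$, the proof closes.
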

\begin{proof}Assume for simplicity, $0 \in F(U)$. Also, balls (of radius $\rho$ and centered at $0$) in $\R^{n-1}$ are denoted by $\mathcal B_\rho.$

By the regularity theory in \cite{F1} , since $U$ is a solution in $B_2$, the free boundary $F(U)$ is $C^{1,\gamma}$ in $B_1$ with a bound depending only on $n,\lambda,\Lambda$ and $M$. Thus,
$$|g(x') - g(0) -\nabla g(0) \cdot x'| \leq C |x'|^{1+\alpha}, \quad x' \in \mathcal B_1 $$ with $C$ depending only on $n, \lambda, \Lambda, M.$
Moreover, since $U$ is a global solution, the rescaling
$$g_R(x') = \frac{1}{R} g(Rx'), \quad x' \in \mathcal B_2$$
which preserves the same Lipschitz constant as $g$, satisfies the same inequality as above i.e.
$$|g_R(x') - g_R(0) -\nabla g_R(0) \cdot x'| \leq C |x'|^{1+\alpha}, \quad x' \in \mathcal B_1.$$
This reads,
$$|g(Rx') - g(0) -\nabla g(0) \cdot Rx'| \leq C R |x'|^{1+\alpha}, \quad x' \in \mathcal B_1.$$
Thus,
$$|g(y') - g(0) -\nabla g(0) \cdot y'| \leq C \frac{1}{R^\alpha}|y'|^{1+\alpha}, \quad y' \in \mathcal B_R.$$

Passing to the limit as $R \to \infty$ we obtain the desired claim.
\end{proof}

\

Now the proof of Theorem \ref{Lipmain}, follows exactly as in the Laplacian case \cite{DFS}.

\textit{Proof of Theorem $\ref{Lipmain}.$} Let $\bar \eps$ be the universal constant in Theorem \ref{main_new}. Consider the blow-up sequence $$u_k(x) = \frac{u(\delta_k x)}{\delta_k}$$ with $\delta_k \to 0$ as $k \to \infty$. Each $u_k$ solves \eqref{fb} with operator $\mathcal F_k$ and right hand side $f_k$ given by $$\mathcal F_k(M)= \delta_k F_k(\frac{1}{\delta_k} M) \in \mathcal E(\lambda, \Lambda), \quad f_k(x) = \delta_k f(\delta_k x)$$ and $$\|f_k (x)\| \leq  \delta_k \|f\|_{L^\infty} \leq \bar \eps$$ for $k$ large enough. Standard arguments (see for example \cite{ACF}) using the uniform Lischitz continuity of the $u_k$'s and the nondegeneracy of their positive part $u_k^+$ (see  Lemma \ref{deltand}) imply that (up to a subsequence)
$$u_k \to \tilde u \quad \text{uniformly on compacts}$$
and
$$\{u_k^+=0\} \to \{\tilde u = 0\} \quad \text{in the Hausdorff distance}.$$

Moreover, up to a subsequence, the $\mathcal F_k$ converge uniformly on compact subsets of matrices to an operator $\tilde{\mathcal F} \in \mathcal E(\lambda, \Lambda).$ Since all the $\mathcal F_k$'s are homogeneous of degree 1, also $\tilde{\mathcal F}$ is homogeneous of degree 1.
The blow-up limit $\tilde u$ solves the global two-phase free boundary problem
\begin{equation}\label{fbglobal*} \left \{
\begin{array}{ll}
    \tilde{\mathcal F} (D^2 \tilde u) = 0,   & \hbox{in $\{\tilde u>0\} \cup \{\tilde u \leq 0\}^0$} \\
\ \\
 (\tilde u_\nu^+)^2 - (\tilde u_\nu^-)^2= 1, & \hbox{on $F(\tilde u):= \p \{\tilde u>0\}.$}  \\
\end{array}\right.
\end{equation}

Since $F(u)$ is a Lipschitz graph in  a neighborhood of 0, it follows from Lemma \ref{Liouville} that $\tilde u$ is a two-plane solutions, $\tilde u= U_\beta$ for some $\beta \geq 0$. Thus, for $k$ large enough
$$\|u_k - U_\beta\|_{L^\infty} \leq \bar \ep$$
and
\begin{equation*} \{x_n \leq - \bar \ep\} \subset B_1 \cap \{u^+_k(x)=0\} \subset \{x_n \leq \bar \ep \}.\end{equation*}  Therefore, we can apply our flatness Theorem \ref{main_new} and conclude that $F(u_k)$ and hence $F(u)$ is smooth.

\end{document}